\newtheorem{Th}{Theorem}[section]
\newtheorem{ThA}{Theorem}
\newtheorem{Cor}[Th]{Corollary}
\newtheorem{CorA}[ThA]{Corollary}
\newtheorem{Lem}[Th]{Lemma}
\newtheorem{Prop}[Th]{Proposition}
\newtheorem{Rem}[Th]{Remark}
\newcommand{\tlambda}{{\widetilde{\lambda}}}
\newcommand{\G}{\Gamma}
\newcommand{\A}{\mathbb{A}}
\newcommand{\C}{\mathbb{C}}
\newcommand{\N}{\mathbb{N}}
\newcommand{\R}{\mathbb{R}}
\newcommand{\W}{\mathbb{W}}
\DeclareMathOperator{\supp}{supp}
\DeclareMathOperator{\spann}{span}
\numberwithin{equation}{section}
\title[Fractional Bessel equation]
      {The fractional Bessel equation in H\"older spaces}
\author[J.J. Betancor]{Jorge J. Betancor}
\author[A.J. Castro]{Alejandro J. Castro}
\author[P.R. Stinga]{Pablo Ra\'ul Stinga}
\address{\newline
        Jorge J. Betancor, Alejandro J. Castro \newline
        Departamento de An\'alisis Matem\'atico,
        Universidad de la Laguna, \newline
        Campus de Anchieta, Avda. Astrof\'isico Francisco S\'anchez, s/n, \newline
        38271, La Laguna (Sta. Cruz de Tenerife), Spain}
\email{jbetanco@ull.es, ajcastro@ull.es}
\address{\newline
        Pablo Ra\'ul Stinga \newline
        Department of Mathematics, \newline
        The University of Texas at Austin, \newline
        1 University Station C1200, \newline
        78712-1202, Austin TX, United States of America}
\email{stinga@math.utexas.edu}
\keywords{Fractional Laplacian, radial solution, fractional Bessel
operator, H\"older and Schauder estimates, Campanato space, atomic
Hardy space}
\subjclass[2010]{Primary: 35R11, 35B65, 42B37. Secondary: 35C15, 30H10}
\thanks{Research partially supported by grants MTM2010/17974 and MTM2011-28149-C02-01
from Spanish Government. The second author is also supported by a FPU grant from Spanish Government.}
\begin{document}

  \begin{abstract}
    Motivated by the Poisson equation for the fractional Laplacian on the whole
    space with radial right hand side, we study global H\"older and Schauder estimates for a fractional Bessel equation.
    Our methods stand on the so-called semigroup language. Indeed,
    by using the solution to the Bessel heat equation we derive pointwise formulas for the fractional operators.
    Appropriate H\"older spaces, which can be seen as Campanato-type spaces,
    are characterized through Bessel harmonic extensions and fractional Carleson measures.
    From here the regularity estimates for the fractional Bessel equations follow.
    In particular, we obtain regularity estimates for radial solutions to the fractional Laplacian.
  \end{abstract}

  \maketitle

    \section{Introduction} \label{sec:intro}

    In this paper we analyze solutions to the fractional nonlocal Bessel equation
    \begin{equation}\label{Bessel eq}
    \Delta_\lambda^\sigma u=f,\quad\hbox{in}~\R_+=(0,\infty).
    \end{equation}
    {Here $\Delta_\lambda^\sigma$, $0<\sigma<1$, is the fractional power
    of the Bessel differential operator}
    $$\Delta_\lambda =-\frac{d^2}{dx^2}+\frac{\lambda
    (\lambda-1)}{x^2},\quad\lambda>0.$$
    The fractional Bessel operator $\Delta_\lambda^\sigma$ can be defined by using its spectral decomposition. Recall that
    \begin{equation}\label{spectral}
     \Delta_\lambda u=h_\lambda\big(x^2h_\lambda u\big),
    \end{equation}
    where $h_\lambda$ is the Hankel transform {and $h_\lambda^{-1}=h_\lambda$,
    see Section \ref{sec:positive}.}
    The Hankel transform on $\R_+$ plays for the Bessel operator $\Delta_\lambda$
    the same role as the Fourier transform on $\R^N$ for the Laplacian $-\Delta$.
Then, in a parallel way, we define
    \begin{equation}\label{def frac}
     \Delta_\lambda^\sigma u=h_\lambda\big(x^{2\sigma}h_\lambda u\big).
    \end{equation}
{We are interested in solutions to \eqref{Bessel eq} when $f$ belongs to the natural H\"older classes
    adapted to the problem.
    We say that a continuous function $f$ on $\R_+$ is in $C^\alpha_+$, $0<\alpha<1$,
    whenever
    the norm
    $$\|f\|_{C^\alpha_+}
        := \sup_{\substack{x,y \in \R_+\\ x \neq y}} \frac{|f(x)-f(y)|}{|x-y|^\alpha}
        + \sup_{x \in \R_+} x^{-\alpha}|f(x)|,$$
    is finite. When $\alpha=1$ we write $f\in\mathrm{Lip}_+$. It turns out that this is
    the appropriate H\"older space to look for solutions to \eqref{Bessel eq}. To establish our regularity result
we set $\widetilde{\lambda}:=\min\{\lambda,1\}$, and define
    \begin{equation}\label{Lrho}
    L_{\rho}:=\Big\{f:\R_+\to\R: \|f\|_{L_\rho}:=\int_0^\infty \frac{|f(x)|}{(1+x)^{1+2\rho}}\,dx<\infty\Big\},\quad\rho>0.
    \end{equation}}

\begin{ThA}[Schauder and H\"older estimates]\label{Th1.5}
        Let $\lambda>0$, $0<\sigma<1$ and $0<\alpha<1$.
        \begin{itemize}
            \item[$(a)$] If $\alpha+2\sigma<\widetilde{\lambda}$ and $f\in C^\alpha_+$, then
            $u=\Delta_\lambda^{-\sigma}f\in C^{\alpha+2\sigma}_+$ and
            $$\|u\|_{C^{\alpha+2\sigma}_+}\leq C\|f\|_{C^{\alpha}_+}.$$
            \item[$(b)$] If $0<\alpha-2\sigma<1$, $\lambda\geq1$ and $u\in C^\alpha_+\cap L_{\sigma}$, then
            $\Delta_\lambda^{\sigma}u\in C^{\alpha-2\sigma}_+$ and
            $$\|\Delta_\lambda^{\sigma}u\|_{C^{\alpha-2\sigma}_+}\leq C\|u\|_{C^{\alpha}_+}.$$
        \end{itemize}
    \end{ThA}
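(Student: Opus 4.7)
The approach I would adopt is the ``semigroup language'' announced in the abstract. First I would invoke the subordination identities
\begin{align*}
\Delta_\lambda^{-\sigma} f(x) &= \frac{1}{\Gamma(\sigma)}\int_0^\infty e^{-t\Delta_\lambda}f(x)\,t^{\sigma-1}\,dt,\\
\Delta_\lambda^{\sigma} u(x) &= \frac{1}{\Gamma(-\sigma)}\int_0^\infty \bigl(e^{-t\Delta_\lambda}u(x)-u(x)\bigr)\frac{dt}{t^{1+\sigma}},
\end{align*}
so that every estimate reduces to one about the Bessel heat kernel $W_t^\lambda(x,y)$, explicit in terms of modified Bessel functions via Section \ref{sec:positive}. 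Integrating in $t$ produces a pointwise kernel representation $\Delta_\lambda^{\pm\sigma}u(x)=\int_0^\infty K_{\pm\sigma}(x,y)[\,\cdot\,]\,dy$ (with a local correction for the positive power), and the whole game becomes deriving sharp upper bounds on $K_{\pm\sigma}(x,y)$ and $\partial_x K_{\pm\sigma}(x,y)$ from the known asymptotics of $W_t^\lambda$ in the regimes $x,y\gtrsim\sqrt t$ versus $x,y\ll\sqrt t$.

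For part $(a)$, with $u=\Delta_\lambda^{-\sigma}f$, I would control the increment $u(x)-u(x')$ by splitting the $y$-integration at scale $|x-x'|$: in the near region use $|K_\sigma(x,y)|\lesssim |x-y|^{2\sigma-1}$ together with $|f(y)|\le\|f\|_{C^\alpha_+}(|y-x|^\alpha+x^\alpha)$, and in the far region apply the mean value theorem with a gradient bound $|\partial_x K_\sigma(x,y)|\lesssim |x-y|^{2\sigma-2}$. The growth piece $\sup_x x^{-\alpha-2\sigma}|u(x)|$ falls out directly from $|f(y)|\lesssim y^\alpha$. The constraint $\alpha+2\sigma<\widetilde{\lambda}$ is exactly what is needed for the output regularity to be compatible with the intrinsic Bessel vanishing rate $x^\lambda$ at the origin on the one hand and the ceiling $1$ imposed by $C^\beta_+$ being defined only for $\beta<1$ on the other.

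For part $(b)$ I would split the $t$-integral of the semigroup formula at $t=|x-x'|^2$. On the small-$t$ piece the contraction $\|e^{-t\Delta_\lambda}\|_{\infty\to\infty}\le 1$---valid because $\lambda\ge 1$ renders the potential $\lambda(\lambda-1)/x^2$ nonnegative---combined with H\"older regularity of $u$ gives $|u(x)-e^{-t\Delta_\lambda}u(x)|\le Ct^{\alpha/2}\|u\|_{C^\alpha_+}$, while on the large-$t$ piece the weighted hypothesis $u\in L_\sigma$ forces convergence through the decay of $W_t^\lambda$. An analogous splitting applied at the single point $x$ gives the growth control $x^{-(\alpha-2\sigma)}|\Delta_\lambda^\sigma u(x)|$ that is built into the $C^{\alpha-2\sigma}_+$ norm.

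The main obstacle is the coexistence of interior H\"older control with boundary growth $x^{-\alpha}|f(x)|$: the kernel $W_t^\lambda$ has qualitatively different asymptotics in each of the regimes indicated above, so every kernel integral must be decomposed into several regions and the appropriate asymptotic formula applied in each. Tracking the sharp boundary exponent $\widetilde\lambda=\min\{\lambda,1\}$, and confirming that the estimates survive in the delicate transition region $x\sim\sqrt t$, is the real bookkeeping heart of the argument. A cleaner route---and the one hinted at in the abstract---is to first prove that $C^\alpha_+$ admits a Campanato-type characterization through the Bessel harmonic extension, and then read both $(a)$ and $(b)$ off almost mechanically from the equivalent norm.
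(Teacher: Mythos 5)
Your sketch develops primarily the direct pointwise-kernel route, whereas the paper deliberately takes the other path, the one you relegate to your final sentence. The paper's proof of Theorem \ref{Th1.5} runs entirely through characterization $(iii)$ of Theorem \ref{Th1.1}, namely $\|t^{\beta}\partial_t^\beta P_t^\lambda f\|_{L^\infty(\R_+)}\leq Ct^\alpha$, combined with the spectral formulas \eqref{spectral positive}--\eqref{spectral negative} written for the \emph{Poisson} semigroup $P_t^\lambda$ rather than the heat semigroup. For $(a)$ one first verifies the growth bound $|\Delta_\lambda^{-\sigma}f(x)|\leq Cx^{\alpha+2\sigma}$ by a direct kernel computation with \eqref{F0} and \eqref{F0''}, and then uses the semigroup identity $\partial_tP_t^\lambda(\Delta_\lambda^{-\sigma}f)(x)=\frac{1}{\Gamma(2\sigma)}\int_0^\infty\partial_tP_{t+s}^\lambda f(x)\,s^{2\sigma-1}\,ds$ together with $(iii)$ applied to $f$; part $(b)$ is analogous, splitting the $t$-integral of \eqref{spectral positive} at the relevant scale and differentiating once more in $t$. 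The hypothesis $\alpha+2\sigma<\tlambda$ is consumed exactly where Theorem \ref{Th1.1} is invoked at the exponent $\alpha+2\sigma$, since that characterization requires the exponent to lie below both $1$ and $\lambda$. So even on the ``clean'' route the growth part of the norm is not read off mechanically: a genuine kernel estimate at the boundary is still required.

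Two concrete gaps in the route you actually develop. First, in part $(b)$ the step ``contraction plus H\"older regularity gives $|u(x)-e^{-t\Delta_\lambda}u(x)|\leq Ct^{\alpha/2}\|u\|_{C^\alpha_+}$'' is false as stated: the Bessel semigroups are not conservative, $W_t^\lambda1\neq1$ and $P_t^\lambda1\neq1$ (the paper flags this as the main extra difficulty compared with the Laplacian), so $u(x)-W_t^\lambda u(x)$ contains the additional term $u(x)\big(1-W_t^\lambda1(x)\big)$, which is not controlled by the H\"older seminorm plus contractivity; it is precisely here that the weighted piece $\sup_x x^{-\alpha}|u(x)|$ of the $C^\alpha_+$ norm and a quantitative bound on $1-W_t^\lambda1(x)$ (the estimate underlying \eqref{20.1}, or Lemma \ref{Lem intPoiss} in the Poisson setting) must enter. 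Second, the kernel bounds you quote, $|K_\sigma(x,y)|\lesssim|x-y|^{2\sigma-1}$ and $|\partial_xK_\sigma(x,y)|\lesssim|x-y|^{2\sigma-2}$, are the translation-invariant bounds inherited from Gaussian domination \eqref{control clasico}; they carry no information about $\lambda$ and therefore can produce neither the restriction $\alpha+2\sigma<\tlambda$ nor the boundary growth $x^{\alpha+2\sigma}$. For those you need the $\lambda$-dependent estimates such as \eqref{F0''}, with the factor $(xy)^\lambda/(t+|x-y|)^{2(\lambda+1)}$, or the heat-kernel asymptotics \eqref{W origen}--\eqref{W infinito}; your appeal to ``the intrinsic Bessel vanishing rate'' is exactly where the work lives and is not carried out. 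The direct route could likely be completed, but only after these two points are repaired.
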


     Our motivation to study the problem \eqref{Bessel eq} is the Poisson equation
    for the fractional Laplacian
    \begin{equation}\label{fractional Laplacian}
     (-\Delta)^\sigma U=G,\quad\hbox{in}~\R^N,
    \end{equation}
    {with radial right hand side}
        $$G(X)=\phi(|X|),\quad X\in\R^N.$$
     {In the local case $\sigma=1$, a classical computation with polar coordinates shows that
     $U$ must be a radial function $U(X)=\psi(x)$, for $x=|X|$,
     verifying
     $$\Big(-\frac{d^2}{dx^2}-\frac{N-1}{x}\frac{d}{dx}\Big)\psi=\phi,\quad x\in\R_+.$$
     On the other hand, we can apply the Fourier transform to the equation. If
      $U$ is radial, then its Fourier transform is also a radial function, and
     \eqref{fractional Laplacian} with $\sigma=1$ becomes
     $$(\mathcal{H}_\lambda^{-1}\circ x^2\circ \mathcal{H}_\lambda) \psi=\phi,$$
     where $\mathcal{H}_\lambda =x^{-\lambda}\circ h_\lambda\circ x^\lambda$,
      and $2\lambda=N-1$, see \cite[p.~430]{Ste2}. Hence, the multiplier
      of the radial Laplacian with respect to the $\mathcal{H}_\lambda$--transform is $x^2$.
      Now, when $0<\sigma<1$, the same reasoning with the Fourier
      transform gives that \eqref{fractional Laplacian} is equivalent to
      $$(\mathcal{H}_\lambda^{-1}\circ x^{2\sigma}\circ \mathcal{H}_\lambda) \psi=\phi,$$
      and so, by definition, $\psi$ is a solution to}
     \begin{equation}\label{radial lambda}
     \Big(-\frac{d^2}{dx^2}-\frac{2\lambda}{x}\frac{d}{dx}\Big)^\sigma\psi=\phi,\quad x \in \R_+.
    \end{equation}
    The relation between the fractional equation \eqref{radial lambda} and our fractional
    Bessel equation \eqref{Bessel eq} is provided by the identity
    \begin{equation}\label{conjugacy}
    \Delta_\lambda^\sigma=x^\lambda\circ\Big(-\frac{d^2}{dx^2}-\frac{2\lambda}{x}\frac{d}{dx}\Big)^\sigma\circ x^{-\lambda},
    \end{equation}
    that can be easily checked with the Hankel transform $h_\lambda$. Hence,
    $$u(x):=x^\lambda\psi(x), \quad f(x):=x^\lambda\phi(x),$$
    satisfy \eqref{Bessel eq} with $2\lambda=N-1$. Conversely, the function $U(x)=\psi(|X|):=|X|^{-\lambda}u(|X|)$, where
    $u$ is a solution to \eqref{Bessel eq}, is a radial solution to the Poisson problem \eqref{fractional Laplacian}
    with $G(X)=|X|^{-\lambda}f(|X|)$.

    Therefore, the fractional nonlocal Bessel problem
    \eqref{Bessel eq} is a generalization to all $\lambda>0$ of the Poisson problem
    for the fractional Laplacian  {with radial right hand side}
    via the conjugacy with $x^{\pm\lambda}$ as in \eqref{conjugacy}.
 {Directly from Theorem~\ref{Th1.5} we obtain the following
    Schauder estimate for radial solutions to \eqref{fractional Laplacian}.}
 { \begin{CorA}[Radial solutions to the fractional Laplacian]\label{Thm:Radial}
     Let $U$ be a solution to \eqref{fractional Laplacian} with $G(X)=\phi(|X|)$ and
     $2\lambda=N-1$. If $x^\lambda\phi$ is in
     the space $C^{\alpha}_+$ defined above and $\alpha+2\sigma<\widetilde{\lambda}$,
     then $U(X)=\psi(|X|)$, for some function $\psi$ such that $x^\lambda\psi\in C^{\alpha+2\sigma}_+$, and
     $$\|x^\lambda\psi\|_{C^{\alpha+2\sigma}_+}\leq C\|x^\lambda\phi\|_{C^\alpha_+}.$$
    \end{CorA}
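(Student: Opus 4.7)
The plan is to directly reduce the corollary to Theorem~\ref{Th1.5}$(a)$ by passing from the radial problem on $\R^N$ to the fractional Bessel problem on $\R_+$ via the conjugation already singled out in \eqref{conjugacy}. Concretely, setting $x=|X|$, I would first record that the hypothesis ``$U$ is a solution'' together with the radial character of the right hand side $G(X)=\phi(|X|)$ forces $U$ to be radial as well (if necessary, by averaging $U$ over the orthogonal group, which commutes with $(-\Delta)^\sigma$); this lets us write $U(X)=\psi(x)$ unambiguously. The heuristic computation already contained in the introduction---based on the Fourier transform of radial functions and the identity $\mathcal{H}_\lambda=x^{-\lambda}\circ h_\lambda\circ x^\lambda$ with $2\lambda=N-1$---then tells us that $\psi$ must satisfy the radial fractional equation \eqref{radial lambda}.

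The second step is to conjugate by $x^\lambda$. Define
$$u(x):=x^\lambda\psi(x),\qquad f(x):=x^\lambda\phi(x),\qquad x\in\R_+.$$
Using \eqref{conjugacy}, equation \eqref{radial lambda} becomes exactly the fractional Bessel equation $\Delta_\lambda^\sigma u=f$ on $\R_+$, i.e.\ \eqref{Bessel eq}. Thus $u=\Delta_\lambda^{-\sigma}f$ (up to identifying the distinguished solution provided by the inverse Bessel potential). The hypothesis $x^\lambda\phi\in C^\alpha_+$ is precisely $f\in C^\alpha_+$, and the condition $\alpha+2\sigma<\widetilde\lambda$ is exactly the assumption required by part $(a)$ of Theorem~\ref{Th1.5}.

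Applying Theorem~\ref{Th1.5}$(a)$ immediately yields $u\in C^{\alpha+2\sigma}_+$ together with
$$\|u\|_{C^{\alpha+2\sigma}_+}\leq C\|f\|_{C^\alpha_+},$$
which, upon unwinding the definitions of $u$ and $f$, is the announced Schauder estimate
$$\|x^\lambda\psi\|_{C^{\alpha+2\sigma}_+}\leq C\|x^\lambda\phi\|_{C^\alpha_+}.$$

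The principal obstacle is not the algebraic manipulation but the justification that the formal Fourier-type argument sketched in the introduction is rigorous in the present H\"older/Campanato setting. One needs to verify that the conjugation \eqref{conjugacy} holds on the functions under consideration---for instance, that $x^{-\lambda}u(x)=\psi(x)$ lies in the domain where the radial fractional Laplacian can be computed and that it agrees with the distributional $(-\Delta)^\sigma$ acting on the radial extension $U$. Once the identification of $u=\Delta_\lambda^{-\sigma}f$ with $x^\lambda\psi$ is established (either through approximation by Schwartz functions and passing to the limit, or through uniqueness of solutions to \eqref{fractional Laplacian} in an appropriate class), the corollary is an immediate transcription of Theorem~\ref{Th1.5}$(a)$.
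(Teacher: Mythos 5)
Your proposal follows exactly the route the paper intends: the corollary is stated as following ``directly'' from Theorem~\ref{Th1.5}$(a)$ via the conjugation $u=x^\lambda\psi$, $f=x^\lambda\phi$ and the identity \eqref{conjugacy}, which is precisely your argument. Your closing caveat about rigorously justifying the passage from the radial $(-\Delta)^\sigma$ to $\Delta_\lambda^\sigma$ on H\"older-class functions is a fair observation, but the paper itself leaves that identification at the same formal level, so your write-up is faithful to (indeed slightly more careful than) the paper's own treatment.
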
}

    We point out that the fractional Laplacian
    on radial functions was already studied in \cite{Ferrari-Verbitsky}.  {There} a pointwise
    formula was obtained. The computation in \cite{Ferrari-Verbitsky} is based on applying polar coordinates to the formula
    $$(-\Delta)^\sigma U(X)=c_{N,\sigma} \operatorname{PV}
    \int_{\R^N}\frac{U(X)-U(Y)}{|X-Y|^{N+2\sigma}}\,dY.$$
    {In view of the underlying relation between \eqref{Bessel eq} and the radial solutions to \eqref{fractional Laplacian},
     a natural question is to get a pointwise formula for
    $\Delta_\lambda^\sigma u(x)$.}
    By our previous remarks, the pointwise formulas in \cite{Ferrari-Verbitsky} apply
    to the fractional operator in \eqref{radial lambda} for the
    case $2\lambda=N-1$. Thus, we could arrive to a pointwise formula for
     {our operator} $\Delta_\lambda^\sigma$
    passing through the conjugacy identity \eqref{conjugacy}. Nevertheless, 
     {here we obtain a pointwise formula for $\Delta_\lambda^\sigma u(x)$ for
    all values of the parameter $\lambda>0$. Our method relies on
    the so-called semigroup language approach. We
    believe it is interesting in its own right and can be applied to any operator having a heat kernel.}
    When considering the heat equation on $\R^N$ with radial data and the conjugacy with $x^{\pm\lambda}$
    as in \eqref{conjugacy} we are led to the Bessel heat equation
    \begin{equation}\label{diffusion}
    \begin{cases}
    w_{t}+\Delta_\lambda w=0, &\hbox{for}~x \in \R_+,t>0;\\
    w(x,0)=f(x),&\hbox{for}~x \in \R_+.
    \end{cases}
    \end{equation}
    It is well-known that we can write
    \begin{equation}\label{I1}
        w(x,t)\equiv W_t^\lambda f(x)
            = \int_0^\infty W_t^\lambda(x,y) f(y)\, dy,
    \end{equation}
    where the Bessel heat kernel $W_t^\lambda(x,y)$ is given explicitly in terms of a Bessel function, see \eqref{heat kernel} below.
    By using appropriate estimates for this kernel and the spectral formula
    \begin{equation}\label{3.2}
        \Delta_\lambda^\sigma u (x)
            = \frac{1}{\G(-\sigma)} \int_0^\infty \big(W_t^\lambda u(x)-u(x)\big) \frac{dt}{t^{1+\sigma}}, \quad x \in\R_+,
    \end{equation}
    we are able to prove that, for a large class of functions $u$, we have the pointwise
    expression
     $$\Delta_\lambda^\sigma u (x)
            =\operatorname{PV}\int_0^\infty \big(u(x) - u(y)\big) K_\sigma^\lambda(x,y) \, dy
                + u(x)B_\sigma^\lambda(x),\quad 0<\sigma<1.$$
    The kernel $K_\sigma^\lambda(x,y)$ and the function $B_\sigma^\lambda(x)$
    are defined with the Bessel heat kernel,
    so estimates for them can be deduced, see Theorem \ref{Thm:Puntual} below.

    {It is well known that the fractional Laplacian on $\R^N$
    can be characterized by the Caffarelli--Silvestre extension problem, see \cite{Caffarelli-Silvestre}.
    We would like to remark here that the fractional Bessel operator $\Delta_\lambda^\sigma$
    satisfies the same type of characterization.} Indeed, this is a
    direct corollary of the extension result for nonnegative selfadjoint operators of \cite{Stinga-Torrea}. Let $f$ be in the domain
    of $\Delta_\lambda^\sigma$. A solution to the extension problem
    $$\begin{cases}
    u(x,0)=f(x),&\hbox{on}~\R_+,\\
    -\Delta_\lambda u+\frac{1-2\sigma}{y}u_y+u_{yy}=0,&\hbox{in}~\R_+\times(0,\infty),
    \end{cases}$$
    is given by the Poisson formula
    $$u(x,y)=\frac{y^{2\sigma}}{4^\sigma\Gamma(\sigma)}\int_0^\infty e^{-y^2/(4t)}W_t^\lambda f(x)\,\frac{dt}{t^{1+\sigma}},$$
    and, for an explicit positive constant $c_\sigma$, we have
    $$-c_\sigma\lim_{y\to0^+}y^{1-2\sigma}u_y(x,y)=\Delta_\lambda^\sigma f(x),\quad x\in\R_+.$$
    Therefore, the fractional Bessel operator, which is a nonlocal operator, can be seen as
    the Dirichlet-to-Neumann map for a local degenerate elliptic equation in one more variable.
    Furthermore, the extension $u$ is obtained by integrating in time the Bessel heat extension of $f$.
    There are other equivalent expressions of $u$ that involve the solution of the Bessel wave equation,
    see \cite[Theorem~1.3]{Gale-Miana-Stinga}.

    Using the extension problem and the ideas of Caffarelli and Silvestre in \cite{Caffarelli-Silvestre},
    the interior Harnack's inequality for $\Delta_\lambda^\sigma$ can be proved.
    Let $f$ be a solution to $\Delta_\lambda^\sigma f=0$ in some open interval $I\subset\R_+$, with $f\geq0$ on $\R_+$.
    Then for each open bounded subinterval $I'$ compactly contained in $I$ there exists a constant $C$ depending on
    $I'$, $I$ and $\sigma$, but not on $f$, such that
    $$\sup_{I'}f\leq C\inf_{I'}f.$$
    Moreover, $f$ is H\"older continuous in $I'$. For the proof of this result see \cite[Theorem~A]{Stinga-Zhang}.
     {These extension techniques can also be applied in different
    contexts, for instance, to prove Harnack's inequality for fractional sub-Laplacians in
    Carnot groups, see \cite{Ferrari-Franchi}.}

{To close this introduction, let us explain how we are going to prove Theorem \ref{Th1.5}.
There are at least two ways of proceeding.} The first one is to use the pointwise formulas for the operators
    $\Delta_\lambda^{\pm \sigma}$ and estimate the corresponding integrals. This is the method used in \cite{SilPhD, Sil}
    for the fractional Laplacian and in \cite{STo} for the fractional harmonic oscillator.
    The second one is to use a characterization of the H\"older spaces  {$C^\alpha_+$}
    that permits to use the spectral formulas for the operators $\Delta_\lambda^{\pm \sigma}$ in a direct way.
    The characterization involves the Poisson semigroup as in \cite[Chapter~5]{Stein}. This
    idea was applied in
    \cite{MSTZ1} for fractional powers of Schr\"odinger operators $-\Delta+V$, with $V\geq0$ in the
    reverse H\"older class, and in \cite{Roncal-Stinga} for the fractional Laplacian on the torus.
    In this paper we will apply the second method.

    {The starting point is to study
    the Bessel harmonic extension problem in $C^\alpha_+$, namely, solutions to
    \begin{equation}\label{Bessel harmonic extension}
    \begin{cases}
    v_{tt}-\Delta_\lambda v=0, &\hbox{for}~x \in \R_+,t>0,\\
    v(x,0)=f(x),&\hbox{for}~x \in \R_+,
    \end{cases}
    \end{equation}
    when $f\in C^\alpha_+$.
    It is well known that $v=P_t^\lambda f$ is given by
    the Poisson semigroup associated to $\Delta_\lambda$. In fact, when $f\in L^p(\R_+)$,
    the function $v$
    can be written through the classical
     \textit{Bochner subordination formula}, see \cite[pp.~46--47]{Stein Topics}.
    In Section \ref{sec:Poisson} we analyze \eqref{Bessel harmonic extension}
    for $f$ in $C^\alpha_+$ and
    we also give necessary estimates for the Poisson kernel.}

{The second step consists in describing the spaces $C^\alpha_+$
in terms of $P_t^\lambda$. In Theorem \ref{Th1.1} we show, among other
characterizations, that a function $f$ is in $C^\alpha_+$, $0<\alpha<1$, if and only if
\begin{equation}\label{cara}
\|t^\beta\partial_t^\beta P_t^\lambda f\|_{L^\infty(\R_+)}\leq Ct^\alpha,\quad t>0,
\end{equation}
for some constant $C$ and all $\beta$ such that $\min\{\beta,\lambda\}>\alpha$.
Here $\partial_t^\beta$, $\beta>0$, is the fractional derivative
as defined by C. Segovia and R. L. Wheeden
in \cite{SW}, see \eqref{fractional derivative} below. Notice that $\partial_t^\beta$
coincides with the usual derivative when $\beta$ is a positive integer.}

{Finally, Theorem A will be proved by using \eqref{cara} and the identities
    \begin{equation}\label{spectral positive}
     \Delta_\lambda^{\sigma}u(x)
        = \frac{1}{\G(-2\sigma)} \int_0^\infty  \big(P_t^\lambda u(x)-u(x)\big) \frac{dt}{t^{1+2\sigma}},\quad
        0<\sigma<1/2,
    \end{equation}
    and
    \begin{equation}\label{spectral negative}
     \Delta_\lambda^{-\sigma}f(x)
        = \frac{1}{\G(2\sigma)} \int_0^\infty  P_t^\lambda f(x)\, \frac{dt}{t^{1-2\sigma}},\quad
        \sigma>0,
    \end{equation}
    see Section \ref{sec:Poisson and fractional}.}

    All the results and methods used in this paper open the way to more problems in PDEs and Harmonic
    and Functional Analysis.
    A particularly interesting problem is to show the boundary Harnack's inequality for $\Delta_\lambda^\sigma$
    by using the ideas in \cite{Caffarelli-Silvestre} (see \cite{Roncal-Stinga} for the case of the fractional
    Laplacian on the torus). Since the heat kernel of the Bessel operator in \eqref{radial lambda} is also
    available, the semigroup approach we use here could also be applied to obtain a generalization to
    all $\lambda>0$ of the pointwise formula of \cite{Ferrari-Verbitsky}.
    As for questions arising from Theorem \ref{Th1.1},
    notice that in Section \ref{sec:Hardy} we need to introduce an atomic Hardy space $H^p(\R_+)$ associated to
    the Bessel operator whose dual is $C^\alpha_+$. An open question is the characterization of
    this new $H^p(\R_+)$ space by using Bessel--heat or Bessel--Poisson maximal operators, or Bessel--Riesz transforms.

    The paper is organized as follows. In Section \ref{sec:positive} we give the pointwise integro-differential
    formula for $\Delta_\lambda^{\sigma}u(x)$ and,  {as a corollary}, we study the pointwise limits
    of $\Delta_\lambda^\sigma u(x)$ as $\sigma\to0^+$ and $\sigma\to1^-$.
    Section \ref{sec:Poisson} is devoted to the analysis of the Bessel harmonic extension of functions
    in $C^\alpha_+$.
    The atomic Hardy spaces are considered in Section~\ref{sec:Hardy}, where it is shown that
    their duals are the $C_+^\alpha$ spaces in its Campanato-type form.
     {The proof of the characterizations of $C^\alpha_+$
    is} given in Section \ref{sec:core},
     {while that of Theorem \ref{Th1.5} is presented in Section \ref{sec:Poisson and fractional}.}
    The last section of this paper is devoted to collect the proofs of some technical results
    left open in the previous sections.

    Throughout this paper by $C$ and $c$ we always denote positive constants that may change in each occurrence.
    Without mentioning it, we will repeatedly apply the inequality $x^\eta e^{-x}\leq C_\eta e^{-x/2}$, $\eta\geq0$,
    $x \in \R_+$.

    \section{Pointwise formula for the fractional Bessel operator} \label{sec:positive}

    As we pointed out in the introduction,
the Hankel transform given in \eqref{Hankel} below plays for the
    Bessel operator $\Delta_\lambda$ on $\R_+$
 the same role as the Fourier transform for the
    Laplacian $-\Delta$ on $\R^N$.  For a function $u\in L^1(\R_+)$, the Hankel
transform $h_\lambda$ is defined by
\begin{equation}\label{Hankel}
    h_\lambda u(x)= \int_0^\infty \sqrt{xy} J_{\lambda-1/2}(xy) u(y)\,dy, \quad x \in \R_+.
    \end{equation}
    Here $J_\nu$ is the usual Bessel function. It is known that $h_\lambda$
    can be extended as an isometry on $L^2(\R_+)$
 with $h^{-1}_\lambda=h_\lambda$, see \cite{Ze} for this and more properties.
We need the parallel of the Schwartz space of test functions $\mathcal{S}$.
    The natural space of test functions for $h_\lambda$ is
    the set $\mathcal{S}_\lambda$, which consists of all those smooth functions $\phi$ on $\R_+$ such that,
    for every $m,k \in \N_0$,
    $$\gamma_{m,k}(\phi)
        =\sup_{x \in \R_+} (1+x)^m \Bigg| \left( \frac{1}{x} \frac{d}{dx}\right)^k \left( x^{-\lambda}\phi(x)\right)\Bigg| < \infty.$$
    Observe that we ask for the function $x^{-\lambda}\phi(x)$ to have all its ``derivatives''
    $(\frac{1}{x}\frac{d}{dx})^k(x^{-\lambda}\phi)$ to be rapidly decreasing at
    infinity. A typical example of function in the space $\mathcal{S}_\lambda$ is $\phi(x)=x^\lambda e^{-x^2}$,
    $x \in\R_+$.

      \begin{Rem}\label{rem:test acotadas}
    {\rm It follows through an induction argument that all the usual derivatives of $\phi\in \mathcal{S}_\lambda$
    have rapid decay at infinity, that is, $x^m\phi^{(k)}(x)\to0$ as $x\to\infty$ for every $m,k\in\mathbb{N}$.
    Functions in $\mathcal{S}_\lambda$ are always bounded since
    $$|\phi(x)|=x^\lambda|x^{-\lambda}\phi(x)|\leq
     \gamma_{m,0}(\phi),\quad \hbox{as soon as}~m\geq\lambda.$$
    Nevertheless, their derivatives $\phi^{(k)}$ may not be
    bounded at $x=0$ for every $\lambda>0$. By computing
    $(\frac{1}{x}\frac{d}{dx})(x^{-\lambda}\phi(x))$ we readily see that
    $$|\phi'(x)|\leq \lambda x^{\lambda-1}|x^{-\lambda}\phi(x)|+x^{\lambda+1}\left|\left(\frac{1}{x}
    \frac{d}{dx}\right)(x^{-\lambda}\phi(x))\right|\leq C\big(\gamma_{m,0}(\phi)+\gamma_{m,1}(\phi)\big),$$
    as soon as $\lambda\geq1$ and we choose $m\geq\lambda+1$.
    Developing $(\frac{1}{x}\frac{d}{dx})^2(x^{-\lambda}\phi(x))$ we also get
    \begin{align*}
    |\phi''(x)| &\leq \lambda(\lambda+2)x^{\lambda-2}|x^{-\lambda}\phi(x)|
    +(2\lambda+1)x^\lambda\left|\left(\frac{1}{x}
    \frac{d}{dx}\right)(x^{-\lambda}\phi(x))\right|
+x^{\lambda+2} \Bigg|\left(\frac{1}{x}
    \frac{d}{dx}\right)^2(x^{-\lambda}\phi(x))\Bigg|\\
    &\leq C\big(\gamma_{m,0}(\phi)+\gamma_{m,1}(\phi)+\gamma_{m,2}(\phi)\big),
    \end{align*}
    whenever $\lambda\geq2$ and $m\geq\lambda+2$.}
    \end{Rem}

    If $\mathcal{S}_\lambda$ is endowed with the topology induced by
    the set of seminorms $\{\gamma_{m,k}\}_{m,k \in \mathbb{N}_0}$ above, then it becomes a Fr\'echet space and the Hankel transform $h_\lambda$ of \eqref{Hankel}
    is an automorphism of $\mathcal{S}_\lambda$, see \cite[Lemma 8]{Ze}. Moreover, $\mathcal{S}_\lambda$
    is dense in $L^p(\R_+)$, $1\leq p<\infty$, because it contains $C_c^\infty(\R_+)$, the space of smooth functions with compact support on $\R_+$.
    The dual space of $\mathcal{S}_\lambda$ is denoted by $\mathcal{S}_\lambda'$ and the Hankel transformation is defined
    on it by transposition.

    For every test function $\phi\in\mathcal{S}_\lambda$, $\Delta_\lambda\phi=h_\lambda\left( x^2 h_\lambda \phi\right)$,
    see \cite[(12)]{Ze}.
    By the isometry property of $h_\lambda$ on $L^2(\R_+)$,
    we can extend the definition of $\Delta_\lambda$
    as a positive and selfadjoint operator via \eqref{spectral}, that we still denote by $\Delta_\lambda$,
    in the domain
    $D(\Delta_\lambda)=\{u\in L^2(\R_+) : x^2 h_\lambda u\in L^2(\R_+) \} \subset L^2(\mathbb{R}_+)$.
    Moreover $C_c^\infty(\R_+) \subset D(\Delta_\lambda)$.

    The Bessel heat-diffusion equation \eqref{diffusion} can then be solved with the Hankel transform:
    \begin{equation}\label{defcalor}
        w(x,t)\equiv W_t^\lambda f(x)
            = h_\lambda\left( e^{-ty^2} h_\lambda f(y)\right)(x), \quad f \in L^2(\R_+).
    \end{equation}
    It is known that the semigroup $W_t^\lambda$ is an integral operator as in \eqref{I1}.
    The Bessel heat kernel is
    \begin{equation}\label{heat kernel}
     W_t^\lambda(x,y)
        = \frac{\sqrt{xy}}{2t} I_{\lambda-1/2}\left( \frac{xy}{2t} \right) e^{-(x^2+y^2)/(4t)},
        \quad  x,y\in\R_+,~t>0,
    \end{equation}
    where $I_\nu$ is the modified Bessel function of the first kind and order $\nu$, see \cite[\S13.31(1)]{Wat}.
    Using this kernel it can be seen that \eqref{I1} is the classical solution of the equation in \eqref{diffusion}, whenever $f$ in $L^p(\R_+)$, $1\leq p\leq\infty$. Moreover, $\big\|\sup_{t>0}|W_t^\lambda f|\big\|_{L^p(\R_+)}\leq C\|f\|_{L^p(\R_+)}$, for $1<p<\infty$,
    see \cite[Theorem 2.1 and Remark 3.2]{BHNV}.  We set, for $x\in\R_+$, $t>0$,
$$ W_t^\lambda1(x)=\int_0^\infty W_t^\lambda(x,y)\,dy.$$

    Next, we define
    $$\Delta_\lambda^\sigma \phi(x)= h_\lambda \left( y^{2\sigma}(h_\lambda\phi)(y)\right)(x),\quad \phi\in \mathcal{S}_\lambda,~0<\sigma<1.$$
    Note that if $\phi \in \mathcal{S}_\lambda$ then $\Delta_\lambda^\sigma \phi \in C^\infty(\R_+)$ but not necessarily
    $\Delta_\lambda^\sigma \phi \in \mathcal{S}_\lambda$ because we cannot guarantee that $\Delta_\lambda^\sigma \phi$ decays
    rapidly at infinity. Using the identity
    $$y^{2\sigma}
        = \frac{1}{\G(-\sigma)} \int_0^\infty \big( e^{-ty^2}-1 \big) \,\frac{dt}{t^{1+\sigma}}, \quad y \in \R_+,$$
    in the definition above, Fubini's theorem (note that the function $\sqrt{z}J_\nu(z)$ is
    bounded on $\R_+$ for every $\nu>-1/2$)
    and \eqref{defcalor} we see that \eqref{3.2} is valid
    by replacing $u$ by
    every $\phi \in \mathcal{S}_\lambda$ and $x \in \R_+$.

    The idea to obtain the pointwise formula for $\Delta_\lambda^\sigma \phi (x)$ is to
    write down the heat kernel in \eqref{3.2} and to apply Fubini's theorem. To pass to more general
    H\"older continuous functions $u$ we will use an approximation argument.

    \begin{Th}\label{Thm:Puntual}
     Let $\lambda >0$, $0<\sigma<1$ and $u\in L_\sigma$. Define the kernel
     $$K_\sigma^\lambda(x,y)
            = \frac{1}{-\G(-\sigma)} \int_0^\infty W_t^\lambda(x,y)\, \frac{dt}{t^{1+\sigma}}, \quad x,y\in\R_+, $$
        and the function
      $$B_\sigma^\lambda(x)
            = \frac{1}{\G(-\sigma)} \int_0^\infty \big(W_t^\lambda1(x) - 1 \big)\,\frac{dt}{t^{1+\sigma}},\quad x\in\R_+.$$
    Then both integrals above are absolutely convergent,
    \begin{equation}\label{estimacion nucleo}
     0\leq K_\sigma^\lambda(x,y) \leq \frac{C}{|x-y|^{1+2\sigma}},\quad x\neq y,
    \end{equation}
    and
     \begin{equation}\label{20.1}
        |B_\sigma^\lambda(x)|\leq\frac{C}{x^{2\sigma}}.
        \end{equation}
     Moreover, the following assertions are true.
     \begin{enumerate}[$(1)$]
      \item Let $0<\sigma<1/2$ and $\lambda\geq1$. If $u\in C_+^\alpha$, with $0<2\sigma<\alpha < 1$, or $u\in \mathrm{Lip}_+$ when $\alpha=1$, then
      $\Delta_\lambda^\sigma u$ is a continuous function on $\R_+$ given by
       \begin{equation}\label{10.1}
        \Delta_\lambda^\sigma u(x)
            =\int_0^\infty \big(u(x) - u(y)\big)K_\sigma^\lambda(x,y)\,dy
                + u(x)B_\sigma^\lambda(x), \quad x\in\R_+,
        \end{equation}
        where the integral is absolutely convergent.
      \item Let $1/2\leq\sigma<1$ and $\lambda\geq2$. If $u$ is continuous on $[0,\infty)$ and it has a derivative $u'\in C_+^\alpha$,  with $0\leq2\sigma-1<\alpha < 1$, or $u'\in \mathrm{Lip}_+$ when $\alpha=1$,
      then $\Delta_\lambda^\sigma u$ is a continuous function on $\R_+$ given by
        \begin{equation}\label{19.1}
            \Delta_\lambda^\sigma u(x)
                =\lim_{\varepsilon \to 0^+}\int_{0,\,|x-y|>\varepsilon}^\infty \big(u(x) - u(y)\big)K_\sigma^\lambda(x,y)\,dy
                + u(x)B_\sigma^\lambda(x), \quad x\in\R_+.
        \end{equation}
     \end{enumerate}
    \end{Th}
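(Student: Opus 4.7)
The plan is to rewrite the spectral formula \eqref{3.2} using the explicit Bessel heat kernel \eqref{heat kernel} and to interchange the order of integration; the argument splits into four stages.

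First I would establish the estimates on $K_\sigma^\lambda$ and $B_\sigma^\lambda$. Applying the asymptotics $I_\nu(z)\sim z^\nu$ as $z\to0^+$ and $I_\nu(z)\sim e^z/\sqrt{2\pi z}$ as $z\to\infty$ to \eqref{heat kernel} yields the uniform Gaussian bound $W_t^\lambda(x,y)\leq Ct^{-1/2}e^{-c(x-y)^2/t}$; substituting this into the definition of $K_\sigma^\lambda$ and changing variables $s=(x-y)^2/t$ gives \eqref{estimacion nucleo}. For \eqref{20.1} I would split the $t$-integral at $t=x^2$: on the tail use $|W_t^\lambda 1(x)-1|\leq 2$, and on the near-zero part use the short-time oscillation bound $|W_t^\lambda 1(x)-1|\leq Ct/x^2$ obtained from the $1/x^2$ scaling of $\Delta_\lambda$.

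Next, for $\phi\in\mathcal{S}_\lambda$ formula \eqref{3.2} already holds pointwise. The key algebraic step is to decompose
\[
W_t^\lambda\phi(x)-\phi(x)=\int_0^\infty W_t^\lambda(x,y)(\phi(y)-\phi(x))\,dy+\phi(x)(W_t^\lambda 1(x)-1)
\]
and apply Fubini in $(t,y)$. Absolute integrability of the resulting double integral is checked by splitting $\{|y-x|<1\}$ versus $\{|y-x|\geq1\}$, using $|\phi(y)-\phi(x)|\leq C\min\{1,|x-y|\}$ when $\sigma<1/2$, and subtracting the Taylor linear term $\phi'(x)(y-x)$ together with $|\phi(y)-\phi(x)-\phi'(x)(y-x)|\leq C|y-x|^2$ when $\sigma\geq1/2$; Remark~\ref{rem:test acotadas} guarantees that $\phi'$ and $\phi''$ are bounded for $\lambda\geq 2$. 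This yields the pointwise formulas for test functions.

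To reach arbitrary $u\in C^\alpha_+\cap L_\sigma$ I would approximate $u$ by $\phi_n\in\mathcal{S}_\lambda$ with $\phi_n\to u$ locally uniformly and $\phi_n$ uniformly bounded in $C^\alpha_+\cap L_\sigma$ (multiply $u$ by a smooth cutoff on $[1/n,n]$, conjugate by $x^{\lambda}$, mollify on $\R$, then conjugate back). The formulas for $\phi_n$ pass to the limit by dominated convergence: near $y=x$ use $|u(x)-u(y)|\leq C|x-y|^\alpha$ with $\alpha>2\sigma$ in case (1), or the first-order Taylor estimate with $u'\in C^\alpha_+$ in case (2); away from the diagonal use $u\in L_\sigma$ against the $|y-x|^{-1-2\sigma}$ decay from \eqref{estimacion nucleo}. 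In case (2) the Taylor remainder $r(x,y)=u(y)-u(x)-u'(x)(y-x)$ gives an absolutely convergent contribution since $|r(x,y)|K_\sigma^\lambda(x,y)\leq C|y-x|^{\alpha-2\sigma}$ with $\alpha-2\sigma>-1$; for the linear term I would split $K_\sigma^\lambda(x,y)=c_\sigma|x-y|^{-1-2\sigma}+R(x,y)$ with $|R(x,y)|\leq C|x-y|^{-2\sigma}$ near the diagonal, so the symmetric principal part contributes zero on $\{\epsilon<|y-x|<1\}$ by oddness while $R$ is absolutely integrable, yielding the principal value in \eqref{19.1}.

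The main obstacle will be this last step: extracting the Euclidean leading term of $K_\sigma^\lambda$ and controlling the remainder $R(x,y)$ uniformly for all $x>0$, in particular near the singular endpoint $x=0$ where the Bessel and Euclidean fractional kernels differ most. Arranging the mollification in the approximation step so as to preserve both the H\"older norm and the $L_\sigma$ growth uniformly in $n$ is also a delicate point.
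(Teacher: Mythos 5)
Your overall architecture (kernel estimates, pointwise formula for test functions via Fubini, then approximation) is the same as the paper's, and stages one and two are sound; the paper organizes the bound \eqref{20.1} by writing $B_\sigma^\lambda=A_1-A_2$ with $A_1$ controlled through the expansion \eqref{25.1} and $A_2$ through $\W_t(x+y)$, which is essentially your split at $t=x^2$ in a different order. The genuine gap is in your limiting step. Dominated convergence lets you pass to the limit in the \emph{right-hand side} of \eqref{10.1}--\eqref{19.1}, but it does not identify the limit of the \emph{left-hand side} with $\Delta_\lambda^\sigma u$: for $u\in C^\alpha_+\cap L_\sigma$ the operator is only defined distributionally, and you never say in what sense, nor why $\Delta_\lambda^\sigma\phi_n\to\Delta_\lambda^\sigma u$. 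The paper resolves this by proving that $\Delta_\lambda^\sigma$ maps $\mathcal{S}_\lambda$ boundedly into the weighted space $\mathfrak{S}_\sigma$ of continuous functions with $\sup_x(1+x)^{1+2\sigma}|\varphi(x)|<\infty$ (Lemma~\ref{Lem SS}), defining $\Delta_\lambda^\sigma$ on $\mathfrak{S}_\sigma'\supset L_\sigma$ by transposition, and then using $\phi_n\to u$ in $L_\sigma$ to get convergence in $\mathcal{S}_\lambda'$, after which uniqueness of distributional limits closes the argument. This is precisely where the hypotheses $\lambda\geq1$ (resp. $\lambda\geq2$) enter, through the boundedness of $\phi'$ (resp. $\phi''$) for $\phi\in\mathcal{S}_\lambda$ in Remark~\ref{rem:test acotadas}; your proposal never invokes these hypotheses, which signals that this step is missing rather than merely implicit.

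A secondary inaccuracy: the remainder in your decomposition $K_\sigma^\lambda(x,y)=c_\sigma|x-y|^{-1-2\sigma}+R(x,y)$ does \emph{not} satisfy $|R(x,y)|\leq C|x-y|^{-2\sigma}$ uniformly in $x$. By \eqref{25.1} the leading correction is of order $|x-y|^{1-2\sigma}/(xy)$, which degenerates as $x\to0$; the paper's estimate \eqref{csigma} for the resulting function $C_\sigma^\lambda(x)$ is $Cx^{1-2\sigma}$ for $\sigma>1/2$ and $C(1+\chi_{(0,1)}(x)|\ln x|)$ for $\sigma=1/2$, not a constant. Since the principal value in \eqref{19.1} only needs to exist for each fixed $x>0$, this is repairable (and you correctly flag the behaviour near $x=0$ as the obstacle), but the uniform bound as stated is false.
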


    To prove this result we need some properties of the Bessel function $I_\nu$,
    $\nu>-1$, that appears in the Bessel heat kernel. See \cite[pp. 108, 123, 110]{Leb} for the following:
    \begin{equation}\label{I origen}
        z^{-\nu}I_\nu(z) \sim \frac{1}{2^\nu \G(\nu+1)}, \quad \text{ as } z \to 0^+,
    \end{equation}
    \begin{equation}\label{I infinito}
        \sqrt{z}I_\nu(z) = \frac{e^z}{\sqrt{2\pi}} \left( 1 + \Psi_\nu(z) \right), \quad z \in \R_+,
    \end{equation}
    being $\Psi_\nu(z)= -(4\nu^2-1)/(8z) + \mathcal{O}(1/z^2) = \mathcal{O}(1/z)$, as $z \to \infty$, and
    \begin{equation}\label{I derivada}
        \frac{d}{dz}\left(  z^{-\nu}I_\nu(z) \right)
            =  z^{-\nu}I_{\nu+1}(z), \quad z \in \R_+.
    \end{equation}

  By taking into account \eqref{I origen} and \eqref{I infinito} we obtain that for $x,y\in\R_+$, $t>0$,
 \begin{equation}\label{W origen}
W_t^\lambda(x,y) \sim \frac{(xy)^\lambda}{t^{\lambda+1/2}} e^{-(x^2+y^2)/(4t)}, \quad\hbox{when}~\dfrac{xy}{2t}<1,
\end{equation}
\begin{equation} \label{W infinito}
W_t^\lambda(x,y) = \frac{e^{-|x-y|^2/(4t)}}{\sqrt{4 \pi t}}
\left(1+\mathcal{O}\Big(\frac{t}{xy}\Big)\right),\quad\hbox{when}~ \dfrac{xy}{2t}\geq 1.
\end{equation}
From here we readily deduce that
    \begin{equation}\label{control clasico}
        W_t^\lambda(x,y) \leq C \W_t(x-y), \quad x,y\in\R_+,~t>0,
    \end{equation}
where $\W_t(x)=(4\pi t)^{-1/2}e^{-|x|^2/(4t)}$, $x\in\R$, $t>0$,  is the Gauss--Weierstrass
kernel for the classical heat semigroup on the real line $\mathbb{W}_tf(x)$, $f\in L^2(\R)$. From \eqref{I infinito} we deduce that,
        \begin{equation}\label{25.1}
            W_t^\lambda(x,y) - \W_t(x-y) =  \W_t(x-y) \left[ - \lambda (\lambda-1) \frac{t}{xy} +
                \mathcal{O}\left( \Big(\frac{t}{xy}\Big)^2 \right)\right], \quad x,y \in \R_+, \ t>0.
        \end{equation}
Notice that for $x\in\R$ and $t>0$, $\displaystyle\W_t1(x)=\int_\R\W_t(x-y)\,dy =1$.

    Let us begin with the following

    \begin{Lem}\label{Lem:funcion}
     Estimates \eqref{estimacion nucleo} and \eqref{20.1} hold. Moreover, $B_\sigma^\lambda(x)$ is a continuous function on $\R_+$.
    \end{Lem}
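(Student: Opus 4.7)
The plan is to handle the three assertions separately, with most of the work concentrated on the bound for $B_\sigma^\lambda$.

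For \eqref{estimacion nucleo}, nonnegativity is immediate from $W_t^\lambda(x,y)\geq 0$ and $-\Gamma(-\sigma)>0$ on $(0,1)$. For the upper bound I would plug the Gaussian domination \eqref{control clasico} into the definition of $K_\sigma^\lambda$ and then perform the change of variable $s=|x-y|^2/(4t)$, recognizing the resulting integral as $\Gamma(\sigma+1/2)\,|x-y|^{-1-2\sigma}$ up to a constant. This simultaneously delivers \eqref{estimacion nucleo} and the absolute convergence of the defining integral.

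For \eqref{20.1} the strategy is to estimate $W_t^\lambda 1(x)-1$ in two regimes, exploiting that $\mathbb{W}_t 1\equiv 1$ on $\R$. I would write
$$W_t^\lambda 1(x)-1=\int_0^\infty\bigl(W_t^\lambda(x,y)-\mathbb{W}_t(x-y)\bigr)\,dy-\int_{-\infty}^0\mathbb{W}_t(x-y)\,dy$$
and split time at $t=x^2$. For $t\geq x^2$ the bound \eqref{control clasico} gives $W_t^\lambda 1(x)\leq C$, hence $|W_t^\lambda 1(x)-1|\leq C$, producing the contribution $\int_{x^2}^\infty dt/t^{1+\sigma}=Cx^{-2\sigma}$. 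For $t<x^2$ the target is the refined bound $|W_t^\lambda 1(x)-1|\leq Ct/x^2$: the tail $\int_{-\infty}^0\mathbb{W}_t(x-y)\,dy$ is a Gaussian tail of order $(\sqrt{t}/x)e^{-x^2/(4t)}$, which is dominated by $t/x^2$; in the first integral I split at $y=2t/x$, using \eqref{W origen} together with elementary Gaussian bounds on $(0,2t/x)$ (both pieces are exponentially small) and the refined expansion \eqref{25.1} on $(2t/x,\infty)$, which yields $|W_t^\lambda(x,y)-\mathbb{W}_t(x-y)|\leq C\mathbb{W}_t(x-y)\,t/(xy)$. A further split at $y=x/2$ treats the range $y<x/2$ by Gaussian decay of $\mathbb{W}_t(x-y)$ and the range $y\geq x/2$ via $t/(xy)\leq 2t/x^2$ and $\int\mathbb{W}_t(x-y)\,dy\leq 1$. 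Integrating in time gives $\int_0^{x^2}(Ct/x^2)\,dt/t^{1+\sigma}=Cx^{-2\sigma}$, completing \eqref{20.1} and absolute convergence.

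Continuity of $B_\sigma^\lambda$ on $\R_+$ will follow from dominated convergence: the explicit formula \eqref{heat kernel} makes $(x,t)\mapsto W_t^\lambda 1(x)$ jointly continuous on $\R_+\times\R_+$, and the two-regime bound just obtained furnishes a locally uniform integrable majorant for the integrand defining $B_\sigma^\lambda$.

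The main technical obstacle is the derivation of $|W_t^\lambda 1(x)-1|\leq Ct/x^2$ for small $t$: the saving by one full power of $t/x^2$ can only be achieved by actually using the cancellation in \eqref{25.1} between $W_t^\lambda(x,y)$ and $\mathbb{W}_t(x-y)$, and one must carefully arrange the $y$-splittings at both $2t/x$ and $x/2$ to ensure that every leftover piece is either absorbed into Gaussian decay or controlled by $t/x^2$ uniformly in $y$.
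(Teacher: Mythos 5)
Your proposal is correct and rests on the same ingredients as the paper's proof: the Gaussian domination \eqref{control clasico} followed by the change of variables $r=|x-y|^2/(4t)$ for \eqref{estimacion nucleo}, and, for \eqref{20.1}, the identical decomposition $W_t^\lambda 1(x)-1=\int_0^\infty\big(W_t^\lambda(x,y)-\W_t(x-y)\big)\,dy-\int_0^\infty\W_t(x+y)\,dy$ together with the crude bound \eqref{control clasico} where $xy\lesssim t$ and the refined expansion \eqref{25.1} where $xy\gtrsim t$. The organization differs in a worthwhile way, though. The paper estimates the double $(y,t)$-integral directly, splitting first at $y=t/x$ and then into the spatial ranges $(0,x/2)$, $(x/2,2x)$, $(2x,\infty)$, so that six iterated integrals are each evaluated to $Cx^{-2\sigma}$; you instead extract the intermediate pointwise bound $|W_t^\lambda 1(x)-1|\leq C\min\{1,t/x^2\}$ (trivial for $t\geq x^2$; for $t<x^2$ via your splits at $y=2t/x$ and $y=x/2$) and then integrate $\int_0^\infty \min\{1,t/x^2\}\,t^{-1-\sigma}dt=Cx^{-2\sigma}$. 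This buys a shorter computation and, more importantly, the same bound is locally uniform in $x$ and therefore serves directly as the integrable majorant in the dominated-convergence argument for continuity, whereas the paper must refer back to its absolute-convergence computation and separately build a majorant $g_t(y)$ for the inner $y$-integral (a step you should still include, since joint continuity of the kernel alone does not give continuity of $x\mapsto W_t^\lambda 1(x)$ without such a domination). Two small points to make explicit when writing this up: \eqref{25.1} carries uniform constants only where $xy/(2t)\geq 1$, which your split $y>2t/x$ guarantees; and in the range $x^2/4<t<x^2$, where $2t/x>x/2$ and the ``exponentially small'' claim on $(0,2t/x)$ degenerates, the target bound $Ct/x^2\geq C/4$ is met by the trivial estimate $W_t^\lambda 1(x)\leq 1$.
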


    \begin{proof}
    The first one is an immediate consequence of \eqref{control clasico} and the change of variables $r=\frac{|x-y|^2}{4t}$:
    \begin{equation}\label{cuenta clasica}
    0\leq K_\sigma^\lambda(x,y)
            \leq C \int_0^\infty \frac{e^{-|x-y|^2/(4t)}}{t^{1/2}}\frac{dt}{t^{1+\sigma}}
            =\frac{C}{|x-y|^{1+2\sigma}}.
     \end{equation}

    For \eqref{20.1}, observe that we can write
        $$B_\sigma^\lambda(x)
               =\frac{1}{\G(-\sigma)} \int_0^\infty \int_0^\infty \big(W_t^\lambda(x,y)- \W_t(x-y) \big) \frac{dy\,dt}{t^{1+\sigma}}
                        - \frac{1}{\G(-\sigma)} \int_0^\infty \int_0^\infty \W_t(x+y) \frac{dy\,dt}{t^{1+\sigma}}. $$
        So we have $B_{\sigma}^\lambda(x)=:A_1(x)-A_2(x)$. Let us estimate the second term:
       $$|A_2(x)| = C \int_0^\infty \int_0^\infty e^{-(x+y)^2/(4t)}\frac{dt\,dy}{t^{\sigma+3/2}}
                = C \int_0^\infty \frac{dy}{(x+y)^{2\sigma+1}} \int_0^\infty e^{-s}s^\sigma\frac{ds}{s^{1/2}}
                =\frac{C}{x^{2\sigma}}.$$
       As for the first term, from \eqref{control clasico} and \eqref{25.1} it follows that
        \begin{align*}
            |A_1(x)|
                & \leq C \left( \int_0^\infty \int_0^{t/x} \W_t(x-y) \frac{dy\,dt}{t^{1+\sigma}}
                +\int_0^\infty \int_{t/x}^\infty  \W_t(x-y) \frac{t}{xy}\frac{dy\,dt}{t^{1+\sigma}}\right)\\
                & =: C \big(A_{1,1}(x) + A_{1,2}(x) \big).
        \end{align*}
        Now
        \begin{align*}
            A_{1,1}(x)
                &= C\int_0^\infty \Bigg( \int_{(0,t/x)\cap (0,x/2)} + \int_{(0,t/x)\cap (x/2,2x)} + \int_{(0,t/x)\cap (2x,\infty)}\Bigg)
                            e^{-|x-y|^2/(4t)} \frac{dy\,dt}{t^{\sigma + 3/2}} \\
                &\leq C\int_0^\infty \int_{(0,t/x)\cap (0,x/2)}  e^{-cx^2/t} \frac{dy\,dt}{t^{\sigma + 3/2}}
                 + C\int_{x/2}^{2x}\int_{xy}^\infty \frac{dt\,dy}{t^{\sigma + 3/2}} \\
                &\quad + C \int_0^\infty \int_{(0,t/x)\cap (2x,\infty)}  e^{-cy^2/t} \frac{dy\,dt}{t^{\sigma + 3/2}} \\
                &\leq Cx\int_0^\infty \frac{e^{-cx^2/t}}{t^{\sigma + 3/2}} \,dt
                              +   C\int_{x/2}^{2x} \frac{dy}{(xy)^{\sigma + 1/2}}
                              +C \int_{2x^2}^\infty  \int_0^\infty \frac{e^{-cz^2}}{t^{\sigma + 1}} \,dz\, dt
                =\frac{C}{x^{2\sigma}}.
        \end{align*}
        By proceeding similarly,
        \begin{align*}
            A_{1,2}(x)
               & \leq  C \int_0^\infty \int_{(t/x,\infty)\cap (0,x/2)}\frac{e^{-cx^2/t}}{y^2} \frac{dy\,dt}{t^{\sigma + 1/2}}
                             + C\int_0^\infty \int_{(t/x,\infty)\cap (2x,\infty)}  \frac{e^{-cy^2/t}}{x^2} \frac{dy\,dt}{t^{\sigma + 1/2}} \\
                     &  \quad+ C \int_{x^2}^\infty \int_{(t/x,\infty)\cap (x/2,2x)}\frac{dy\,dt}{y^2t^{\sigma + 1/2}}
                             + \frac{C}{x^2} \int_0^{x^2} \int_{(t/x,\infty)\cap (x/2,2x)}\W_t(x-y)\,dy\,\frac{dt}{t^\sigma} \\
              &  \leq C x \int_0^\infty \frac{e^{-cx^2/t}}{t^{\sigma + 3/2}} \,dt
                              + C\int_0^\infty \frac{e^{-cx^2/t}}{t^{\sigma + 1}}\, dt
                              + Cx \int_{x^2}^\infty \frac{dt}{t^{\sigma + 3/2}}
                              + \frac{C}{x^2} \int_0^{x^2} \frac{dt}{t^{\sigma }}=\frac{C}{x^{2\sigma}}.
        \end{align*}
        Putting together the estimates above we get \eqref{20.1}.

        Now we prove the continuity of $B_\sigma^\lambda$ in $\R_+$. Let $x_0 \in \R_+$. We have that
        \begin{align*}
             \lim_{x \to x_0} \frac{1}{\G(-\sigma)} \int_0^\infty \big(W_t^\lambda1(x) - 1 \big)\,\frac{dt}{t^{1+\sigma}}
                & = \frac{1}{\G(-\sigma)} \int_0^\infty \Big(\lim_{x \to x_0}  W_t^\lambda1(x) - 1 \Big)\,\frac{dt}{t^{1+\sigma}} \\
                & = \frac{1}{\G(-\sigma)} \int_0^\infty \big(  W_t^\lambda1(x_0) - 1 \big)\,\frac{dt}{t^{1+\sigma}}
                  = B_\sigma^\lambda(x_0).
        \end{align*}
        Indeed, the first equality follows from the dominated convergence theorem
        because the integral is absolutely convergent, see Theorem \ref{Thm:Puntual} and the
        proof of Lemma \ref{Lem:funcion}.
        The second one is true provided that
        $W_t^\lambda1$ is a continuous function in $x_0$. To see this last fact, we proceed by taking limits as above. From the definition
        \eqref{heat kernel}, $W_t^\lambda(\cdot, y)$ is continuous for every $t,y \in \R_+$. Moreover, for every $a>0$,
        and application of \eqref{control clasico} leads to
        \begin{align*}
            W_t^\lambda(x,y)
                & \leq \frac{C}{\sqrt{t}} \Big( \chi_{(0,2a)}(y) + \chi_{(2a, \infty)}(y) e^{-cy^2/t}  \Big)
                =: g_t(y), \quad t,y \in \R_+, \ x \in (0,a),
        \end{align*}
        and $g_t \in L^1(0,\infty)$, for every $t \in \R_+$.

    \end{proof}

    Next we get the pointwise formulas for test functions.

    \begin{Prop}\label{Prop_representacion}
     Let $\phi\in\mathcal{S}_\lambda$. Then, for every $x\in\R_+$, \eqref{3.2} holds when $\phi$ replaces $u$.
     Furthermore,
     the pointwise formulas \eqref{10.1} and \eqref{19.1} of Theorem \ref{Thm:Puntual} are valid for $\phi$ in place of $u$
     and without any restriction on $\lambda>0$.
    \end{Prop}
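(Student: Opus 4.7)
My plan is to derive the pointwise formulas directly from the spectral identity \eqref{3.2}, which has already been verified for $\phi\in\mathcal{S}_\lambda$ in the paragraph preceding Theorem~\ref{Thm:Puntual} (substitute $y^{2\sigma}=\frac{1}{\Gamma(-\sigma)}\int_0^\infty(e^{-ty^2}-1)\frac{dt}{t^{1+\sigma}}$ into $\Delta_\lambda^\sigma\phi=h_\lambda(y^{2\sigma}h_\lambda\phi)$, swap integrals, and recognise \eqref{defcalor}). Using $W_t^\lambda 1(x)=\int_0^\infty W_t^\lambda(x,y)\,dy$, I decompose
$$W_t^\lambda\phi(x)-\phi(x)=\int_0^\infty W_t^\lambda(x,y)(\phi(y)-\phi(x))\,dy+\phi(x)\bigl(W_t^\lambda 1(x)-1\bigr),$$
so that, after multiplication by $\frac{1}{\Gamma(-\sigma)t^{1+\sigma}}$ and integration in $t$, the second summand produces exactly $\phi(x)B_\sigma^\lambda(x)$. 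The task reduces to identifying the first summand with the (principal-value) integral against $K_\sigma^\lambda$.

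For $0<\sigma<1/2$ I would apply Tonelli--Fubini. By Remark~\ref{rem:test acotadas}, $\phi$ is bounded on $\R_+$ and smooth on $(0,\infty)$, so $|\phi(y)-\phi(x)|\leq C_x\min(1,|y-x|)$ for each fixed $x>0$. Combined with \eqref{estimacion nucleo} and the local integrability of $|y-x|^{-2\sigma}$ (as $2\sigma<1$), the iterated integral $\int_0^\infty|\phi(y)-\phi(x)|K_\sigma^\lambda(x,y)\,dy$ is finite, legitimising the swap and yielding the absolutely convergent formula \eqref{10.1}. The argument uses only the smoothness of $\phi$ in a neighbourhood of the fixed point $x>0$, so no constraint on $\lambda$ appears.

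For $1/2\leq\sigma<1$ the double integral is only conditionally convergent near the diagonal. I would first truncate to $\{|y-x|>\varepsilon\}\cap\R_+$, where the preceding argument applies; a direct manipulation using \eqref{3.2} and the definition of $B_\sigma^\lambda$ then gives
$$\int_{|y-x|>\varepsilon,\,y>0}(\phi(x)-\phi(y))K_\sigma^\lambda(x,y)\,dy=\Delta_\lambda^\sigma\phi(x)-\phi(x)B_\sigma^\lambda(x)+E(\varepsilon),$$
where
$$E(\varepsilon)=\frac{1}{\Gamma(-\sigma)}\int_0^\infty\int_{|y-x|<\varepsilon,\,y>0}(\phi(x)-\phi(y))W_t^\lambda(x,y)\,dy\,\frac{dt}{t^{1+\sigma}}.$$
Formula \eqref{19.1} follows as soon as $E(\varepsilon)\to 0$ is established.

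The vanishing of $E(\varepsilon)$ is the main obstacle, since its $t$-integrand is not uniformly absolutely convergent once $\sigma\geq 1/2$. I would use the Taylor expansion $\phi(y)=\phi(x)+\phi'(x)(y-x)+R(x,y)$ with $|R(x,y)|\leq C_x|y-x|^2$. Tonelli against \eqref{estimacion nucleo} shows the remainder contributes $O(\varepsilon^{2-2\sigma})\to 0$ since $\sigma<1$. The linear term is handled via \eqref{25.1}: the classical piece $\W_t(x-y)$ integrates to zero on the symmetric interval $\{|y-x|<\varepsilon\}$ (for $\varepsilon<x$) by oddness of $y\mapsto y-x$, while the Bessel correction satisfies $|W_t^\lambda(x,y)-\W_t(x-y)|\leq C_x\,t\,\W_t(x-y)$ on $\{y\in(x/2,3x/2)\}$ for $t\lesssim x^2$ and $O(t^{-1/2})$ for larger $t$. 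Combined with the elementary estimate $\int_{|y-x|<\varepsilon}|y-x|\W_t(x-y)\,dy\leq\min(\varepsilon,C\sqrt{t})$, integrating in $t$ produces a bound of order $\varepsilon^{\min(1,3-2\sigma)}$, which vanishes. Because only the local smoothness of $\phi$ at a fixed $x>0$ is invoked throughout, no restriction on $\lambda>0$ is required.
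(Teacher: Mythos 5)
Your argument is correct. For the spectral identity \eqref{3.2} and for the case $0<\sigma<1/2$ you follow essentially the same path as the paper: the decomposition through $W_t^\lambda 1(x)$, the kernel bound \eqref{estimacion nucleo}, and absolute convergence from the local bound on $\phi'$ near $x$ together with global boundedness of $\phi$, so that Fubini applies; this matches the paper's proof almost verbatim. For $1/2\leq\sigma<1$ you take a genuinely different route. The paper adds and subtracts the classical heat extension $\W_t\phi_0(x)$ (with $\phi_0$ the extension of $\phi$ by zero), shows the difference term $\int_0^\infty(W_t^\lambda\phi(x)-\W_t\phi_0(x))\,t^{-1-\sigma}dt$ is absolutely convergent by repeating the $A_1(x)$ estimate of Lemma \ref{Lem:funcion}, and then imports the known principal-value representation of $(-d^2/dx^2)^{\sigma}\phi_0$ via Taylor's formula, recombining at the end. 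You instead keep the Bessel kernel throughout, truncate at $|x-y|>\varepsilon$, and estimate the near-diagonal error $E(\varepsilon)$ directly: second-order Taylor for the remainder (giving $O(\varepsilon^{2-2\sigma})$), oddness of $y\mapsto x-y$ against $\W_t$ for the linear term, and the asymptotics \eqref{25.1} plus the crude $O(t^{-1/2})$ bound for $t\gtrsim x^2$ to control the Bessel correction. Your version is more self-contained (it does not invoke the external result for the one-dimensional fractional Laplacian) and makes transparent why only local smoothness at the fixed point $x$ — hence no restriction on $\lambda$ — is needed; the paper's version has the side benefit of producing the bound $C/x^{2\sigma}$ for the comparison term, which is reused later in Remark \ref{Cor_absint} and Lemma \ref{Lem SS}, whereas your route would require a separate argument for that estimate. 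Both are sound; the ingredients (Taylor's formula, \eqref{25.1}, cancellation of the odd part) are ultimately the same, arranged differently.
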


    \begin{proof}
    As it was explained prior to Theorem \ref{Thm:Puntual}, \eqref{3.2} is true
    when $\phi$ replaces $u$ and every $x\in\R_+$.

    For the second part of the proposition, fix any $\lambda>0$ and $x \in \R_+$.
    Assume that $0<\sigma<1/2$. According to \eqref{3.2} and \eqref{I1} we can write
        \begin{align*}
            \Delta_\lambda^\sigma \phi (x)
                &= \frac{1}{\G(-\sigma)} \int_0^\infty \big(W_t^\lambda \phi(x)- \phi(x)W_t^\lambda 1(x)\big)\frac{dt}{t^{1+\sigma}}
                   + \frac{\phi(x)}{\G(-\sigma)} \int_0^\infty \big( W_t^\lambda1(x)- 1 \big) \frac{dt}{t^{1+\sigma}} \\
                &= \frac{1}{\G(-\sigma)} \int_0^\infty\int_0^\infty W_t^\lambda(x,y)\big( \phi(y)- \phi(x)\big)\frac{dy\,dt}{t^{1+\sigma}}
                   + \phi(x)B_\sigma^\lambda(x).
        \end{align*}
        Let us see that the first integral is absolutely convergent. By using \eqref{control clasico},
        the second part in \eqref{cuenta clasica}, the mean value theorem and the facts that
        $\phi$ is bounded (see Remark \ref{rem:test acotadas}) and $\phi'$ --being smooth-- is bounded in a neighborhood of $x$,
       \begin{align*}
             \int_0^\infty&\int_0^\infty W_t^\lambda(x,y)\big| \phi(y)- \phi(x)\big|\frac{dy\,dt}{t^{1+\sigma}}\\
              &  \leq C\Bigg(\int_0^{x/2}~+\int_{x/2}^{2x}~+\int_{2x}^\infty~\Bigg)\int_0^\infty
                    \mathbb{W}_t(x-y)\,\frac{dt}{t^{1+\sigma}}|\phi(y)-\phi(x)|\,dy \\
            & \leq C\Bigg(\int_0^{x/2}~+\int_{2x}^\infty~\Bigg)\frac{dy}{|x-y|^{1+2\sigma}}
                +C\int_{x/2}^{2x}\frac{|x-y|}{|x-y|^{1+2\sigma}}\,dy=C_{\sigma,x,\phi}.
       \end{align*}
       The last integral is finite because $2\sigma<1$. Hence, by Fubini's theorem the result follows with $K_\sigma^\lambda(x,y)$
       as in the statement.

        Suppose now that $1/2 \leq \sigma < 1$. We extend the domain of definition of $\phi\in \mathcal{S}_\lambda$
        to the whole line by putting $\phi(z)\equiv0$ for $z \leq 0$, and we call this new function $\phi_0(z)$.
        We also set $W_t^\lambda(x,y)\equiv0$ for $y\leq0$.
        Let us add and subtract $\mathbb{W}_t\phi_0(x)$ in \eqref{3.2} (with $\phi$ replacing $u$) to get,
        \begin{align*}
            \Delta_\lambda^\sigma \phi (x)
                = & \frac{1}{\G(-\sigma)} \int_0^\infty \big(W_t^\lambda\phi(x)- \W_t\phi_0(x)\big) \frac{dt}{t^{1+\sigma}}
                   + \frac{1}{\G(-\sigma)} \int_0^\infty \big(\W_t\phi_0(x) - \phi(x) \big)\frac{dt}{t^{1+\sigma}}.
        \end{align*}
        Since $\phi$ is a bounded function on $\R_+$, by writing down the semigroups
        with their respective kernels and proceeding as with $A_1(x)$ in the proof of Lemma \ref{Lem:funcion} above we get
        that the first integral is absolutely convergent and, moreover, it is bounded by $C/x^{2\sigma}$.
        Also, by taking into account Taylor's formula for $\phi$ around $x \in \R_+$
        (see for example  \cite[p.~9]{SilPhD} and \cite[Lemma~5.1]{Stinga-Torrea}),
        $$\int_0^\infty \big(\W_t\phi_0(x) - \phi(x)\big)\frac{dt}{t^{1+\sigma}}
            = \lim_{\varepsilon \to 0^+} \int_{|x-y|>\varepsilon} \int_0^\infty \W_t(x-y)
            \big( \phi_0(y)-\phi(x) \big) \frac{dt\,dy}{t^{1+\sigma}}.$$
        We remark that in order to apply Taylor's formula above we only need $\phi''$ to
        be bounded in a small neighborhood of $x$, which in this case is true because $\phi$ is smooth.
        Hence, with this and Fubini's theorem we conclude that
        \begin{align*}
            \Delta_\lambda^\sigma \phi (x)
               & =  \lim_{\varepsilon \to 0^+} \frac{1}{\G(-\sigma)} \int_{|x-y|>\varepsilon} \int_0^\infty
                \big(W_t^\lambda(x,y)-\W_t(x-y)\big)\phi_0(y) \frac{dt\,dy}{t^{1+\sigma}}   \\
                  &\quad + \lim_{\varepsilon \to 0^+} \frac{1}{\G(-\sigma)} \int_{|x-y|>\varepsilon}
                  \int_0^\infty  \W_t(x-y)  \big(\phi_0(y)-\phi(x) \big) \frac{dt\,dy}{t^{1+\sigma}} \\
               & =  \lim_{\varepsilon \to 0^+} \frac{1}{\G(-\sigma)} \int_{|x-y|>\varepsilon}
               \int_0^\infty \big(W_t^\lambda(x,y)-\W_t(x-y)\big)\big(\phi_0(y)-\phi(x)\big)\frac{dt\,dy}{t^{1+\sigma}}  \\
                  & \quad+ \lim_{\varepsilon \to 0^+} \frac{1}{\G(-\sigma)} \int_{|x-y|>\varepsilon}
                  \int_0^\infty \W_t(x-y)  \big(\phi_0(y)-\phi(x)\big) \frac{dt\,dy}{t^{1+\sigma}}  \\
                  & \quad+ \phi(x)\lim_{\varepsilon \to 0^+} \frac{1}{\G(-\sigma)} \int_{|x-y|>\varepsilon}
                  \int_0^\infty  \big(W_t^\lambda(x,y)-\W_t(x-y)\big)\frac{dt\,dy}{t^{1+\sigma}}  \\
               & =  \lim_{\varepsilon \to 0^+} \int_{0, \ |x-y|>\varepsilon}^\infty  \big(\phi(x) - \phi(y) \big)
               K_\sigma^\lambda(x,y)\,dy + \phi(x) B_\sigma^\lambda(x).
        \end{align*}
        In the last identity we use that the double integral defining $B_\sigma^\lambda(x)$ is absolutely convergent
        (see the proof of Lemma \ref{Lem:funcion}).
    \end{proof}

    \begin{Rem}\label{Cor_absint}
    {\rm By using Taylor's formula we can replace the principal value in the integral of \eqref{19.1} by an absolutely convergent integral.
    For $1/2 \leq \sigma < 1$ and $\phi \in \mathcal{S}_\lambda$ we can write
        \begin{align*}
            \Delta_\lambda^\sigma \phi (x)
                &=\int_0^\infty \big(\phi(x)-\phi(y)-\phi'(x)(x-y)\chi_{\{|x-y|\leq 1\}}(x,y)\big)K_\sigma^\lambda(x,y)\, dy \nonumber \\
                &\qquad + \phi(x)B_\sigma^\lambda(x) + \phi'(x)C_\sigma^\lambda(x), \quad x \in \R_+,~\lambda>0,
        \end{align*}
        where $K_\sigma^\lambda(x,y)$ and $B_\sigma^\lambda(x)$ are as in Theorem \ref{Thm:Puntual} and
        $$C_\sigma^\lambda(x)
            = \lim_{\varepsilon\to 0^+}\int_0^\infty  K_\sigma^\lambda(x,y) (x-y) \chi_{\{\varepsilon\leq |x-y|\leq1\}}(x,y)\,dy.$$
      As in \eqref{20.1}, it can be checked (see the proof in Section \ref{sec:demostraciones}) that

\begin{equation}\label{csigma}
|C_{1/2}^\lambda(x)|\leq C\big(1+\chi_{(0,1)}(x)|\ln x|\big),\qquad\hbox{and}\qquad
|C_{\sigma}^\lambda(x)|\leq C x^{1-2\sigma},~\hbox{for}~1/2 < \sigma <1.
\end{equation}}

 \end{Rem}

    We would like to define $\Delta_\lambda^\sigma$ in the space of distributions $\mathcal{S}_\lambda'$, but this is not
    possible because, as we show in Lemma \ref{Lem SS} below,
    the operator $\Delta_\lambda^\sigma$ does not preserve the class $\mathcal{S}_\lambda$. What we can prove
    is that $\Delta_\lambda^\sigma:\mathcal{S}_\lambda\to\mathfrak{S}_\sigma$, where the space
    $\mathfrak{S}_\sigma$ is constituted by all those continuous functions $\varphi$ on $\R_+$ such that
    $$\gamma^\sigma(\varphi) = \sup_{x \in\R_+} (1+x)^{1+2 \sigma}|\varphi(x)| <\infty.$$
    The dual space of the normed space $(\mathfrak{S}_\sigma,\gamma^\sigma)$ is denoted by $\mathfrak{S}_\sigma'$.

    \begin{Lem}\label{Lem SS}
        Let $0<\sigma<1$. Assume that $\lambda \geq1$ when $0<\sigma<1/2$ and that $\lambda\geq2$ when $1/2\leq\sigma<1$.
        Then $\Delta_\lambda^\sigma$ is a bounded operator from $\mathcal{S}_\lambda$ into $\mathfrak{S}_\sigma$.
    \end{Lem}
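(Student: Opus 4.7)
The plan is to combine the pointwise representations provided by Proposition \ref{Prop_representacion}, which hold for any $\phi \in \mathcal{S}_\lambda$ and any $\lambda > 0$, with the kernel and coefficient estimates \eqref{estimacion nucleo}, \eqref{20.1}, \eqref{csigma}, and the pointwise bounds on $\phi$, $\phi'$ and $\phi''$ recorded in Remark \ref{rem:test acotadas}. The target bound $\gamma^\sigma(\Delta_\lambda^\sigma\phi) \leq C \sum_{(m,k)\in F} \gamma_{m,k}(\phi)$ for a finite index set $F$ splits naturally into boundedness of $\Delta_\lambda^\sigma\phi(x)$ for $x\in(0,1]$ and decay of order $x^{-1-2\sigma}$ for $x\geq 1$. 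Continuity on $\R_+$ comes for free from Theorem \ref{Thm:Puntual}: when $0<\sigma<1/2$ and $\lambda\geq 1$, Remark \ref{rem:test acotadas} together with the seminorm bound $|\phi(x)|\leq\gamma_{m,0}(\phi)\,x^\lambda(1+x)^{-m}$ place $\phi$ in some $C^\alpha_+$ with $2\sigma<\alpha<1$; when $1/2\leq\sigma<1$ and $\lambda\geq 2$, the same remark makes $\phi''$ bounded, so $\phi'\in C^\alpha_+$ for some $2\sigma-1<\alpha<1$.

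I would then estimate each term of the representation. The pieces $\phi(x)B_\sigma^\lambda(x)$ and, for $\sigma\geq 1/2$, $\phi'(x)C_\sigma^\lambda(x)$ are immediate: \eqref{20.1} and \eqref{csigma}, together with $|\phi(x)|\leq\gamma_{m,0}(\phi)\,x^\lambda(1+x)^{-m}$ and the analogous estimate for $\phi'$ from Remark \ref{rem:test acotadas}, yield a bound that is controlled by $x^{\lambda-2\sigma}$ near $0$ and decays faster than any power of $x$ at infinity once $m$ is large. For the singular integral I would split the $y$-region into the local part $|x-y|\leq x/2$ and the global part $|x-y|>x/2$. Locally, I Taylor-expand $\phi$ around $x$ to order one (order two for $\sigma\geq 1/2$, via Remark \ref{Cor_absint}), use the bounds on $\phi'$ (and $\phi''$) from Remark \ref{rem:test acotadas}, and integrate $K_\sigma^\lambda(x,y)\leq C|x-y|^{-1-2\sigma}$; the resulting bound is $Cx^{1-2\sigma}$ times a supremum of $\phi'$ (or $\phi''$) over $[x/2,3x/2]$, which is bounded for $x$ small and rapidly decaying for $x$ large. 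Globally, I would dominate $|\phi(x)-\phi(y)|$ by $|\phi(x)|+|\phi(y)|$ and split the outer region into $y<x/2$ and $y>3x/2$; in both ranges $K_\sigma^\lambda$ picks up a polynomial decay in $x$ or $y$, and the integrals $\int_0^{x/2}|\phi|\,dy$ and $\int_{3x/2}^\infty |\phi(y)|\,y^{-1-2\sigma}\,dy$ are controlled by $\gamma_{m,0}(\phi)$ once $m$ is chosen large enough to match both the $y^\lambda$ growth at $0$ and the required $x^{-1-2\sigma}$ decay at $\infty$.

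The main obstacle is the bookkeeping needed to express every bound above in terms of finitely many seminorms $\gamma_{m,k}(\phi)$, which forces a careful choice of $m$ at each step so that the polynomial decay of $\phi$, $\phi'$ (and, for $\sigma\geq 1/2$, $\phi''$) beats every polynomial weight that appears. A genuine technical point is the case $\sigma\geq 1/2$: there $K_\sigma^\lambda$ is not locally integrable near the diagonal, so one cannot split $\phi(x)-\phi(y)$ inside the integral and must work with the absolutely convergent Taylor-corrected form of Remark \ref{Cor_absint}; this is exactly where the hypothesis $\lambda\geq 2$ enters, as it is what Remark \ref{rem:test acotadas} requires to guarantee that $\phi''$, needed to control the quadratic remainder, is bounded by the seminorms of $\phi$.
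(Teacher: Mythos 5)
Your proposal follows essentially the same route as the paper: start from the pointwise representation of Proposition \ref{Prop_representacion} (and, for $1/2\leq\sigma<1$, its Taylor-corrected form in Remark \ref{Cor_absint}), bound the multiplicative terms via \eqref{20.1} and \eqref{csigma} together with the seminorm estimates of Remark \ref{rem:test acotadas}, and control the singular integral by a local/global splitting in $y$ with the mean value (resp.\ second-order Taylor) theorem near the diagonal and the rapid decay of $\phi$ away from it. The paper only writes out $0<\sigma<1/2$ and declares the other case ``completely analogous''; your sketch of how $\lambda\geq2$ and the bound on $\phi''$ enter for $\sigma\geq1/2$ fills in exactly what the paper leaves implicit, and the difference between your cutoff $|x-y|\leq x/2$ and the paper's $|x-y|\leq1$ (combined with the dichotomy $x\leq2$ versus $x>2$) is pure bookkeeping.

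One point should be repaired: you cannot invoke Theorem \ref{Thm:Puntual} to get continuity of $\Delta_\lambda^\sigma\phi$, because the proof of that theorem uses Lemma \ref{Lem SS} itself (it is what justifies the convergence $\Delta_\lambda^\sigma\phi_j\to\Delta_\lambda^\sigma u$ in $\mathcal{S}_\lambda'$), so your argument would be circular. This is harmless, since continuity is already available without any of this machinery: for $\phi\in\mathcal{S}_\lambda$ the spectral definition $\Delta_\lambda^\sigma\phi=h_\lambda\big(y^{2\sigma}h_\lambda\phi\big)$ gives $\Delta_\lambda^\sigma\phi\in C^\infty(\R_+)$ directly, as the paper records right after the definition. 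Replace the appeal to Theorem \ref{Thm:Puntual} by that observation and the proof is complete.
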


    \begin{proof}
        We just show the proof for $0<\sigma<1/2$. The case $1/2\leq\sigma<1$ is completely analogous
        in view of Remark \ref{Cor_absint}. Let $\phi \in \mathcal{S}_\lambda$. We need to estimate
        the two terms in the formula for $\Delta_\lambda^\sigma\phi$ of Theorem \ref{Thm:Puntual}
        (see Proposition \ref{Prop_representacion}).
        For the multiplicative term, by \eqref{20.1} it follows that
        $$|\phi(x)B_\sigma^\lambda(x)|
            \leq C x^{-2\sigma} |\phi(x)|
            \leq C \frac{\gamma_{m,0}(\phi)}{(1+x)^{1+2\sigma}},$$
        for some $m \in \mathbb{N}$ large enough. Here we have taken into account that $\lambda\geq1>2\sigma$.
        It remains to consider the integral term. By applying \eqref{estimacion nucleo} it is enough to see that
        \begin{equation}\label{to prove}
        \int_0^\infty \frac{|\phi(x)-\phi(y)|}{|x-y|^{1+2\sigma}}\,dy\leq\frac{C}{(1+x)^{1+2\sigma}}.
        \end{equation}
        Let us distinguish two cases. The first one is when $0<x \leq 2$. Then, since $\phi'$ is bounded
        (because $\lambda\geq1$, see Remark \ref{rem:test acotadas}) and $\phi$ is bounded,
        we can use the mean value theorem to bound the integral in \eqref{to prove} by
        $$\int_{0,\,|x-y|\leq 1}^\infty \frac{\|\phi'\|_{L^\infty(\R_+)}|x-y|}{|x-y|^{1+2\sigma}}\,dy
        +\int_{0, \, |x-y| \geq 1}^\infty \frac{2 \|\phi\|_{L^\infty(\mathbb{R}_+)}}{|x-y|^{1+2\sigma}}\,dy
        \leq C\leq\frac{C}{(1+x)^{1+2\sigma}}.$$
Suppose now that $x >2$.  If $|x-y|<1$, then $y>x/2$. Observe that if in this case we also have $y<x$ then, by the mean value theorem
and reasoning as in Remark \ref{rem:test acotadas},
\begin{align*}
|\phi(x)-\phi(y)|&\leq|x-y| \sup_{x/2\leq\xi\leq x}|\phi'(\xi)|\leq 2^{1+2\sigma}\frac{|x-y|}{x^{1+2\sigma}}
 \sup_{x/2\leq\xi\leq x}\xi^{1+2\sigma}|\phi'(\xi)|\\
&\leq  C\frac{|x-y|}{x^{1+2\sigma}}\big(\gamma_{m,0}(\phi)+\gamma_{m,1}(\phi)\big),
\end{align*}
for a sufficiently large $m$;  while if $y\geq x$ then, as above,
\begin{align*}
|\phi(x)-\phi(y)|&\leq|x-y| \sup_{x\leq\xi\leq y}|\phi'(\xi)|\leq\frac{|x-y|}{x^{1+2\sigma}}
 \sup_{x\leq\xi\leq y}\xi^{1+2\sigma}|\phi'(\xi)|\leq  C\frac{|x-y|}{x^{1+2\sigma}}.
\end{align*}
By using these two estimates we see that, when $x>2$,
        $$\int_{|x-y|\leq1}\frac{|\phi(x)-\phi(y)|}{|x-y|^{1+2\sigma}}\, dy
        \leq  \frac{C}{x^{1+2\sigma}}\int_{|x-y|\leq1}\frac{dy}{|x-y|^{2\sigma}}
 =\frac{C}{x^{1+2\sigma}}\leq \frac{C}{(1+x)^{1+2\sigma}}.$$
On the other hand, since for $y<x/2$ we have $x-y> x/2$,  we can estimate
        \begin{align*}
            \int_{|x-y|>1}&\frac{|\phi(x)-\phi(y)|}{|x-y|^{1+2\sigma}}\,dy\leq
            |\phi(x)|\int_{|x-y|>1}\frac{1}{|x-y|^{1+2\sigma}}\,dy+\int_{|x-y|>1}\frac{|\phi(y)|}{|x-y|^{1+2\sigma}}\,dy \\
            &=C|\phi(x)|+\Bigg( \int_{0, \, |x-y|>1}^{x/2}~+\int_{x/2, \, |x-y|>1}^\infty  ~\Bigg)
            \frac{|\phi(y)|}{|x-y|^{1+2\sigma}}\,dy\\
            &\leq C\frac{x^{1+2\sigma+\lambda}|x^{-\lambda}\phi(x)|}{x^{1+2\sigma}}+\frac{C}{x^{1+2\sigma}}
            \int_{0}^{x/2}|\phi(y)|\,dy + C \int_{x/2, \, |x-y|>1}^\infty \frac{y^{1+2\sigma+\lambda}|y^{-\lambda}\phi(y)|}
            {y^{1+2\sigma}|x-y|^{1+2\sigma}}\,dy\\
            &\leq \frac{C}{x^{1+2\sigma}}\big(\gamma_{m,0}(\phi)+\|\phi\|_{L^1(\R_+)}\big) +
            C\frac{\gamma_{m,0}(\phi)}{x^{1+2\sigma}}\int_{|x-y|>1,\,y\in\R}\frac{1}{|x-y|^{1+2\sigma}}\,dy\\
&\leq \frac{C}{(1+x)^{1+2\sigma}},
        \end{align*}
as soon as $m \in \N$ and $m\geq 1+2\sigma+\lambda$. Hence \eqref{to prove} holds.
    \end{proof}

    The operator $\Delta_\lambda^\sigma$ is defined
    in the space $\mathfrak{S}_\sigma'$ by transposition, that is, if $T \in \mathfrak{S}_\sigma'$ then
    $\Delta_\lambda^\sigma T$ is the element of $\mathcal{S}_\lambda'$ defined by
    $$ \Delta_\lambda^\sigma T( \phi)
        = T( \Delta_\lambda^\sigma \phi), \quad  \phi \in\mathcal{S}_\lambda.$$
    We do not need the most abstract and general definition of $\Delta_\lambda^\sigma$ because we work with H\"older continuous functions.
    Consider the space $L_\sigma$ as defined in \eqref{Lrho}.
    If $u \in L_\sigma$, then clearly $u$ defines an element $T_u$ of $\mathfrak{S}_\sigma'$ by
    $$T_u( \psi)
        = \int_0^\infty u(x) \psi(x)dx, \quad \psi \in \mathfrak{S}_\sigma.$$
    To simplify the notation we make the identification $u\equiv T_u$.

    \begin{proof}[Proof of Theorem \ref{Thm:Puntual}]
       Suppose that $u \in C^\alpha_+$, for $0<2\sigma<\alpha< 1$, or that $u\in\mathrm{Lip}_+$ when $\alpha=1$ . For every $j \in \N$, we define the function
       $$\phi_j(x)=\xi_j(x)(\W_{1/j}*u_o)(x), \quad x \in \R_+,$$
       where $u_o$ represents the odd extension of $u$ to the real line, and $\xi_j \in C_c^\infty(\R_+)$ is
       such that $0 \leq \xi_j \leq 1$ and
       $$\xi_j(x)
            = \left\{
                \begin{array}{ll}
                    1, & x \in [2^{-j},j], \\
                    0, & x \in \R_+ \setminus (2^{-j-1},j+1).
                \end{array} \right.$$
       Since $u \in L_\sigma$ we have that
       \begin{align*}
            |\W_{1/j}*u_o(x)|
                &\leq  C\sqrt{j} \int_0^\infty  e^{-j|x-z|^2/4} |u(z)| \,dz + C \sqrt{j} \int_0^\infty  e^{-j(x+z)^2/4} |u(z)| \,dz \\
                &\leq  C \sqrt{j} \int_0^{2x} |u(z)| \,dz + C \sqrt{j} \int_0^\infty  e^{-cjz^2} |u(z)|\, dz \\
                &\leq  C (\sqrt{j} (1+x)^{1+2\sigma} + j^{-\sigma}) \|u\|_{L_\sigma}, \quad x \in \R_+, \ j \in \N.
       \end{align*}
       Moreover, it is clear that $\phi_j \in C_c^\infty(\R_+) \subset \mathcal{S}_\lambda$, $j \in \N$.
       The family $\{\phi_j\}_{j \in \N}$ converges uniformly to $u$ on compact sets of $\R_+$
       as $j\to\infty$, and it also converges to $u$ in $L_\sigma$.
       In fact, if $||| \cdot |||$ denotes the supremum over a compact set of $\R_+$ or the norm in $L_\sigma$, then we have
       \begin{equation}\label{pera}
       \begin{aligned}
            ||| \phi_j - u |||
                & = C ||| \int_\R e^{-z^2/4} [\xi_j (\cdot)u_o(\cdot - z/\sqrt{j}) - u_o(\cdot)]\, dz||| \\
                & \leq C \int_\R e^{-z^2/4} ||| \xi_j (\cdot) (u_o(\cdot - z/\sqrt{j}) - u_o(\cdot)) + (\xi_j(\cdot)-1)u(\cdot) ||| \,dz \\
                & \leq C j^{-\alpha/2} \int_\R e^{-z^2/4} z^\alpha\, dz + C |||(\xi_j(\cdot)-1)u(\cdot) ||| \to0, \quad \text{as } j \to \infty,
       \end{aligned}
       \end{equation}
       because $u_o$ is in $C^\alpha(\R)$ (or in $\mathrm{Lip}(\R)$) (see Lemma~\ref{Lem oddHolder} below).
       Moreover, as a consequence of Lemma~\ref{Lem SS} and by transposition
       \begin{equation}\label{converg S'}
            \Delta_\lambda^\sigma \phi_j \to \Delta_\lambda^\sigma u, \quad \text{as } j \to \infty, \text{ in } \mathcal{S}_\lambda'.
       \end{equation}
       Our next aim is to show that
       \begin{equation}\label{converg unif}
            \int_0^\infty (\phi_j(x)-\phi_j(y))K_\sigma^\lambda(x,y)\,dy
                \to \int_0^\infty (u(x)-u(y))K_\sigma^\lambda(x,y)\,dy, \quad \text{as } j \to \infty,
       \end{equation}
       uniformly on compact sets of $\R_+$. Indeed, let $Q$ be a compact subset of $\R_+$ and $\varepsilon>0$.
       There exist $\ell \in \N$ and $C>1$,
       such that $Q \subset [2^{-\ell},\ell]$ and, for every $j \geq \ell +1$,
       $$\frac{|\phi_j(x)-\phi_j(y)|}{|x-y|^\alpha} \leq C \|u\|_{C^\alpha_+} =:M, \quad \hbox{for all}~x \in Q, \ |x-y|<2^{-\ell-1}. $$
       Now, according to \eqref{estimacion nucleo} we fix $0<\delta<2^{-\ell-1}$ verifying
       $$M \int_{|x-y|<\delta} |x-y|^\alpha K_\sigma^\lambda(x,y)\, dy < \frac{\varepsilon}{3}, \quad
\hbox{for all}~ x \in Q.$$
       Then, for every $j \geq \ell +1$, we have that
       $$\int_{|x-y|<\delta} |\phi_j(x)-\phi_j(y)| K_\sigma^\lambda(x,y)\,dy + \int_{|x-y|<\delta} |u(x)-u(y)| K_\sigma^\lambda(x,y)\,dy
            \leq \frac{2\varepsilon}{3}, \quad x \in Q.$$
       On the other hand, by taking into account again \eqref{estimacion nucleo}, we get
       \begin{align*}
            & \int_{|x-y| \geq \delta} |\phi_j(x)-u(x)| K_\sigma^\lambda(x,y)\,dy
                + \Big(\int_{0, \ |x-y| \geq \delta}^{2x} + \int_{2x, \ |x-y| \geq \delta}^{\infty}\Big) |\phi_j(y)-u(y)| K_\sigma^\lambda(x,y)\,dy \\
            & \qquad \qquad \leq C_\delta \sup_{x \in Q} |\phi_j(x)-u(x)|
                + C_\delta (1+x)^{1+2\sigma} \|\phi_j-u\|_{L_\sigma}
                + C_\delta \|\phi_j-u\|_{L_\sigma}<\frac{\varepsilon}{3},
       \end{align*}
        for all $x \in Q$,  provided that $j$ is large enough. Thus \eqref{converg unif} is proved.

       Finally, from estimate \eqref{20.1},
       $\phi_j B_\sigma^\lambda$ converges uniformly to $u B_\sigma^\lambda$
       on compact sets of $\R_+$ as $j\to\infty$.
       On the other hand, by proceeding as above, we can obtain
       \begin{align*}
            & \Big| \int_0^\infty (u(x)-u(y)) K_\sigma^\lambda(x,y)\, dy + u(x) B_\sigma^\lambda(x) \Big| \\
            & \qquad \leq C \Bigg[ \int_{|x-y| \leq 1} \frac{dy}{|x-y|^{1+2\sigma-\alpha}}
                           + \Big(\int_{0, |x-y|>1}^{2x} + \int_{2x, |x-y|>1}^{\infty} \Big) \frac{|u(x)-u(y)|}{|x-y|^{1+2\sigma}} \,dy
                           + \frac{|u(x)|}{x^{2\sigma}}  \Bigg] \\
            & \qquad \leq C \Big( 1 + x^\alpha + (1+x)^{1+2\sigma} + \int_{2x}^\infty \frac{|u(y)|}{(1+y)^{1+2\sigma}}\,dy + x^{\alpha-2\sigma} \Big) \\
            & \qquad \leq C (1+x)^2, \quad x \in \R_+.
       \end{align*}
       Therefore, the function
       $$\mathcal{U}(x)
            = \int_0^\infty (u(x)-u(y)) K_\sigma^\lambda(x,y)\,dy + u(x)B_\sigma^\lambda(x), \quad x \in \R_+, $$
       defines an element of $\mathcal{S}_\lambda'$.
       Hence, by the uniqueness of the limits, \eqref{converg S'}  and \eqref{converg unif} together with
       Proposition~\ref{Prop_representacion}, we conclude that  the pointwise formula \eqref{10.1}
       holds for $u\in C^\alpha_+\cap L_\sigma$ (or $u\in \mathrm{Lip}_+\cap L_\sigma$). Furthermore, $\Delta_\lambda^\sigma u$ is a continuous function
       because it is the uniform limit in any compact subset of $\mathbb{R}_+$ of continuous functions.

       Assume now that $u' \in C^\alpha_+$, with $0 \leq 2\sigma - 1 < \alpha < 1$,  or $u'\in \mathrm{Lip}_+$ when $\alpha=1$. Then we can proceed as in the previous case.
       In fact, being $\W_{1/j}(z)z$ an odd function, $j \in \N$,
       we can write the sequence $\{\phi_j\}_{j \in \N}$ above as
       $$\phi_j(x)
            = \xi_j(x) \int_\R \W_{1/j}(x-z) (u_o(z)+ u'(x)(x-z)) \,dz, \quad x \in \mathbb{R}_+.$$
       By  the mean value theorem and  since $(u')_o$ is in $C^\alpha(\R)$ (or in $\mathrm{Lip}_+$ when $\alpha=1$),
       we can see that this family preserves the same convergence properties as before.
       Further, $\phi_j'$ converges uniformly to $u'$ on compact sets of $\R_+$ as $j\to\infty$.
       By taking into account Remark~\ref{Cor_absint}, the key point is to show that
       \begin{align*}
            & \int_0^\infty (\phi_j(x)-\phi_j(y)-\phi_j'(x)(x-y)\chi_{\{|x-y| \leq 1\}}(x,y))K_\sigma^\lambda(x,y)\,dy \\
            & \qquad \qquad \to\int_0^\infty (u(x)-u(y)-u'(x)(x-y)\chi_{\{|x-y| \leq 1\}}(x,y))K_\sigma^\lambda(x,y)\,dy, \quad \text{as } j \to \infty,
       \end{align*}
       uniformly on compact sets of $\R_+$. Again, fix $\varepsilon>0$ and a compact set $Q \subset [2^{-\ell},\ell]$, for some $\ell \in \N$.
       In this case we have that
       $$\frac{|\phi_j(x)-\phi_j(y)-\phi_j'(x)(x-y)|}{|x-y|^{\alpha+1}} \leq C \|u'\|_{C^\alpha_+} =:M, \quad
        \hbox{for all}~x \in Q, \ |x-y|<2^{-\ell-1},~j > \ell +1.$$
       Hence, we take $0<\delta<2^{-\ell-1}$ such that
       $$M \int_{|x-y|<\delta} |x-y|^{\alpha+1} K_\sigma^\lambda(x,y) dy < \frac{\varepsilon}{3}, \quad x \in Q,$$
       and we repeat the preceding reasoning. Further details are omitted.
    \end{proof}

    \begin{Cor}\label{Thm:Calpha}
        Suppose that $\lambda\geq1$ when $0<\sigma<1/2$ and that $\lambda\geq2$ when $1/2\leq\sigma<1$.
        Let $u\in L_\sigma\cap C_+^\alpha$ when $0<2\sigma<\alpha < 1$ (or $u\in L_\sigma\cap \mathrm{Lip}_+$ if $\alpha=1$);
        or $u\in L_\sigma$, $u$ continuous on $[0,\infty)$ and
        $u'\in C^\alpha_+$ when $0 \leq 2\sigma-1<\alpha <1$ (resp. $u'\in\mathrm{Lip}_+$ if $\alpha=1$). Then,
        $$\Delta_\lambda^\sigma u (x)
            = \frac{1}{\G(-\sigma)}  \int_0^\infty \big(W_t^\lambda u(x)-u(x) \big) \frac{dt}{t^{1+\sigma}}, \quad x\in\R_+.$$
    \end{Cor}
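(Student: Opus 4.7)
The plan is to derive the spectral identity from the pointwise integro-differential formula of Theorem~\ref{Thm:Puntual} by unfolding the definitions $K_\sigma^\lambda(x,y)=-\Gamma(-\sigma)^{-1}\int_0^\infty W_t^\lambda(x,y)\,dt/t^{1+\sigma}$ and $B_\sigma^\lambda(x)=\Gamma(-\sigma)^{-1}\int_0^\infty(W_t^\lambda 1(x)-1)\,dt/t^{1+\sigma}$ and interchanging the orders of integration. Once the swap is justified, the terms $u(x)W_t^\lambda 1(x)$ produced by the $y$-integration of the principal part cancel against those coming from $u(x)B_\sigma^\lambda(x)$, leaving exactly $\Gamma(-\sigma)^{-1}\int_0^\infty(W_t^\lambda u(x)-u(x))\,dt/t^{1+\sigma}$.

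For $0<\sigma<1/2$ the Fubini swap is direct. The condition $\alpha>2\sigma$ together with \eqref{estimacion nucleo} and the $L_\sigma$ hypothesis ensure that $\int_0^\infty |u(x)-u(y)|K_\sigma^\lambda(x,y)\,dy<\infty$: near the diagonal by H\"older continuity, away from it by the $L_\sigma$ bound together with the H\"older growth $|u(x)|\le \|u\|_{C^\alpha_+}x^\alpha$. Since the integrand $|u(x)-u(y)|W_t^\lambda(x,y)/t^{1+\sigma}$ is nonnegative, Tonelli promotes this to absolute convergence of the $(y,t)$-double integral, and the same holds for the $B_\sigma^\lambda$ contribution by Lemma~\ref{Lem:funcion}. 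Fubini then yields the formula.

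For $1/2\le\sigma<1$ the pointwise formula is only a principal value and the direct Fubini swap fails. Instead, I would approximate $u$ by the mollified sequence $\{\phi_j\}\subset \mathcal{S}_\lambda$ constructed in the proof of Theorem~\ref{Thm:Puntual}, for which the identity is already known via \eqref{3.2} together with Proposition~\ref{Prop_representacion}. The left-hand side $\Delta_\lambda^\sigma\phi_j(x)\to\Delta_\lambda^\sigma u(x)$ by that same proof. For the right-hand side, exploit the hypothesis $u'\in C^\alpha_+$ with $1+\alpha>2\sigma$ together with Taylor's expansion of $u$ at $x$ and the off-diagonal decomposition \eqref{25.1} of $W_t^\lambda-\W_t$ to derive a bound of the form $|W_t^\lambda u(x)-u(x)|\le C(x)\min\{t^{(1+\alpha)/2},t^\sigma\}$, which is integrable against $dt/t^{1+\sigma}$. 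The same estimate, uniform in $j$, then holds for $\phi_j$ thanks to the uniform $C^{1,\alpha}$ and $L_\sigma$ control on compacts exploited in Theorem~\ref{Thm:Puntual}, and dominated convergence closes the argument.

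The main obstacle is securing this uniform integrable majorant in the second case, since the first-order Taylor term $u'(x)\int_0^\infty W_t^\lambda(x,y)(y-x)\,dy$ does not vanish when the integration is performed over $\R_+$ rather than $\R$; the expected $t^{(1+\alpha)/2}$ cancellation must therefore be recovered through the precise asymptotics \eqref{25.1} of $W_t^\lambda-\W_t$, combined with the truncated-linear correction from Remark~\ref{Cor_absint}.
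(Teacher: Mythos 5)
Your treatment of the case $0<\sigma<1/2$ is correct and coincides with the paper's: one checks absolute convergence of $\int_0^\infty|u(x)-u(y)|K_\sigma^\lambda(x,y)\,dy$ using \eqref{estimacion nucleo}, H\"older continuity near the diagonal, and the growth $|u(x)|\le \|u\|_{C^\alpha_+}x^\alpha$ together with the $L_\sigma$ condition far from it, and then applies Tonelli--Fubini.

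For $1/2\le\sigma<1$ your route diverges from the paper's and has a genuine gap. The dominating function you propose, $|W_t^\lambda u(x)-u(x)|\le C(x)\min\{t^{(1+\alpha)/2},t^\sigma\}$, is \emph{not} integrable against $dt/t^{1+\sigma}$: since $\sigma<(1+\alpha)/2$, for large $t$ the minimum equals $t^\sigma$ and $\int_1^\infty t^\sigma\, t^{-1-\sigma}\,dt=\int_1^\infty dt/t=\infty$, so the dominated-convergence step as stated does not close. (The correct large-$t$ control is simply boundedness, indeed decay, of $W_t^\lambda u(x)$, which for $\lambda\ge 2>1+\alpha$ follows from \eqref{W origen}; with that repair the tail is harmless.) More seriously, the small-$t$ bound $C(x)t^{(1+\alpha)/2}$ --- the heart of the matter, requiring the cancellation of the first-order Taylor term against the one-sided kernel --- is exactly what you defer to your final paragraph as ``the main obstacle'' without establishing it, so the second case is not actually proved. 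The paper avoids both difficulties: it does not re-run the approximation by $\phi_j$, but starts from the absolutely convergent Taylor-corrected representation of Remark~\ref{Cor_absint}, already valid for $u$ by the proof of Theorem~\ref{Thm:Puntual}, applies Fubini separately to the integral of $u(x)-u(y)-u'(x)(x-y)\chi_{\{|x-y|\le 1\}}$ against $K_\sigma^\lambda$, to the $B_\sigma^\lambda$ term, and to the $C_\sigma^\lambda$ term (the last for each fixed $\varepsilon>0$ before letting $\varepsilon\to 0^+$), and then recombines the three unfolded double integrals into \eqref{3.2}. If you wish to keep your approximation scheme you must both correct the tail majorant and actually prove the $t^{(1+\alpha)/2}$ estimate near $t=0$ uniformly in $j$; the paper's route requires neither.
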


    \begin{proof}
        Assume firstly that $0<\sigma<1/2$. By Theorem~\ref{Thm:Puntual} it is enough to write down the kernel $K^\lambda_\sigma(x,y)$ in terms of $W_t^\lambda(x,y)$ and to show that we can apply
        Fubini's theorem in the first integral of \eqref{10.1}. Indeed, note that
        \begin{align*}
            & \int_0^\infty |u(x)-u(y)| K_\sigma^\lambda(x,y)\,dy
                \leq C \Big( \int_{|x-y|<1} + \int_{0, \ |x-y| >1}^{2x} + \int_{2x, \ |x-y| >1}^{\infty}  \Big)\frac{|u(x)-u(y)|}{|x-y|^{1+2\sigma}} \,dy \\
            & \qquad \qquad \leq C \int_{|x-y|<1} \frac{dy}{|x-y|^{1+2\sigma-\alpha}} +
            C x |u(x)| + C \|u\|_{L_\sigma}    <\infty.
        \end{align*}

        Now, we consider the case $1/2\leq\sigma<1$. From the proof of Theorem~\ref{Thm:Puntual} we deduce that
        \begin{align*}
            \Delta_\lambda^\sigma u (x)
                =&\int_0^\infty \big(u(x)-u(y)-u'(x)(x-y)\chi_{\{|x-y|\leq 1\}}(x,y)\big)K_\sigma^\lambda(x,y)\, dy  \\
                & + u(x)B_\sigma^\lambda(x) + u'(x)C_\sigma^\lambda(x), \quad x \in \R_+.
        \end{align*}
        Once again, by using the mean value theorem, we can see that the integral related to the kernel $K_\sigma^\lambda$ is absolutely convergent.
        Thus, $\Delta_\lambda^\sigma u (x)$ equals to
        \begin{align*}
                & \lim_{\varepsilon \to 0^+} \frac{1}{-\Gamma(-\sigma)} \Big[ \int_0^\infty \int_0^\infty \big(u(x)-u(y)-u'(x)(x-y)\chi_{\{\varepsilon \leq |x-y|\leq 1\}}(x,y)\big)W_t^\lambda(x,y)\, \frac{dy\,dt}{t^{1+\sigma}}  \\
                & - \int_0^\infty \big(u(x)W_t^\lambda 1(x) - u(x)\big) \frac{dt}{t^{1+\sigma}} + \int_0^\infty \int_0^\infty u'(x)(x-y)\chi_{\{\varepsilon \leq |x-y|\leq 1\}}(x,y) W_t^\lambda(x,y)\, \frac{dy\,dt}{t^{1+\sigma}} \Big] \\
                &=  \frac{1}{\G(-\sigma)}  \int_0^\infty \big(W_t^\lambda u(x)-u(x) \big) \frac{dt}{t^{1+\sigma}}.
        \end{align*}
        Note that the interchange in the order of integration in the integral defined by $C_\sigma^\lambda$ is legitimate
        for every $\varepsilon>0$.
    \end{proof}

Next we pass to the study of the pointwise limits for the fractional Bessel operator.
By \eqref{spectral} and \eqref{def frac}, it is clear that if $\lambda>0$ and $\phi \in \mathcal{S}_\lambda$, then for all $x \in \R_+$,
    $$\lim_{\sigma \to 0^+} \Delta_\lambda^\sigma \phi(x)= \phi(x)
    \quad \text{and} \quad
    \lim_{\sigma \to 1^-} \Delta_\lambda^\sigma \phi(x)= \Delta_\lambda \phi(x).$$
    We now extend these properties to a larger class of functions.

    \begin{Cor}\label{Prop sigma1}
Let $u\in L^\infty(\R_+)$.
        \begin{itemize}
            \item[$(i)$] If $\lambda \geq 1$ and $u \in C^\alpha_+\cap L_0$, for some $0<\alpha<1$, then
            $$\lim_{\sigma \to 0^+} \Delta_\lambda^\sigma u(x)
                = u(x), \quad\hbox{for all}~x\in\R_+.$$
            \item[$(ii)$] If $\lambda \geq 2$, $u\in C^2(\R_+)$ and $u' \in L^\infty(\R_+)$, then
            $$\lim_{\sigma \to 1^-} \Delta_\lambda^\sigma u(x)
                = \Delta_\lambda u(x), \quad\hbox{for all}~x\in\R_+.$$
        \end{itemize}
    \end{Cor}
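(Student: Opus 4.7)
The plan is to obtain both limits from the heat-semigroup representation
\[
\Delta_\lambda^\sigma u(x)=\frac{1}{\Gamma(-\sigma)}\int_0^\infty\big(W_t^\lambda u(x)-u(x)\big)\,\frac{dt}{t^{1+\sigma}},
\]
given by Corollary~\ref{Thm:Calpha}, combined with the asymptotics $1/\Gamma(-\sigma)=-\sigma/\Gamma(1-\sigma)\sim-\sigma$ as $\sigma\to 0^+$ and $1/\Gamma(-\sigma)=\sigma(\sigma-1)/\Gamma(2-\sigma)\sim-(1-\sigma)$ as $\sigma\to 1^-$. In each case the $t$-integral will be split at $t=1$ and analyzed by combining a small-$t$ Taylor-type estimate with a concentration-of-measure argument on the remaining piece.

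For part $(i)$ I decompose
\[
\Delta_\lambda^\sigma u(x)=\frac{-\sigma}{\Gamma(1-\sigma)}\bigg[\int_0^1\!\big(W_t^\lambda u(x)-u(x)\big)\frac{dt}{t^{1+\sigma}}+\int_1^\infty\! W_t^\lambda u(x)\frac{dt}{t^{1+\sigma}}-\frac{u(x)}{\sigma}\bigg].
\]
The last summand together with the prefactor gives $u(x)/\Gamma(1-\sigma)\to u(x)$. The $\int_0^1$ piece is uniformly bounded in $\sigma$ because the H\"older hypothesis on $u$, the Gaussian domination $W_t^\lambda(x,y)\leq C\W_t(x-y)$, and a Lemma~\ref{Lem:funcion}-style estimate $|W_t^\lambda 1(x)-1|\leq Ct$ for small $t$ imply $|W_t^\lambda u(x)-u(x)|\leq Ct^{\alpha/2}$, and the $-\sigma$ prefactor kills this bounded integral. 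For the remaining piece I will prove that $W_t^\lambda u(x)\to 0$ as $t\to\infty$ by splitting the $y$-integral into the three ranges $y\leq\sqrt t$, $\sqrt t\leq y\leq 2\sqrt t$ and $y\geq 2\sqrt t$, exploiting \eqref{W origen}--\eqref{W infinito} together with the hypotheses $u\in L^\infty\cap L_0$ and $\lambda\geq 1$ (needed to integrate the small-$y$ factor $(xy)^\lambda/t^{\lambda+1/2}$). Since $\sigma\int_1^\infty t^{-1-\sigma}\,dt=1$, a standard concentration argument then yields $\sigma\int_1^\infty W_t^\lambda u(x)\,t^{-1-\sigma}\,dt\to 0$.

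For part $(ii)$ I first derive the subordination identity
\[
\Delta_\lambda^\sigma u(x)=\frac{1}{\Gamma(1-\sigma)}\int_0^\infty W_t^\lambda\Delta_\lambda u(x)\,\frac{dt}{t^\sigma}
\]
from the spectral formula by integration by parts in $t$ (using $\partial_t W_t^\lambda u=-W_t^\lambda\Delta_\lambda u$), the boundary terms vanishing since $W_t^\lambda u(x)-u(x)=O(t)$ near $t=0$ and is bounded at infinity. The assumption $\lambda\geq 2$ guarantees that $W_t^\lambda\Delta_\lambda u(x)$ makes sense despite the $\lambda(\lambda-1)/y^2$ blow-up of $\Delta_\lambda u$ at $y=0$, since $W_t^\lambda(x,y)\sim (xy)^\lambda/t^{\lambda+1/2}$ there. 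Splitting at $t=1$: on $(0,1)$, continuity of $t\mapsto W_t^\lambda\Delta_\lambda u(x)$ at $t=0$ combined with the concentration of the probability measure $(1-\sigma)t^{-\sigma}\chi_{(0,1)}\,dt$ at $0$ yields the contribution $\Delta_\lambda u(x)/\Gamma(2-\sigma)\to \Delta_\lambda u(x)$; on $(1,\infty)$, the analytic-semigroup bound $|\partial_t W_t^\lambda u(x)|\leq C\|u\|_\infty/t$ makes the integral $O(1/\sigma)$, which is killed by the $O(1-\sigma)$ prefactor.

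The principal obstacles I expect are: in $(i)$, the decay $W_t^\lambda u(x)\to 0$ as $t\to\infty$ under only $u\in L^\infty\cap L_0$, which forces the careful three-region splitting above; and in $(ii)$, first justifying the spectral/subordination formula for $u\in C^2(\R_+)\cap L^\infty$ with $u'\in L^\infty$---a class not literally covered by Corollary~\ref{Thm:Calpha} since the weighted quantity $\sup_x x^{-\alpha}|u'(x)|$ need not be finite---which I plan to address through an approximation argument in the spirit of the proof of Theorem~\ref{Thm:Puntual}, and second proving the analyticity bound $\|\partial_t W_t^\lambda u\|_{L^\infty(\R_+)}\leq C\|u\|_{L^\infty(\R_+)}/t$ via direct estimates on $\partial_t W_t^\lambda(x,y)=-\Delta_{\lambda,y}W_t^\lambda(x,y)$ starting from the explicit formula \eqref{heat kernel}.
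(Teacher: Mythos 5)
Your part $(i)$ is correct but follows a genuinely different route from the paper's. The paper does not split the $t$-integral at $t=1$; it subtracts the classical Gauss--Weierstrass semigroup, writes $\Delta_\lambda^\sigma u(x)=L_\sigma(u)(x)+(-d^2/dx^2)^\sigma u_0(x)$ with $u_0$ the extension of $u$ by zero, quotes the known pointwise limits for the one-dimensional fractional Laplacian from \cite{StiPhD,Stinga-Torrea}, and shows $L_\sigma(u)(x)\to 0$ as $\sigma\to 0^+$ because every piece of $L_\sigma$ is $O(1/|\G(-\sigma)|)$. Your argument --- the boundary term $u(x)/\G(1-\sigma)$, a $\sigma$-uniform bound on $\int_0^1$, and the escape to infinity of the measure $\sigma t^{-1-\sigma}\chi_{(1,\infty)}\,dt$ combined with $W_t^\lambda u(x)\to 0$ --- is self-contained and avoids citing external results; the decay of $W_t^\lambda u(x)$ under $u\in L^\infty(\R_+)\cap L_0$ does require the careful splitting you describe (a crude bound only gives $O(1)$), but it works. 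A minor point: $\lambda\geq 1$ is needed for the validity of the spectral formula \eqref{3.2} via Corollary~\ref{Thm:Calpha}, not to integrate $y^\lambda$ near the origin.

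Part $(ii)$ has a genuine gap. Your subordination identity rests on $\partial_t W_t^\lambda u(x)=-W_t^\lambda\Delta_\lambda u(x)$ and on $t\mapsto W_t^\lambda\Delta_\lambda u(x)$ being bounded on $(0,1)$ and continuous at $t=0$. But the hypotheses control $u''$ only locally: $u,u'\in L^\infty(\R_+)$ and $u\in C^2(\R_+)$ allow $u''$ to blow up arbitrarily fast at $x=0$ (take $u'(y)=\sin(f(y))$ with $f$ growing rapidly as $y\to 0^+$), in which case $\int_0^\varepsilon W_t^\lambda(x,y)|u''(y)|\,dy$ can diverge and $W_t^\lambda\Delta_\lambda u(x)$ is not even an absolutely convergent integral; $u''$ may also be unbounded at infinity. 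Thus the commutation of $\Delta_\lambda$ with the semigroup, the bound on $(0,1)$, and the convergence $W_t^\lambda\Delta_\lambda u(x)\to\Delta_\lambda u(x)$ are all unjustified, and the first can actually fail, for the stated class. The paper avoids forming $W_t^\lambda\Delta_\lambda u$ altogether: it again subtracts $\W_t$, cites \cite{Stinga-Torrea} for $\lim_{\sigma\to 1^-}(-d^2/dx^2)^\sigma u_0(x)=-u''(x)$ (which needs $u''$ only near the fixed point $x$), and then produces the potential term by proving $L_\sigma(u)(x)\to\lambda(\lambda-1)x^{-2}u(x)$ directly from the first-order term $-\lambda(\lambda-1)\frac{t}{xy}\W_t(x-y)$ in the kernel asymptotics \eqref{25.1}, via an explicit computation with $\G(\sigma-1/2)|x-y|^{1-2\sigma}$ concentrating at $y=x$. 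That extraction of $\lambda(\lambda-1)x^{-2}u(x)$ is the real content of the proof and is absent from your plan; if you wish to keep a semigroup argument you must leave the time derivative on the kernel and exploit cancellation of $\partial_t W_t^\lambda(x,y)$ against second differences of $u$ near $y=x$, which essentially amounts to redoing the paper's kernel analysis.
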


    \begin{proof}
        Suppose that $0< \sigma < 1$. Take $u$ satisfying the hypotheses \textit{(i)} or \textit{(ii)} above and fix $x \in \R_+$.
        Extend $u$ by $0$ on $(-\infty,0)$ and call this new function $u_0$.
        By Corollary~\ref{Thm:Calpha} we can split $\Delta_\lambda^\sigma u (x)$ as
        \begin{align*}
            \Delta_\lambda^\sigma u (x)
                &= \frac{1}{\G(-\sigma)} \int_0^\infty \big(W_t^\lambda u(x)- \W_tu_0(x)\big)\frac{dt}{t^{1+\sigma}}
                    + \frac{1}{\G(-\sigma)} \int_0^\infty \big(\W_tu_0(x)- u(x) \big)\frac{dt}{t^{1+\sigma}} \\
                &=  L_\sigma (u)(x) + \left( - \frac{d^2}{dx^2}\right)^\sigma u_0(x).
        \end{align*}
        According to \cite[Proposition 2.5]{StiPhD} and \cite[Proposition 5.3 and Remark 5.4]{Stinga-Torrea} we have that
        $$\lim_{\sigma \to 0^+} \left( - \frac{d^2}{dx^2}\right)^\sigma u_0(x)= u(x),
        \quad \hbox{or} \quad
        \lim_{\sigma \to 1^-} \left( - \frac{d^2}{dx^2}\right)^\sigma u_0(x)= -u''(x),$$
        depending if we are in case \textit{(i)} or in case \textit{(ii)}, respectively.
        Our next objectives are to show that
        \begin{equation}\label{obj1}
            \lim_{\sigma \to 0^+} L_\sigma u(x) = 0
            \quad \text{and} \quad
            \lim_{\sigma \to 1^-} L_\sigma u(x) = \frac{\lambda(\lambda-1)}{x^2}u(x),
        \end{equation}
        where the first limit is for \textit{(i)} and the second one for \textit{(ii)}.
        We write down the heat kernels and decompose $L_\sigma u(x)$ in three integrals
        $$\frac{1}{\G(-\sigma)}\int_0^\infty \Bigg( \int_0^{x/2} + \int_{x/2}^{3x/2} + \int_{3x/2}^\infty  \Bigg)
           \big(W_t^\lambda(x,y)-\W_t(x-y)\big)u(y)\, \frac{dy\,dt}{t^{1+\sigma}}=\sum_{i=1}^3L_{\sigma,i}(u)(x).$$
        By \eqref{control clasico} and \eqref{cuenta clasica},
        \begin{align}\label{4.10}
            |L_{\sigma,1}(u)(x)|
                &\leq \frac{C}{|\G(-\sigma)|}  \int_0^{x/2} \frac{|u(y)|}{|x-y|^{1+2\sigma}}\,dy
                \leq \frac{C}{|\G(-\sigma)| x^{2\sigma}} \|u\|_{L^\infty(\R_+)},
        \end{align}
        with $C$ independent of $\sigma$. Analogously,
        \begin{align}\label{4.11}
            |L_{\sigma,3}(u)(x)|
                & \leq  \frac{C}{|\G(-\sigma)|} \int_{3x/2}^\infty \frac{|u(y)|}{|x-y|^{1+2\sigma}}\, dy
                  \leq \frac{C}{|\G(-\sigma)|} \left(1 + \frac{1}{x} \right)^{1+2\sigma} \|u\|_{L_\sigma}.
        \end{align}
        We also split $L_{\sigma,2}$ as follows
        \begin{align*}
            L_{\sigma,2}(u)(x)
                &= \frac{1}{\G(-\sigma)} \int_{x/2}^{3x/2} \left(\int_{xy}^\infty + \int_0^{xy} \right)
                \left[ W_t^\lambda(x,y)- \W_t(x-y) \right]u(y) \,\frac{dt\,dy}{t^{1+\sigma}}\\
                &=  L_{\sigma,2,1}(u)(x) + L_{\sigma,2,2}(u)(x).
        \end{align*}
        By using again \eqref{control clasico} we obtain
        \begin{align}\label{4.12}
            \left| L_{\sigma,2,1}(u)(x) \right|
                &\leq \frac{C}{|\G(-\sigma)|} \int_{x/2}^{3x/2} \int_{xy}^\infty   \frac{e^{-|x-y|^2/(4t)}}{t^{\sigma+3/2}} |u(y)|\,dt\,dy
                \leq  \frac{C}{|\G(-\sigma)|} \int_{x/2}^{3x/2}  \frac{|u(y)|}{x^{2\sigma+1}} \,dy \nonumber \\
                &\leq \frac{C}{|\G(-\sigma)| x^{2\sigma}} \|u\|_{L^\infty(\R_+)}.
        \end{align}
         From \eqref{4.10}, \eqref{4.11} and \eqref{4.12} we deduce that, under the sole
assumption that $u\in L^\infty(\R_+)\cap L_0\subset L_\rho$, $\rho>0$,
        $$\lim_{\sigma \to 0^+, 1^-} \left[ L_{\sigma,1}(u)(x) + L_{\sigma,3}(u)(x) + L_{\sigma,2,1}(u)(x)\right]
            = 0,$$
        because $1/\Gamma(-\sigma)\sim \sigma(1-\sigma)$ when $0<\sigma<1$.

        In order to analyze $L_{\sigma,2,2}$ we take into account \eqref{25.1}. Observe that, if $0<\sigma < 1/2$, then
        \begin{align*}
            \left| L_{\sigma,2,2}(u)(x) \right|
                \leq & \frac{C}{|\G(-\sigma)|} \int_{x/2}^{3x/2} \int_0^{xy} \W_t(x-y) \frac{t}{xy} |u(y)|\,\frac{dt\,dy}{t^{1+\sigma}} \\
                \leq & \frac{C}{|\G(-\sigma)|} \int_{x/2}^{3x/2} \frac{1}{x^2} \int_0^{xy}  \frac{dt}{t^{1/2+\sigma}} |u(y)|\, dy
                \leq  \frac{C}{|\G(-\sigma)| x^{2\sigma}} \|u\|_{L^\infty(\R_+)}.
        \end{align*}
        Hence, $L_{\sigma,2,2}(u)(x) \to 0$, as $\sigma \to 0^+$, and the proof of $(i)$ is completed.

        On the other hand, to complete the proof of $(ii)$ we need to treat the operator $L_{\sigma,2,2}$ more carefully
        by considering the precise asymptotics in \eqref{25.1}. Suppose that $1/2 < \sigma < 1$. We can estimate
        \begin{align*}
            \frac{1}{|\G(-\sigma)|} \int_{x/2}^{3x/2} \int_0^{xy} \W_t(x-y) \frac{t^2}{(xy)^2}\, \frac{dt\,dy}{t^{1+\sigma}}
              &  \leq  \frac{C}{|\G(-\sigma)|} \int_{x/2}^{3x/2} \frac{1}{x^4} \int_0^{xy} t^{1/2-\sigma}\, dt\, dy\nonumber\\
&             \leq  \frac{C}{|\G(-\sigma)| x^{2\sigma}}.
        \end{align*}
        We also have that
        \begin{align*}
            & - \frac{\lambda(\lambda-1)}{\G(-\sigma)} \int_{x/2}^{3x/2} \int_0^{xy} \W_t(x-y) \frac{t}{xy} \,u(y)\,\frac{dt\,dy}{t^{1+\sigma}} \nonumber \\
            & \qquad = -\frac{\lambda(\lambda-1)}{\sqrt{4\pi}\G(-\sigma)} \int_{x/2}^{3x/2} \frac{1}{xy}
                    \left(\int_0^\infty \frac{e^{-|x-y|^2/(4t)}}{t^{\sigma+1/2}} \,dt -
                    \int_{xy}^\infty \frac{e^{-|x-y|^2/(4t)}}{t^{\sigma+1/2}}\, dt  \right) u(y)\, dy \nonumber\\
            & \qquad = -\frac{\lambda(\lambda-1)}{\sqrt{4\pi}\G(-\sigma)} \int_{x/2}^{3x/2} \frac{1}{xy}
                    \left( \frac{4^{\sigma-1/2}\G(\sigma-1/2)}{|x-y|^{2\sigma-1}} -
                    \int_{xy}^\infty \frac{e^{-|x-y|^2/(4t)}}{t^{\sigma+1/2}}\, dt  \right) u(y)\, dy.
        \end{align*}
        Since,
        $$ \frac{1}{|\G(-\sigma)|} \int_{x/2}^{3x/2} \frac{1}{xy} \int_{xy}^\infty \frac{e^{-|x-y|^2/(4t)}}{t^{\sigma+1/2}} \,dt\, dy
            \leq \frac{C}{|\G(-\sigma)|x^{2\sigma}},$$
        and $\lim_{\sigma\to1^-}(4\pi)^{-1/2}4^{\sigma-1/2}\Gamma(\sigma-1/2)=1$, we see that we have reduced \eqref{obj1}, for $\sigma \to 1^-$, to showing that
        \begin{equation*}
            \lim_{\sigma \to 1^-}
                \frac{1}{|\G(-\sigma)|} \int_{x/2}^{3x/2} \frac{1}{y}\frac{u(y)}{|x-y|^{2\sigma-1}}  \, dy
                    = \frac{u(x)}{x}.
        \end{equation*}
        Now we write the integral above as
        \begin{align*}
             &\frac{1}{|\G(-\sigma)|} \int_{x/2}^{3x/2} \Big(\frac{1}{y} - \frac{1}{x} \Big)\frac{u(y)}{|x-y|^{2\sigma-1}}\,dy   + \frac{1}{|\G(-\sigma)|} \int_{x/2}^{3x/2} \frac{1}{x}\frac{u(y)-u(x)}{|x-y|^{2\sigma-1}}\,dy\\
&               \qquad   + \frac{1}{|\G(-\sigma)|} \int_{x/2}^{3x/2} \frac{1}{x}\frac{u(x)}{|x-y|^{2\sigma-1}}\,dy.
        \end{align*}
        The first two terms above converge to $0$ as $\sigma \to 1^-$, because
        $$ \int_{x/2}^{3x/2} \left| \frac{1}{y} - \frac{1}{x} \right| \frac{dy}{|x-y|^{2\sigma-1}}
            \leq \frac{C}{x^2} \int_{x/2}^{3x/2}  \frac{dy}{|x-y|^{2\sigma-2}}
            \leq \frac{C}{x^{2\sigma-1}},$$
        and
        $$ \frac{1}{x}\int_{x/2}^{3x/2}  \frac{|u(y)-u(x)|}{|x-y|^{2\sigma-1}}\,dy
            \leq C\frac{\|u'\|_{L^\infty(\R_+)}}{x} \int_{x/2}^{3x/2}  \frac{dy}{|x-y|^{2\sigma-2}}
            \leq \frac{C}{x^{2\sigma-2}}.$$
        Finally, since $\Gamma(z+1)=z\Gamma(z)$,
        \begin{align*}
            \lim_{\sigma \to 1^-} \frac{1}{|\G(-\sigma)|} \int_{x/2}^{3x/2} \frac{1}{x}\frac{u(x)}{|x-y|^{2\sigma-1}}\,dy
                = \frac{u(x)}{x } \lim_{\sigma \to 1^-} \frac{1}{|\G(-\sigma)| (1-\sigma) } \left( \frac{x}{2} \right)^{2-2\sigma}
                = \frac{u(x)}{x}.
        \end{align*}
    \end{proof}

     \section{The $\Delta_\lambda$-harmonic extension for functions in $C^\alpha_+$}
\label{sec:Poisson}

\begin{Prop}\label{thm:harmonic extension}
Let $\lambda>0$ and $f\in L_{\lambda/2}$. Assume also that $f$ is in $C^\alpha_+$ for some $0<\alpha<1$.
Then the Bessel harmonic extension of $f$, namely, the solution to
    \eqref{Bessel harmonic extension} is given by
   \begin{equation}\label{subord}
    v(x,t)\equiv P_t^\lambda f(x)=\frac{t}{2\sqrt{\pi}} \int_0^\infty \frac{e^{-t^2/(4s)}}{s^{3/2}} W_s^\lambda f(x)\,ds.
    \end{equation}
    If we define the Poisson kernel for $x,y\in\R_+$, $t>0$, as
 \begin{equation}\label{subord kernel}
    P_t^\lambda(x,y)
            = \frac{t}{2\sqrt{\pi}}\int_0^\infty \frac{e^{-t^2/(4s)}}{s^{3/2}} W_s^\lambda(x,y)\,ds
 =\frac{1}{\sqrt{\pi}}\int_0^\infty \frac{e^{-r}}{r^{1/2}}W_{t^2/(4r)}^\lambda(x,y)\,dr,
    \end{equation}
then we can write the classical solution to \eqref{Bessel harmonic extension} as
\begin{equation}\label{definicion de v}
v(x,t)=\int_{\R_+}P_t^\lambda(x,y)f(y)\,dy,\quad x\in\R_+,~t>0.
\end{equation}
\end{Prop}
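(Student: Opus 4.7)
The statement contains three items to verify: (i) the subordination integral \eqref{subord} is absolutely convergent and equals the kernel representation \eqref{definicion de v}; (ii) the resulting function $v(x,t)$ is a classical solution of the Bessel harmonic extension equation $v_{tt}-\Delta_\lambda v=0$ on $\R_+\times(0,\infty)$; and (iii) the boundary condition $v(x,0^+)=f(x)$ holds pointwise on $\R_+$. I would handle them in this order.

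For (i), the first task is to produce pointwise estimates for $P_t^\lambda(x,y)$ from \eqref{heat kernel}--\eqref{W infinito}. Splitting the $s$-integral in \eqref{subord kernel} at $s\sim xy/2$ and inserting the asymptotics \eqref{W origen} for $s>xy/2$ and \eqref{W infinito} for $s\leq xy/2$, one finds a bound of the form $P_t^\lambda(x,y)\leq C(x,t)(1+y)^{-(1+\lambda)}$ for $y$ large, together with the classical Gaussian dominance $P_t^\lambda(x,y)\leq C\, t/(t^2+|x-y|^2)$ coming from \eqref{control clasico} and the standard subordination identity for the half-plane Poisson kernel. These bounds exactly match the weight $(1+y)^{1+\lambda}$ in $L_{\lambda/2}$, so $\int_0^\infty P_t^\lambda(x,y)|f(y)|\,dy<\infty$ for every $(x,t)\in\R_+\times(0,\infty)$, and Fubini's theorem applied to the double integral in $(s,y)$ identifies \eqref{subord} with \eqref{definicion de v}.

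For (ii), I would differentiate under the integral in \eqref{subord}. The kernel $g_t(s)=\frac{t}{2\sqrt{\pi}\,s^{3/2}}e^{-t^2/(4s)}$ satisfies the one-variable identity $\partial_t^2 g_t(s)=\partial_s g_t(s)$, which is the analytic heart of Bochner subordination. Combined with the Bessel heat equation $\partial_s W_s^\lambda f=-\Delta_\lambda W_s^\lambda f$ and an integration by parts in $s$ with vanishing boundary terms at $s=0^+$ and $s=\infty$, one obtains formally $v_{tt}(x,t)=\int_0^\infty g_t(s)\,\Delta_\lambda W_s^\lambda f(x)\,ds=\Delta_\lambda v(x,t)$. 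Rigorously justifying the differentiations requires locally uniform bounds (in $x$ on compact subintervals of $\R_+$ and in $t$ away from $0$) for $W_s^\lambda f$ and its spatial derivatives up to order two; these follow from the explicit kernel \eqref{heat kernel} together with the asymptotics \eqref{I origen}--\eqref{I derivada}, using $f\in L_{\lambda/2}$ to control growth in $y$.

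For (iii), the H\"older hypothesis enters: we need $P_t^\lambda(x,y)$ to act as an approximation of the identity as $t\to0^+$. The mass identity $P_t^\lambda 1(x)\to1$ follows from $W_s^\lambda 1(x)\to1$ as $s\to0^+$ (using \eqref{W infinito}) and dominated convergence in \eqref{subord}. Splitting $v(x,t)-f(x)P_t^\lambda 1(x)=\int_{|y-x|<\delta}+\int_{|y-x|\geq\delta}$, the local integral is bounded by $\|f\|_{C^\alpha_+}\int|x-y|^\alpha P_t^\lambda(x,y)\,dy\to0$ thanks to the H\"older regularity and the classical scaling of the Poisson kernel, while the tail is controlled by the off-diagonal decay $P_t^\lambda(x,y)\lesssim t(1+y)^{-(1+\lambda)}$ paired with $f\in L_{\lambda/2}$, and vanishes as $t\to0^+$.

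The main obstacle is step (ii): justifying $\partial_t^2$ and $\Delta_\lambda$ under the integral requires estimates with respect to \emph{three} parameters ($s$, $x$ locally, and $y$ globally), and one must separately handle small $s$ (kernel concentration near $y=x$, where \eqref{W infinito} applies) and large $s$ (where the factor $e^{-t^2/(4s)}/s^{3/2}$ must win against the growth of $f$ allowed by $L_{\lambda/2}$ and any derivatives picked up from differentiating the Bessel function). These technical estimates ultimately reduce to the asymptotic relations \eqref{I origen}--\eqref{25.1}, but must be carried out carefully to obtain domination by an $L^1$ function in $(s,y)$ uniformly on compact sets of $(x,t)\in\R_+\times(0,\infty)$.
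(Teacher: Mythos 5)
Your outline is correct, but it diverges from the paper's proof at the two substantive steps. For the equation, the paper does not integrate by parts in the subordination variable against $W_s^\lambda f$: it takes the kernel identity $\partial_{tt}P_t^\lambda(x,y)=(\Delta_\lambda)_xP_t^\lambda(x,y)$ as routine and spends its effort on passing $\partial_t^2$ and $\partial_x^2$ through the $y$-integral, which it gets from Corollary \ref{Lem permuta} and from estimate \eqref{F2} with $\beta=0$ together with the symmetry $P_t^\lambda(x,y)=P_t^\lambda(y,x)$ --- bounds it must prove anyway for Theorem \ref{Th1.1}. Your route via $\partial_t^2g_t(s)=\partial_sg_t(s)$ and $\partial_sW_s^\lambda f=-\Delta_\lambda W_s^\lambda f$ is the standard way to establish that kernel identity, but, as you concede, it obliges you to dominate $\partial_xW_s^\lambda(x,y)$ and $\partial_x^2W_s^\lambda(x,y)$ integrated against $f\in L_{\lambda/2}$ jointly in $(s,y)$, estimates the paper never has to write down. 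For the boundary data the difference is genuine: the paper subtracts the classical Poisson kernel $\mathbb{P}_t(x-y)$, invokes the classical convergence for the zero-extension $f_0\in C^\alpha(\R)$ (which is H\"older precisely because the term $\sup_x x^{-\alpha}|f(x)|$ in the $C^\alpha_+$ norm forces $f(0^+)=0$), and bounds the remainder kernel via \eqref{25.1} to get the quantitative rate $C(tx^{\alpha-1}+t^{1/2}x^{\alpha-1/2})$. Your approximation-of-the-identity argument is self-contained and equally valid, but it must separately establish $P_t^\lambda1(x)\to1$ as $t\to0^+$, since constants are not preserved by $W_t^\lambda$ or $P_t^\lambda$; you correctly flag this, and your subordination-plus-dominated-convergence argument for it (using $W_u^\lambda1(x)\le C$ from \eqref{control clasico}) is sound, as are the tail estimate from \eqref{F0''} against the $L_{\lambda/2}$ weight and the local estimate $\int|x-y|^\alpha P_t^\lambda(x,y)\,dy\le Ct^\alpha$ from \eqref{tiza}.
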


{Let us introduce the concept of fractional derivative as defined by C. Segovia and R. L. Wheeden
    in \cite{SW}.
    Let $\beta>0$. We choose $m \in \N$ such that $m-1 \leq \beta <m$. Suppose that $F(x,t)$
    is a nice enough function on $\R_+^2$. Then, we define \cite[p.~248]{SW}
    \begin{equation}\label{fractional derivative}
    \partial_t^\beta F(x,t)
        = \frac{e^{-i\pi(m-\beta)}}{\G(m-\beta)} \int_0^\infty \partial_t^m F(x,t+s)s^{m-\beta-1}ds,\quad x\in\R_+,t>0.
     \end{equation}
    This fractional derivative was used in \cite{SW} by Segovia and Wheeden to
    define Littlewood--Paley functions that characterize fractional Sobolev spaces.
    In other contexts, the $\partial_t^\beta$  operator is useful to prove that potential
    spaces associated to orthogonal
    expansions coincide with the corresponding Sobolev spaces, even in
    the vector valued situation, see for example
    \cite{BCCFR1, Jorge, Sanabria}. Also, in \cite{MSTZ1} this fractional derivative was used to
    characterize H\"older spaces in the Schr\"odinger setting.}

Before proving this result we collect some
estimates for the Poisson kernel \eqref{subord kernel} that will be useful later.

    \begin{Lem}\label{Lem fractPoiss}
        Let $\lambda,\beta>0$. Recall that $\widetilde{\lambda}=\min\{\lambda,1\}$. Then, for all $x,y\in\R_+$ and $t>0$,
        \begin{equation}\label{F0}
             P_t^\lambda(x,y)
                \leq C \frac{t}{(t+|x-y|)^2} \left( \frac{x \wedge y}{t+|x-y|} \wedge 1 \right)^\lambda,
        \end{equation}
        \begin{equation}\label{F0''}
             P_t^\lambda(x,y)
                \leq C \frac{t(xy)^\lambda}{(t+|x-y|)^{2(\lambda+1)}},
        \end{equation}
        \begin{equation}\label{F1}
            \left| \partial_t^\beta P_t^\lambda(x,y) \right|
                \leq C \frac{y^{\tlambda}}{(t+|x-y|)^{\tlambda+\beta+1}},
        \end{equation}
        \begin{equation}\label{F1'}
            \left| \partial_t^\beta P_t^\lambda(x,y) \right|
                \leq C \frac{y^{\lambda}}{(t+|x-y|)^{\lambda+\beta+1}},
        \end{equation}
        \begin{equation}\label{F1''}
            \left| \partial_t^\beta P_t^\lambda(x,y) \right|
                \leq C \frac{1}{(t+|x-y|)^{\beta+1}},
        \end{equation}
        \begin{equation}\label{F0'}
            \left| \partial_t^\beta P_t^\lambda(x,y) \right|
                \leq C \frac{1}{(t+|x-y|)^{\beta+1}} \left( \frac{x \wedge y}{t+|x-y|} \wedge 1 \right)^\lambda,
        \end{equation}
        and
        \begin{equation}\label{F2}
            \left| \partial_y \partial_t^\beta P_t^\lambda(x,y) \right|
                \leq C \left(\frac{y^{\tlambda-1}}{(t+|x-y|)^{\tlambda+\beta+1}} + \frac{1}{(t+|x-y|)^{\beta+2}}\right).
        \end{equation}
    \end{Lem}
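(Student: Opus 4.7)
The plan is to derive every estimate in the lemma from the subordination formula \eqref{subord kernel},
$$P_t^\lambda(x,y)=\int_0^\infty q_t(s)\, W_s^\lambda(x,y)\,ds,\qquad q_t(s)=\frac{t\,e^{-t^2/(4s)}}{2\sqrt{\pi}\,s^{3/2}},$$
combined with the pointwise bounds on $W_s^\lambda$ collected in Section~\ref{sec:positive}. Two complementary bounds will do most of the work: the Gaussian bound \eqref{control clasico} and a ``Dirichlet-type'' bound
$$W_s^\lambda(x,y)\leq C\,\frac{(xy)^\lambda}{s^{\lambda+1/2}}\,e^{-(x-y)^2/(cs)},\qquad x,y,s>0,$$
which follows from \eqref{W origen} in the regime $xy<2s$ (after replacing $e^{-(x^2+y^2)/(4s)}$ by $e^{-(x-y)^2/(4s)}$) and from \eqref{control clasico} in the regime $xy\geq 2s$ by inserting the trivial factor $(xy/s)^\lambda\geq 2^\lambda$. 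Absorbing one of these factors via the elementary inequality $z^\mu e^{-z^2/(cs)}\leq Cs^{\mu/2}$ produces the asymmetric variants involving only $y^\lambda$ or only $y^{\tlambda}$, with $\tlambda=\min\{\lambda,1\}$ entering exactly when $\lambda>1$ forces one to cap the absorption at a single power.

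For \eqref{F0} and \eqref{F0''} I would plug these bounds into the subordination integral; the substitutions $s=t^2/(4r)$ and $s=|x-y|^2/(4r)$ reduce the $s$-integrals to elementary gamma integrals producing the Poisson decay $t/(t+|x-y|)^2$. The prefactor $(x\wedge y)^\lambda$ in \eqref{F0} is then converted into $((x\wedge y)/(t+|x-y|)\wedge 1)^\lambda$ by comparing $x\wedge y$ with $t+|x-y|$: in the regime $x\wedge y\geq t+|x-y|$ the Gaussian bound \eqref{control clasico} alone suffices, while in the complementary regime the Dirichlet-type bound is sharper.

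For the derivative estimates \eqref{F1}--\eqref{F0'}, I would differentiate $q_t(s)$ explicitly: for integer $m\geq 0$ one has $\partial_t^m q_t(s)=s^{-(m+3)/2}\,p_m(t/\sqrt{s})\,e^{-t^2/(4s)}$ with $p_m$ a polynomial. Substituting into the Segovia--Wheeden formula \eqref{fractional derivative} with $m=\lceil\beta\rceil$ and changing variables $u=\sqrt{s}\,\tau$ in the resulting integral gives
$$|\partial_t^\beta q_t(s)|\leq C\,s^{-(\beta+3)/2}\,e^{-t^2/(cs)},\qquad s,t>0.$$
Inserting this into the subordination integral and repeating the Bochner computation yields each of \eqref{F1''}, \eqref{F1'}, \eqref{F1}, \eqref{F0'}, with each extra half-order of $\partial_t$ generating one extra factor of $(t+|x-y|)^{-1}$ in the denominator and the $y^\lambda$ or $y^{\tlambda}$ prefactors being inherited from the respective heat-kernel bound.

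Finally, for \eqref{F2}, I would apply $\partial_y$ to the subordination formula and use the derivative identity \eqref{I derivada} to write $\partial_y W_s^\lambda(x,y)$ as a $W_s^{\lambda+1}$-type term (from differentiating $(xy)^\lambda$) plus a Gaussian term with an extra factor $1/s$ (from differentiating $e^{-y^2/(4s)}$). The first produces the $y^{\tlambda-1}/(t+|x-y|)^{\tlambda+\beta+1}$ contribution after the same Bochner estimation, and the second produces the $(t+|x-y|)^{-\beta-2}$ contribution, the extra factor arising from the $1/s$ exactly as from an additional $t$-derivative. The main obstacle throughout is the case analysis: one must track the two Bessel-function regimes $xy\lessgtr s$, select the sharper bound in each parameter range, and verify that the estimates glue seamlessly across the cross-over point $s\sim xy$ so that the final decay rates are uniform in all parameters.
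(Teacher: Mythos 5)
Your strategy is the paper's: subordinate, bound $\partial_t^\beta$ of the subordination density, and feed in the two heat--kernel regimes. But two steps are stated incorrectly and one key idea is missing. First, the density derivative: writing $q_t(s)=Cs^{-3/2}\cdot 2\sqrt{s}\,re^{-r^2}$ with $r=t/(2\sqrt{s})$ and $\partial_t=(2\sqrt{s})^{-1}\partial_r$ gives $\partial_t^m q_t(s)=s^{-(m+2)/2}p_m(t/\sqrt{s})e^{-t^2/(4s)}$, not $s^{-(m+3)/2}$; consequently your claimed bound $|\partial_t^\beta q_t(s)|\leq Cs^{-(\beta+3)/2}e^{-t^2/(cs)}$ is actually \emph{false} for large $s$ (at fixed $t$ one has $|\partial_t^\beta q_t(s)|\sim cs^{-(\beta+2)/2}$), and if used it would output $(t+|x-y|)^{-\beta-2}$ in place of the correct $(t+|x-y|)^{-\beta-1}$ in \eqref{F1''}. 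The correct statement, $|\partial_t^\beta q_t(s)|\leq Cs^{-(\beta+2)/2}e^{-t^2/(8s)}$, is exactly \eqref{beta lema} divided by $s^{3/2}$, which is what the paper uses. Second, your unified Dirichlet-type bound retains only $e^{-(x-y)^2/(cs)}$, so the advertised absorption $x^\lambda e^{-x^2/(cs)}\leq Cs^{\lambda/2}$ is not available from it; integrating it directly yields $(xy)^\lambda(t+|x-y|)^{-2\lambda-\beta-1}$, and converting $x^\lambda$ into $(t+|x-y|)^\lambda$ fails when $x\approx y$ are both large. This is repairable: the split on $x\wedge y\lessgtr t+|x-y|$ that you describe for \eqref{F0} does produce \eqref{F0'}, which in turn implies \eqref{F1}, \eqref{F1'} and \eqref{F1''}; alternatively the paper inserts only the partial power $(xy/u)^{\tlambda/2}$ when $x<2y$ and the full power when $x\geq 2y$. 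As written, though, the mechanism does not close.

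The genuine gap is in \eqref{F2}. By \eqref{I derivada}, $\partial_y W_s^\lambda(x,y)=\tfrac{\lambda}{y}W_s^\lambda(x,y)-\tfrac{y}{2s}W_s^\lambda(x,y)+\tfrac{x}{2s}W_s^{\lambda+1}(x,y)$, and you propose to estimate the last two pieces separately. In the regime $xy\geq 2s$ with $x\approx y$ large, each of those pieces is of size $\tfrac{y}{s}\cdot s^{-1/2}e^{-|x-y|^2/(4s)}$ and there is no Gaussian decay in $y$ left to absorb the stray factor $y$; after subordination each piece alone contributes about $y(t+|x-y|)^{-\beta-3}$, which is \emph{not} dominated by the right-hand side of \eqref{F2} (take $x=y\to\infty$ with $t$ fixed). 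The proof must pair these two terms and exploit the cancellation coming from the common leading term of \eqref{I infinito} for $I_{\lambda+1/2}$ and $I_{\lambda-1/2}$, namely
\begin{equation*}
\big|xW_s^{\lambda+1}(x,y)-yW_s^\lambda(x,y)\big|\leq C\,\frac{e^{-|x-y|^2/(4s)}}{\sqrt{s}}\Big(|x-y|+\frac{s}{x}+\frac{s}{y}\Big),\qquad xy\geq 2s,
\end{equation*}
which replaces the unmanageable factor $x+y$ by $|x-y|+s/x+s/y$. Without this cancellation \eqref{F2} is out of reach, and your outline gives no indication of it.
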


    \begin{proof}
    We show how to prove \eqref{F1} and \eqref{F2} in detail. The rest of the estimates will follow in a similar way
    and the computations are left to the interested reader.
    We need the following estimate that was proved in \cite[Lemma 3]{BCCFR1}:
    for any $\beta>0$, there exists a constant $C$ such that for all $t,u>0$,
    \begin{equation}\label{beta lema}
    |\partial_t^\beta (te^{-t^2/(4u)})|\leq Ce^{-t^2/(8u)}u^{(1-\beta)/2}.
    \end{equation}
    Let $m$ be an integer with $m-1 \leq \beta < m$.
    By \eqref{subord kernel} and the definition of $\partial_t^\beta$ in \eqref{fractional derivative} we have,
    for any $x,y\in\R_+$ and $t>0$,
        \begin{align*}
            \partial_t^\beta P_t^\lambda(x,y)
                & =  C\int_0^\infty \int_0^\infty
                    \partial_t^m\left[ (t+s)e^{-(t+s)^2/(4u)} \right] W_u^\lambda(x,y)\frac{ du }{{u^{3/2}} }s^{m-\beta-1}ds\\
                & =  C\int_0^\infty
                    \partial_t^\beta\left[ te^{-t^2/(4u)} \right] W_u^\lambda(x,y)\frac{ du}{u^{3/2}}.
        \end{align*}
    The differentiation under the integral sign and the interchange of the order of integration is correct because,
    according to \eqref{beta lema} and \eqref{control clasico}, we have that
        \begin{align*}
            \int_0^\infty &\int_0^\infty  \left|\partial_t^m\left[ (t+s)e^{-(t+s)^2/4u} \right]\right|
            W_u^\lambda(x,y) \,\frac{du}{u^{3/2}} s^{m-\beta-1}\,ds \\
               & \leq  C \int_0^\infty s^{m-\beta-1} \int_0^\infty  \frac{e^{-[(t+s)^2+|x-y|^2]/8u}}{u^{(m+3)/2}}\, du \,ds
                \leq  C \int_0^\infty \frac{s^{m-\beta-1}}{[(t+s)^2+|x-y|^2]^{(m+1)/2}} \,ds \\
                &\leq  C \frac{t^{-m}+1}{t+|x-y|}, \quad x,y \in \R_+, \ t>0.
        \end{align*}

        Using \eqref{beta lema}, \eqref{W origen} and \eqref{W infinito},
\begin{equation}\label{asterisco}
\begin{aligned}
         \left| \partial_t^\beta P_t^\lambda(x,y) \right|
                &\leq   C \int_0^\infty \frac{e^{-t^2/(8u)}}{u^{(\beta+2)/2}} W_u^\lambda(x,y)\, du \\
                &\leq  C \left( \int_0^{xy/2} \frac{e^{-(t^2+|x-y|^2)/(8u)}}{u^{(\beta+3)/2}}  du
                            + \int_{xy/2}^\infty \frac{e^{-(t^2+x^2+y^2)/(8u)}}{u^{(\beta+3)/2}} \left( \frac{xy}{u} \right)^\tlambda  du \right).
\end{aligned}
\end{equation}
        It is not hard to see that the second integral above is bounded by
        \begin{equation}\label{first}
         \frac{(xy)^\tlambda}{(t^2+x^2+y^2)^{\tlambda + (\beta+1)/2}}
            \leq C \frac{y^\tlambda}{(t +|x-y|)^{\tlambda + \beta+1}}.
            \end{equation}
       As for the first one, if $x<2y$ then the bound we get is
\begin{equation}\label{asterisco2}
        \begin{aligned}
            \int_0^{xy/2} \frac{e^{-(t^2+|x-y|^2)/8u}}{u^{(\beta+3)/2}}  \left( \frac{xy}{u} \right)^{\tlambda/2} du
                &\leq C \frac{(xy)^{\tlambda/2}}{(t^2+|x-y|^2)^{(\tlambda + \beta+1)/2}} \\
                &\leq C \frac{y^\tlambda}{(t +|x-y|)^{\tlambda + \beta+1}},
        \end{aligned}
\end{equation}
        while when $x\geq 2y$ the bound becomes
\begin{equation}\label{asterisco3}
        \begin{aligned}
            \int_0^{xy/2} \frac{e^{-(t^2+|x-y|^2)/8u}}{u^{(\beta+3)/2}}&  \left( \frac{xy}{u} \right)^{\tlambda} du
                \leq C \frac{(xy)^{\tlambda}}{(t^2+|x-y|^2)^{\tlambda + (\beta+1)/2}} \\
                &\leq  C \frac{(xy)^\tlambda}{(t+x)^\tlambda(t +|x-y|)^{\tlambda + \beta+1}}
                \leq   C \frac{y^\tlambda}{(t +|x-y|)^{\tlambda + \beta+1}}.
        \end{aligned}
\end{equation}
        Hence, \eqref{F1} is established.

        Observe that \eqref{F1'} follows in the same way as \eqref{F1} by keeping $\lambda$
        in the estimate \eqref{asterisco} instead of $\tlambda$.
        Also, \eqref{F1''} is obtained by using that $(xy)^{\tlambda}\leq C(x^2+y^2)^{\tlambda}$
        in \eqref{first} and without adding the factor $(xy/u)^{\tlambda/2}$ or $(xy/u)^\tlambda$
        in \eqref{asterisco2} and \eqref{asterisco3}.  Now notice that,
by the symmetry of the kernel $P_t^\lambda(x,y)=P_t^\lambda(y,x)$,
we could replace $y^\lambda$ by $x^\lambda$ in the right hand side of
\eqref{F1'}. In particular, we can replace $y^\lambda$ by $(x\wedge y)^\lambda$.
This observation combined with \eqref{F1''} give \eqref{F0'}.  Estimate \eqref{F0''} follows in the same way as before, but starting from the estimate
        $$|P_t^\lambda(x,y)|
                \leq   C t\int_0^\infty \frac{e^{-t^2/(4u)}}{u^{(1+2)/2}} W_u^\lambda(x,y)\,du.$$
We can derive \eqref{F0} by using \eqref{F0''} and \eqref{tiza} below.

        By applying \eqref{I derivada} it is not difficult to check that
        $$\partial_y W_u^\lambda(x,y)=
        \left( \frac{2\lambda u}{y}-y \right) \frac{1}{2u} W_u^\lambda(x,y) + \frac{x}{2u}W_u^{\lambda+1}(x,y).$$
        As above we can differentiate under the integral sign to get that $\partial_y\partial_t^\beta P_t^\lambda(x,y)$
        is equal to
        \begin{equation}\label{jajaja}
        \frac{1}{y} \partial_t^\beta P_t^\lambda(x,y)
                 +  \int_0^\infty \frac{\partial_t^\beta\left[ te^{-t^2/(4u)} \right]}{u^{5/2}}
                        \left[xW_u^{\lambda+1}(x,y) - yW_u^{\lambda}(x,y) \right]du.
        \end{equation}
        The first term above is controlled by \eqref{F1}, which gives the first term
        in estimate \eqref{F2}.
        The remaining integral in \eqref{jajaja} is splitted in
        two parts: $I+II=\int_0^{xy/2}+\int_{xy/2}^\infty$. By \eqref{beta lema} and \eqref{W origen},
        \begin{align*}
            |II| &\leq C \int_{xy/2}^\infty \frac{(x+y)e^{-(t^2+x^2+y^2)/(8u)}}{u^{(\beta+5)/2}} \left( \frac{xy}{u} \right)^\tlambda du
                \leq C \frac{(x+y)(xy)^\tlambda}{(t^2+x^2+y^2)^{\tlambda + (\beta+3)/2}} \\
               & \leq  C \frac{y^{\tlambda-1}}{(t+|x-y|)^{\tlambda + \beta+1}}.
        \end{align*}
        On the other hand, by taking into account \eqref{I infinito}, we deduce that, whenever $\frac{xy}{2u} \geq 1$,
        \begin{align*}
            \left| xW_u^{\lambda+1}(x,y) - yW_u^{\lambda}(x,y) \right|
            &  = \frac{e^{-|x-y|^2/4u}}{\sqrt{4\pi u}}
                    \left| x \left[ 1 + \Psi_{\lambda+1/2}\left( \frac{xy}{2u}\right) \right] - y \left[ 1 +
                    \Psi_{\lambda-1/2}\left( \frac{xy}{2u}\right) \right] \right| \\
            &  \leq C \frac{e^{-|x-y|^2/(4u)}}{\sqrt{u}}
                    \left( |x-y|+\frac{u}{y}+\frac{u}{x}  \right).
        \end{align*}
        Then, by proceeding as above,
        \begin{align*}
            |I|&\leq C \left( \int_0^{xy/2} \frac{e^{-(t^2+|x-y|^2)/(8u)}}{u^{(\beta+4)/2}}\,du
                              + \left(\frac{1}{y}+\frac{1}{x}\right)\int_0^{xy/2} \frac{e^{-(t^2+|x-y|^2)/(8u)}}{u^{(\beta+3)/2}}\,du \right) \\
               & \leq  C \left( \frac{1}{(t+|x-y|)^{\beta+2}} + \frac{y^{\tlambda-1}}{(t+|x-y|)^{\tlambda + \beta+1}} +
                 \frac{1}{x} \int_0^{xy/2} \frac{e^{-(t^2+|x-y|^2)/(8u)}}{u^{(\beta+3)/2}}\,du\right).
        \end{align*}
        To finish the proof of  \eqref{F2} we just note that the
        last integral is bounded by
        $$ \frac{C}{y} \int_0^{xy/2} \frac{e^{-(t^2+|x-y|^2)/(8u)}}{u^{(\beta+3)/2}}\,du
            \leq C \frac{y^{\tlambda-1}}{(t+|x-y|)^{\tlambda + \beta+1}}, \quad\hbox{when}~x \geq y/2, $$
        and by
        \begin{align*}
            \frac{C}{x} \int_0^{xy/2} \frac{e^{-(t^2+|x-y|^2)/(8u)}}{u^{(\beta+3)/2}} \left(\frac{xy}{u}\right)du
               & \leq   Cy \int_0^{xy/2} \frac{e^{-(t^2+y^2)/(32u)}}{u^{(\beta+5)/2}} \,du \\
                &\leq \frac{C}{(t^2+y^2)^{(\beta+2)/2}}
                \leq \frac{C}{(t+|x-y|)^{\beta+2}}, \quad \hbox{when}~x < y/2.
        \end{align*}
        \end{proof}

    \begin{Cor}\label{Lem permuta}
        Let $\lambda,\beta>0$ and let $f\in L_{(\beta+\lambda)/2}$. Then,
        for all $x \in \R_+$ and $t>0$,
        $$\partial_t^\beta P_t^\lambda f(x)
            = \int_0^\infty \partial_t^\beta P_t^\lambda(x,y)f(y)\,dy.$$
    \end{Cor}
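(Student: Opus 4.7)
The plan is to combine differentiation under the integral sign with Fubini's theorem. Using the Segovia--Wheeden definition \eqref{fractional derivative}, we fix $m\in\mathbb{N}$ with $m-1\leq\beta<m$ and write
\begin{equation*}
\partial_t^\beta P_t^\lambda f(x)
    =\frac{e^{-i\pi(m-\beta)}}{\Gamma(m-\beta)}\int_0^\infty \partial_t^m\bigl[P_{t+s}^\lambda f(x)\bigr]\,s^{m-\beta-1}\,ds.
\end{equation*}
The goal is to move both the $\partial_t^m$ and the outer $s$-integral inside the spatial integral defining $P_{t+s}^\lambda f$, and then recognize the result as the definition of $\partial_t^\beta P_t^\lambda(x,y)$ acting inside the integral against $f(y)$.

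First I would justify the identity $\partial_t^m[P_{t+s}^\lambda f(x)]=\int_0^\infty \partial_t^m P_{t+s}^\lambda(x,y)\,f(y)\,dy$ for each fixed $s\geq 0$ and $t>0$ by iterating differentiation under the integral sign $m$ times. The hypotheses of the dominated convergence theorem follow from the bounds in Lemma~\ref{Lem fractPoiss} applied with $\beta$ replaced by the integer $m$: the kernel and all its $t$-derivatives up to order $m$ are uniformly dominated on compact $t$-intervals by a function which is integrable against $|f|$ under the weaker assumption $f\in L_{m/2}$, which is implied by $f\in L_{(\beta+\lambda)/2}$.

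Second I would exchange the order of integration between $s$ and $y$ in
\begin{equation*}
\int_0^\infty s^{m-\beta-1}\int_0^\infty \partial_t^m P_{t+s}^\lambda(x,y)\,f(y)\,dy\,ds.
\end{equation*}
Absolute convergence uses the symmetrized bound
\begin{equation*}
|\partial_t^m P_{t+s}^\lambda(x,y)|
\leq C\,\frac{(x\wedge y)^\lambda}{(t+s+|x-y|)^{\lambda+m+1}},
\end{equation*}
which is obtained from the proof of \eqref{F1'} together with the symmetry $P_t^\lambda(x,y)=P_t^\lambda(y,x)$. Performing the $s$-integral first yields
\begin{equation*}
\int_0^\infty \frac{s^{m-\beta-1}\,ds}{(t+s+|x-y|)^{\lambda+m+1}}
\leq \frac{C}{(t+|x-y|)^{\lambda+\beta+1}},
\end{equation*}
producing the majorant $(x\wedge y)^\lambda\,|f(y)|/(t+|x-y|)^{\lambda+\beta+1}$. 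Splitting $(0,\infty)=(0,2x]\cup(2x,\infty)$, on the first interval the denominator is bounded below by a positive power of $t$ and $y^\lambda$ is tame, while on the second $(x\wedge y)^\lambda=x^\lambda$ is constant and $(t+|x-y|)^{\lambda+\beta+1}\asymp(1+y)^{1+\lambda+\beta}$, so the hypothesis $f\in L_{(\beta+\lambda)/2}$ gives integrability.

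Finally, once Fubini is applied, the inner integral $\frac{e^{-i\pi(m-\beta)}}{\Gamma(m-\beta)}\int_0^\infty \partial_t^m P_{t+s}^\lambda(x,y)\,s^{m-\beta-1}\,ds$ is exactly $\partial_t^\beta P_t^\lambda(x,y)$ by definition \eqref{fractional derivative}; this identification was implicitly verified inside the proof of \eqref{F1}. The main obstacle is to match the decay hypothesis $f\in L_{(\beta+\lambda)/2}$ with the sharpest available estimate on the kernel, which forces the use of the symmetric bound $(x\wedge y)^\lambda/(t+|x-y|)^{\lambda+\beta+1}$ rather than \eqref{F1} by itself: the latter would only give $f\in L_{\beta/2}$.
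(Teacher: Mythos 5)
Your proof follows the same route as the paper's (which is only two lines long): first justify $\partial_t^m P_t^\lambda f(x)=\int_0^\infty \partial_t^m P_t^\lambda(x,y)f(y)\,dy$ by a kernel estimate from Lemma~\ref{Lem fractPoiss}, then apply the definition \eqref{fractional derivative} and exchange the $s$- and $y$-integrals. Your third paragraph, with the symmetrized bound $(x\wedge y)^\lambda/(t+s+|x-y|)^{\lambda+m+1}$ and the explicit computation of the $s$-integral, correctly fleshes out the Fubini step the paper leaves implicit, and you are right that \eqref{F1} alone (which only yields decay $y^{-\beta-1}$, i.e.\ would require $f\in L_{\beta/2}$) cannot reach all of $L_{(\beta+\lambda)/2}$.

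One claim in your second paragraph is wrong, however: $f\in L_{(\beta+\lambda)/2}$ does \emph{not} imply $f\in L_{m/2}$ in general. Since $L_{\rho_1}\subset L_{\rho_2}$ precisely when $\rho_1\le\rho_2$ (a larger $\rho$ gives a \emph{larger} space), your inclusion would require $\beta+\lambda\le m$, i.e.\ $\lambda\le m-\beta\le 1$; it fails whenever $\lambda>m-\beta$. For instance, $f(y)=(1+y)^{\beta+\lambda-\varepsilon}$ belongs to $L_{(\beta+\lambda)/2}$ but not to $L_{m/2}$ when $\lambda>m-\beta+\varepsilon$, and against such $f$ the crude bound $|\partial_t^kP_t^\lambda(x,y)|\le C(t+|x-y|)^{-k-1}$ from \eqref{F1''} gives a divergent majorant. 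The repair is immediate and is exactly what the paper does: use \eqref{F0'} with integer order, i.e.\ $|\partial_t^kP_t^\lambda(x,y)|\le C\,(x\wedge y)^\lambda(t+|x-y|)^{-k-1-\lambda}$, whose large-$y$ decay $y^{-k-1-\lambda}$ is integrable against $|f|$ for $f\in L_{(\beta+\lambda)/2}\subset L_{(m+\lambda)/2}$. In other words, the symmetric $\lambda$-improved bound you invoke for the Fubini step must already be used for the differentiation under the integral sign; with that change your argument is complete and coincides with the paper's.
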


        \begin{proof}
        According to \eqref{F0'} in Lemma~\ref{Lem fractPoiss} it follows that
        we can write
        $$\partial_t^m P_t^\lambda f(x)
            = \int_0^\infty \partial_t^m P_t^\lambda(x,y)f(y)\,dy.$$
        We conclude by applying the definition of $\partial_t^\beta$, see \eqref{fractional derivative}.
    \end{proof}

\begin{proof}[Proof of Proposition \ref{thm:harmonic extension}]
It is customary to verify that the Poisson kernel $P_t^\lambda(x,y)$
verifies the equation $\partial_{tt}P_t^\lambda(x,y)-(\Delta_\lambda)_xP_t^\lambda(x,y)=0$,
for all $x,y\in\R_+$ and $t>0$.
It remains to check that the derivatives in $t$ and in $x$ can enter
inside the integral \eqref{definicion de v} that defines $v$. Observe first
that, by Corollary \ref{Lem permuta} and the equation,
$$\partial_{t}^2 v(x,t)
    = \int_0^\infty \partial_{t}^2  P_t^\lambda(x,y)f(y)\,dy
    = -\int_0^\infty \partial_{x}^2  P_t^\lambda(x,y)f(y)\,dy
        +\frac{\lambda(\lambda-1)}{x^2}v(x,t).$$
Notice that the integrals above are both absolutely convergent and
$$\int_0^\infty \partial_{x}^2  P_t^\lambda(x,y)f(y)\,dy
    =\partial_{x} \int_0^\infty \partial_{x}  P_t^\lambda(x,y)f(y)\,dy.$$
By using the symmetry of the kernel it is immediate that we can
estimate $\partial_xP_t^\lambda(x,y)$ by the right hand side of \eqref{F2}
with $\beta=0$. Hence the last integral is absolutely convergent and
it is equal to $\partial_{x}^2  v(x,t)$. Thus $v$ solves
the equation.

Now we have to check the convergence to
    the boundary data. Let us write, for $x\in\R_+$,
    $$P_t^\lambda f(x)
        = \int_0^\infty [P_t^\lambda(x,y)-\mathbb{P}_t(x-y)]f(y)\,dy
            + \int_\R \mathbb{P}_t(x-y)f_0(y)\,dy,$$
    where $f_0(y)=f(y)$ for $y\geq0$, and $f_0(y)=0$ for $y < 0$, and
    $$\mathbb{P}_t(x)
        = \frac{1}{\pi}\frac{t}{t^2+x^2}
            =\frac{t}{2\sqrt{\pi}}\int_0^\infty\frac{e^{-t^2/(4u)}}{u^{3/2}}\W_u(x)\,du, \quad x\in \mathbb{R}, \ t>0,$$
    is the classical Poisson kernel on $\R$.
    Since $f_0$ is in $C^\alpha(\R)$,  by standard computations,
    $$\lim_{t \to 0^+} \int_\R \mathbb{P}_t(x-y)f_0(y)\,dy
        = f(x), \quad\hbox{for each}~x \in \R_+.$$
    By using the subordination formula \eqref{subord kernel}, for $x,y\in\R_+$ and $t>0$, we have
    $$P_t^\lambda(x,y)-\mathbb{P}_t(x-y)
        = \frac{t}{2\sqrt{\pi}} \int_0^\infty \frac{e^{-t^2/(4u)}}{u^{3/2}} [W_u^\lambda(x,y)-\W_u(x-y)] \,du.$$
    According to \eqref{25.1},
    \begin{align*}
        t\int_0^{xy} \frac{e^{-t^2/(4u)}}{u^{3/2}} \left|W_{u}^\lambda(x,y)-\W_{u}(x-y)\right| du
           & \leq C \frac{t}{xy} \int_0^{xy} \frac{e^{-t^2/(4u)}}{u} \,du\\
&\leq C\frac{t^{1/2}}{xy}\int_0^{xy}e^{-ct^2/u}\frac{du}{u^{3/4}}       \leq C \frac{t^{1/2}}{(xy)^{3/4}},
    \end{align*}
    and similarly, by \eqref{control clasico},
    \begin{align*}
        t \int_{xy}^\infty \frac{e^{-t^2/(4u)}}{u^{3/2}} \left|W_{u}^\lambda(x,y)-\W_{u}(x-y)\right| du
            \leq C t \int_{xy}^\infty \frac{e^{-t^2/(4u)}}{u^{2}} \,du
            \leq C \frac{t^{1/2}}{(xy)^{3/4}}.
    \end{align*}
    Also, by \eqref{control clasico} and \eqref{subord kernel},
\begin{equation}\label{tiza}
P_t^\lambda(x,y) \leq C \mathbb{P}_t(x-y).
\end{equation}
    These estimates lead to
    \begin{align*}
        \left| \int_0^\infty [P_t^\lambda(x,y)-\mathbb{P}_t(x-y)]f(y)\,dy \right|
            &\leq  C \left( \int_{(0,x/2)\cup (2x,\infty)} \frac{ty^\alpha}{t^2+(x-y)^2}  \,dy
                             + \int_{x/2}^{2x} \frac{t^{1/2} y^{\alpha}}{(xy)^{3/4}}  \,dy   \right) \\
       & \leq  C \left( t x^{\alpha-1} + t^{1/2} x^{\alpha-1/2} \right).
    \end{align*}
    Therefore the conclusion follows from the fact that
    $$\lim_{t \to 0^+} \int_0^\infty \left[P_t^\lambda(x,y)-\mathbb{P}_t(x-y)\right]f(y)\,dy
        = 0, \quad\hbox{for each}~ x \in \R_+.$$
        \end{proof}
        
        {\section{H\"older spaces $C^\alpha_+$}\label{sec:Holder}}
    \subsection{{Duality between Hardy spaces $H^p(\R_+)$ and H\"older spaces $C^\alpha_+$}} \label{sec:Hardy}

    Let $1/2<p \leq 1$. We define in this section the atomic Hardy space $H^p(\R_+)$
    associated to the Bessel operator $\Delta_\lambda$ on $\R_+$.
    A measurable function $a$ on $\R_+$ is called a $(p,\infty)$-atom when one of the following properties is satisfied
    \begin{itemize}
        \item[$(i)_\infty$] there exist $0 \leq b < c<\infty$ such that $\supp a \subset [b,c]$, $\displaystyle \int_0^\infty a(x)\,dx=0$ and $\|a\|_\infty \leq (c-b)^{-1/p}$;
        \item[$(ii)_\infty$] $a=b^{-1/p} \chi_{(0,b)}$, for some $b \in \R_+$.
    \end{itemize}
    A distribution $f \in\mathcal{S}'(\R_+)$, being $\mathcal{S}(\R_+)$ the Schwartz space,
     is said to be in $H^p_\infty(\R_+)$ provided that $f=\sum_{i=1}^\infty \alpha_i a_i$,
    in the sense of convergence in $\mathcal{S}'(\R_+)$, where, for every $i \in \N$, $a_i$ is a $(p,\infty)$-atom and $\alpha_i \in \C$,
    being $\sum_{i=1}^\infty |\alpha_i|^p<\infty$. The norm in $H^p_\infty(\R_+)$ is defined by
    $$\|f\|_{H^p_\infty(\R_+)} = \inf \left( \sum_{i=1}^\infty |\alpha_i|^p \right)^{1/p},$$
    where the infimum is taken over all possible decompositions of $f$ as above.

    The space $H^1_\infty(\R_+)$ was first considered by Fridli \cite{Fr} to study the local Hilbert transform.

    Note that if condition $(ii)_\infty$ above is replaced by the following one:
    \begin{itemize}
        \item[$(ii')_{\infty}$] there exists $b>0$ such that $\supp a \subset [0,b]$ and $\|a\|_\infty \leq b^{-1/p}$,
    \end{itemize}
    then the atomic Hardy space $H^p_\infty(\R_+)$ does not change. Indeed, suppose that $a$ is measurable function on $\R_+$
    such that $\supp a \subset [0,b]$ and $\|a\|_\infty \leq b^{-1/p}$, for some $b>0$. Then we can write
    $$a=2A_1 + b^{-1+1/p} \int_0^b a(y)\,dy\cdot A_2,$$
    where
    $$A_1=\frac{1}{2}\left(a-\frac{1}{b} \int_0^b a(y)\,dy \right) \chi_{(0,b)},\quad \text{and} \quad A_2=b^{-1/p}\chi_{(0,b)}.$$
    Note that $A_1$ and $A_2$ are $(p,\infty)$-atoms and $\displaystyle \left| b^{-1+1/p} \int_0^b a(y)\,dy \right| \leq 1$.

    Let $1<q<\infty$. We say that a measurable function $a$ on $\R_+$ is a $(p,q)$-atom when one of the following two conditions is satisfied:
    \begin{itemize}
        \item[$(i)_q$] there are $0 \leq b < c<\infty$ such that $\supp a\subset [b,c]$, $\displaystyle \int_0^\infty a(x)\,dx=0$ and $\|a\|_q \leq (c-b)^{1/q-1/p}$;
        \item[$(ii)_q$] $a=b^{-1/p} \chi_{(0,b)}$, for some $b \in \R_+$.
    \end{itemize}
    The atomic Hardy space $H^p_q(\R_+)$ is defined by using $(p,q)$-atoms as usual. As above, if the condition $(ii)_q$ is replaced
    by the corresponding property $(ii')_q$, the space $H^p_q(\R_+)$ does not change.

    We are going to show that $H^p_{q_1}(\R_+)=H^p_{q_2}(\R_+)$ algebraic and topologically, for every $1<q_1,q_2 \leq \infty$. In the sequel
    $H^p(\R)$ denotes the classical Hardy space on $\R$, see \cite{Duo, Ste2}. If $f \in \mathcal{S}'(\R_+)$ or $f$ is a function defined in $[0,\infty)$ we represent by
    $f_o$ the odd extension of $f$ to $\R$, properly understood in each case.

    \begin{Prop}\label{Prop oddHardy}
        Let $1/2<p \leq 1$, $1<q \leq \infty$, and $f \in \mathcal{S}'(\R_+)$.
        Then, $f \in H^p_q(\R_+)$ if and only if $f_o \in H^p(\R)$ and $\|f\|_{H^p_q(\R_+)} \sim \|f_o\|_{H^p(\R)}$.
    \end{Prop}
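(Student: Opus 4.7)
The strategy is to transfer the problem to the real line via the odd reflection $x\mapsto -x$ and to invoke the classical atomic characterization of $H^p(\R)$. Both directions reduce to bookkeeping on atoms, together with a preservation-of-convergence argument in the respective distribution spaces.

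For $(\Rightarrow)$, given a decomposition $f=\sum_i\alpha_i a_i$ in $H^p_q(\R_+)$, I will show that each odd extension $(a_i)_o$ is a sum of at most two classical $H^p(\R)$-atoms, up to a uniformly bounded constant. If $a$ is a type $(i)_q$ atom supported in $[b,c]$ with $b>0$, then
\[
a_o=a\chi_{[b,c]}-a(-\cdot)\chi_{[-c,-b]}
\]
is the sum of two disjoint classical atoms of common length $c-b$: both pieces have zero integral (the reflected one thanks to the change of variable $y=-x$) and $L^q$-norm bounded by $(c-b)^{1/q-1/p}$. When $b=0$, $a_o$ is odd (so has zero integral), is supported in $[-c,c]$, and satisfies $\|a_o\|_q\leq 2^{1/p}(2c)^{1/q-1/p}$, so it is a bounded multiple of a classical atom. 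If $a=b^{-1/p}\chi_{(0,b)}$ is of type $(ii)_q$, then $a_o=b^{-1/p}\operatorname{sgn}(x)\chi_{(-b,b)}(x)$ has zero integral, support of length $2b$, and $\|a_o\|_q=2^{1/p}(2b)^{1/q-1/p}$. Summing $f_o=\sum_i\alpha_i(a_i)_o$ in $\mathcal{S}'(\R)$ then yields $\|f_o\|_{H^p(\R)}\leq C\|f\|_{H^p_q(\R_+)}$.

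For $(\Leftarrow)$, start from a classical atomic decomposition $f_o=\sum_j\beta_j b_j$ in $H^p(\R)$, and symmetrize by replacing each $b_j$ with its odd part $\tilde b_j(x)=\tfrac12(b_j(x)-b_j(-x))$. Since $f_o$ is odd, pairing both sides against any Schwartz test function (using that the even part of $\phi$ annihilates $f_o$) shows $f_o=\sum_j\beta_j\tilde b_j$ in $\mathcal{S}'(\R)$. Restricting to $\R_+$, which is continuous into $\mathcal{S}'(\R_+)$, gives $f=\sum_j\beta_j\tilde b_j|_{\R_+}$. If $b_j$ is supported in $[c,d]\subset[0,\infty)$ (resp.\ $\subset(-\infty,0]$), then $\tilde b_j|_{\R_+}=b_j/2$ (resp.\ $-b_j(-\cdot)/2$) is a bounded multiple of a type $(i)_q$ atom on $\R_+$. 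The delicate case is $c<0<d$; setting $r=\max(d,|c|)$, the function $\tilde b_j|_{\R_+}$ is supported in $(0,r]$ and
\[
\|\tilde b_j|_{\R_+}\|_q\leq\|b_j\|_q\leq(d-c)^{1/q-1/p}\leq r^{1/q-1/p},
\]
because $1/q-1/p<0$ and $d-c\geq r$. Using the equivalent reformulation $(ii')_q$ (in which the condition $\int a=0$ is dropped but the support may be taken in $(0,b)$), this identifies $\tilde b_j|_{\R_+}$ as a bounded multiple of a $(p,q)$-atom on $\R_+$, and summing closes the estimate.

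The hardest point is the case of a classical atom straddling the origin: once restricted to $\R_+$ it loses any cancellation, so type $(i)_q$ atoms alone are insufficient to absorb it. It is precisely the cancellation-free formulation $(ii')_q$ (noted in the excerpt just before the statement) that makes the symmetrize-and-restrict strategy go through. The remaining checks—uniform control of the constants generated by the atomic surgery and the continuity of odd extension and restriction between $\mathcal{S}'(\R)$ and $\mathcal{S}'(\R_+)$—are routine.
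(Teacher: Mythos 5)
Your proposal is correct and follows essentially the same route as the paper: odd extension of each atom (split into at most two classical atoms, or absorbed as a single one up to a bounded constant) for the forward direction, and symmetrization to the odd part followed by restriction to $\R_+$, with the three-case analysis and the cancellation-free reformulation $(ii')_q$ handling the atom straddling the origin, for the converse. The only differences are cosmetic normalizing constants in the atomic surgery.
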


    \begin{proof}
        Suppose that $f \in H^p_q(\R_+)$. We write $f=\sum_{i=1}^\infty \alpha_i a_i$ in $\mathcal{S}'(\R_+)$ where $a_i$
        is a $(p,q)$-atom and $\alpha_i \in \C$, for every $i \in \N$, being $\sum_{i=1}^\infty |\alpha_i|^p<\infty$. Then,
        $f_o=\sum_{i=1}^\infty \alpha_i \tilde{a}_i$, where $\tilde{a}_i$ is the odd extension of $a_i$ to $\R$. It is clear that
        if $a$ is a $(p,q)$-atom such that $\supp a \subset \R_+$, then $\tilde{a}=\tilde{a}\chi_{(0,\infty)} + \tilde{a}\chi_{(-\infty,0)}$,
        and $\tilde{a}\chi_{(0,\infty)}$, $\tilde{a}\chi_{(-\infty,0)}$ are $(p,q)$-atoms for $H^p(\R)$. If $b>0$ and $a=b^{-1/p}\chi_{(0,b)}$,
        then $\tilde{a}=b^{-1/p}(\chi_{(0,b)}-\chi_{(-b,0)})$ and $\tilde{a}/2 $ is a $(p,q)$-atom for $H^p(\R)$. Hence, $f_o \in H^p(\R)$ and
        $\|f_o\|_{H^p(\R)} \leq C \|f\|_{H^p_q(\R_+)}$.

        Assume now that $f_o \in H^p(\R)$ is odd an $f_o=\sum_{i=1}^\infty \alpha_i a_i$ in $\mathcal{S}'(\R)$, where $a_i$
        is a $(p,q)$-atom for $H^p(\R)$ and $\alpha_i \in \C$, for every $i \in \N$, being $\sum_{i=1}^\infty |\alpha_i|^p<\infty$.
        Then, $f_o=\sum_{i=1}^\infty \alpha_i A_i$ in $\mathcal{S}'(\R)$, where $A_i(x)=(a_i(x)-a_i(-x))/2$, $x \in \R$, $i \in \N$, and
        $f=\sum_{i=1}^\infty \alpha_i \A_i$ in $\mathcal{S}'(\R_+)$, where $\A_i=A_i \chi_{(0,\infty)}$, $i \in \N$. If $a$ is a
        $(p,\infty)$-atom for $H^p(\R)$ such that
        \begin{itemize}
            \item $\supp a \subset [c,d] \subset [0,\infty)$, then $A(x)=(a(x)-a(-x))/2=a(x)/2$, $x \in [0,\infty)$,
            $\supp \A \subset [c,d]$ and $\|\A\|_q \leq (d-c)^{1/q-1/p}/2$, where $\A=A \chi_{(0,\infty)}$.
            \item $\supp a \subset [c,d] \subset (-\infty,0]$, then $A(x)=(a(x)-a(-x))/2=a(-x)/2$, $x \in [0,\infty)$,
            $\supp \A \subset [-d,-c]$ and $\|\A\|_q \leq  (d-c)^{1/q-1/p}/2$, where $\A=A \chi_{(0,\infty)}$.
            \item $\supp a \subset [c,d]$, $c<0<d$, then $A(x)=(a(x)-a(-x))\chi_{(0,\infty)}/2$, $x \in [0,\infty)$,
            is supported in $[0,\max\{-c,d\}]$ and $\|\A\|_q \leq \|a\|_q \leq  (d-c)^{1/q-1/p} \leq \max\{-c,d\}^{1/q-1/p}$.
        \end{itemize}
        Thus, $f \in H^p_q(\R_+)$ and $\|f\|_{H^p_q(\R_+)} \leq C \|f_o\|_{H^p(\R)}$.
    \end{proof}

    An immediate consequence of Proposition~\ref{Prop oddHardy} (and a well known result about
    atomic characterization of classical Hardy spaces, see \cite[Chapter~III,~5.6,~p.~130]{Ste2}) is the following

    \begin{Cor}\label{Cor eqHardy}
        Let $1/2<p \leq 1$ and $1<q \leq \infty$. Then, $H^p_\infty(\R_+)=H^p_q(\R_+)$ algebraic and topologically.
    \end{Cor}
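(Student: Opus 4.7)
The plan is to chain together two equivalences: first, Proposition \ref{Prop oddHardy} which identifies each $H^p_q(\R_+)$ with the classical Hardy space $H^p(\R)$ via odd extension, and second, the classical theorem that all atomic characterizations of $H^p(\R)$ coincide. Since the hard work (passing from atoms on $\R_+$ to atoms on $\R$ and back) has already been carried out in Proposition \ref{Prop oddHardy}, what remains is purely formal.

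More precisely, let $f\in \mathcal{S}'(\R_+)$. First I would apply Proposition \ref{Prop oddHardy} with the exponent $q$: this gives that $f\in H^p_q(\R_+)$ if and only if $f_o\in H^p(\R)$, together with the two-sided norm estimate
$$\|f\|_{H^p_q(\R_+)}\sim \|f_o\|_{H^p(\R)}.$$
Next I would apply the same proposition with $q=\infty$, which yields the analogous equivalence
$$\|f\|_{H^p_\infty(\R_+)}\sim \|f_o\|_{H^p(\R)}.$$
In both statements, the Hardy space on the right is the classical $H^p(\R)$ defined in terms of $(p,q)$-atoms (resp. $(p,\infty)$-atoms) on the line.

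At this point I would invoke the classical atomic decomposition theorem for $H^p(\R)$ (as stated in \cite[Chapter~III, 5.6, p.~130]{Ste2}): for every $1/2<p\leq 1$ and every $1<q\leq\infty$, the atomic space defined by $(p,q)$-atoms coincides with $H^p(\R)$, with equivalent quasinorms. Therefore the right-hand sides of the two norm equivalences above represent the same quantity (up to constants depending only on $p,q$), and combining them yields
$$\|f\|_{H^p_q(\R_+)}\sim \|f\|_{H^p_\infty(\R_+)},$$
which proves the algebraic and topological identity $H^p_q(\R_+)=H^p_\infty(\R_+)$.

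The only mild subtlety, and the one step I would be sure to address explicitly, is that Proposition \ref{Prop oddHardy} as stated requires $f\in\mathcal{S}'(\R_+)$, so one should check that the same distribution $f$ is the object being tested in both equivalences; this is automatic because the odd extension $f_o$ is uniquely determined by $f$ and does not depend on the atomic exponent $q$ chosen on $\R_+$. No obstacle is expected beyond this bookkeeping, since the transfer between $\R_+$ and $\R$ and the equivalence of different atomic norms on $\R$ have both been established.
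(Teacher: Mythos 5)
Your argument is exactly the paper's: Corollary \ref{Cor eqHardy} is derived there as an immediate consequence of Proposition \ref{Prop oddHardy} applied with both exponents, combined with the classical equivalence of atomic characterizations of $H^p(\R)$ from \cite[Chapter~III, 5.6, p.~130]{Ste2}. The proof is correct and requires no changes.
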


    In the sequel we simply write $H^p(\R_+)$ to denote the Hardy space $H^p_\infty(\R_+)$.

    \begin{Prop}\label{Prop L2Hp}
        Let $1/2 < p \leq 1$. There exists $C>0$ such that, for every $f \in L^2(\R_+) \cap H^p(\R_+)$, we can find
        a sequence $\{\alpha_j\}_{j=1}^\infty$ of complex numbers and a sequence $\{a_j\}_{j=1}^\infty$ of $(p,2)$-atoms such that
        $\sum_{j=1}^\infty |\alpha_j|^p \leq C \|f\|_{H^p(\R_+)}^p$ and
        $f = \sum_{j=1}^\infty \alpha_j a_j$ in $L^2(\R_+)$.
    \end{Prop}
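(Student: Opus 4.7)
The plan is to reduce the statement to the classical $L^2$--atomic decomposition of the real-line Hardy space $H^p(\R)$ via the odd--extension isomorphism already established in Proposition~\ref{Prop oddHardy} and Corollary~\ref{Cor eqHardy}. First, since $f\in L^2(\R_+)$, its odd extension $f_o$ lies in $L^2(\R)$, and Proposition~\ref{Prop oddHardy} gives $f_o\in H^p(\R)$ with $\|f_o\|_{H^p(\R)}\sim\|f\|_{H^p(\R_+)}$. Hence $f_o\in L^2(\R)\cap H^p(\R)$, which is exactly the setting in which the classical theory applies.

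Next, I would invoke the well-known refinement of the atomic decomposition for classical Hardy spaces (see for instance \cite[Chapter~III,~\S5.6]{Ste2}): every $g\in L^2(\R)\cap H^p(\R)$ admits a decomposition $g=\sum_{i=1}^\infty \beta_i b_i$ where the $b_i$ are $(p,2)$-atoms of $H^p(\R)$, the series converges in $L^2(\R)$ (and in $H^p(\R)$), and
$$\sum_{i=1}^\infty |\beta_i|^p \leq C\|g\|_{H^p(\R)}^p.$$
Applying this to $g=f_o$ yields $f_o=\sum_{i=1}^\infty \alpha_i a_i$ in $L^2(\R)$, with $\sum |\alpha_i|^p\leq C\|f\|_{H^p(\R_+)}^p$.

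Now I would symmetrize and restrict. Setting $A_i(x)=\tfrac12(a_i(x)-a_i(-x))$, the series $\sum_i \alpha_i A_i$ still converges to $f_o$ in $L^2(\R)$ because $f_o$ is odd and $g\mapsto\tfrac12(g(x)-g(-x))$ is a bounded projection on $L^2(\R)$. Restricting to $\R_+$ and using the case analysis carried out in the proof of Proposition~\ref{Prop oddHardy}, each $\widetilde{A}_i:=A_i\chi_{(0,\infty)}$ is (up to the harmless factor $1/2$) a $(p,2)$-atom for $H^p(\R_+)$ according to whether $\supp a_i$ lies in $[0,\infty)$, in $(-\infty,0]$, or straddles the origin; in particular, the cancellation property $\int_0^\infty A_i=0$ (in the first two cases) or the support condition forcing $A_i$ to be supported in a set of the form $(0,b)$ (in the straddling case) transfers correctly, and the $L^q$ normalization is preserved up to a factor independent of $i$. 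Thus $f=\sum_i \alpha_j a_j$ in $L^2(\R_+)$ with $a_j$ a $(p,2)$-atom of $H^p(\R_+)$ and $\sum_j |\alpha_j|^p\leq C\|f\|_{H^p(\R_+)}^p$.

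The main obstacle I anticipate is purely bookkeeping: one must verify that the atomic structure is preserved under the symmetrization--restriction procedure simultaneously with $L^2$ convergence of the resulting series (rather than merely weak or distributional convergence). This is handled by the boundedness on $L^2(\R)$ of the odd-projection operator together with the fact that the restriction map $L^2(\R)\to L^2(\R_+)$ has norm one, so no compactness or density argument is needed.
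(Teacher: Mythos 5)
Your proposal is correct and follows essentially the same route as the paper: odd extension, the $L^2$-convergent $(p,2)$-atomic decomposition of $f_o$ in the classical Hardy space $H^p(\R)$, and then symmetrization and restriction of the atoms exactly as in the proof of Proposition~\ref{Prop oddHardy}. The only difference is the reference for the $L^2$-convergence of the classical atomic decomposition, for which the paper cites \cite[Theorem 1.2]{ZH} rather than \cite{Ste2}; you correctly identify this convergence as the key technical point.
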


    \begin{proof}
        Let $f \in L^2(\R_+) \cap H^p(\R_+)$. Then, by Proposition~\ref{Prop oddHardy}, $f_o \in L^2(\R) \cap H^p(\R)$
        and $\|f\|_{H^p(\R_+)} \sim \|f_o\|_{H^p(\R)}$. According to \cite[Theorem 1.2]{ZH} there exist a sequence
        $\{\alpha_j\}_{j=1}^\infty$ of complex numbers and a sequence of $(p,2)$-atoms for $H^p(\R)$ such that
        $\sum_{j=1}^\infty |\alpha_j|^p \leq C \|f_o\|_{H^p(\R)}^p$ and
        $f_o = \sum_{j=1}^\infty \alpha_j a_j$ in $L^2(\R)$, where $C>0$ does not depend on $f$.
        As it was shown in the proof of Proposition~\ref{Prop oddHardy}, $f = \sum_{j=1}^\infty \alpha_j \mathbb{A}_j$ in $\mathcal{S}'(\R_+)$,
        where, for every $j \in \mathbb{N}\setminus \{0\}$,
        $\mathbb{A}_j(x) = \chi_{[0,\infty)}(x)(a_i(x)-a_i(-x))/2 $, $x \in [0,\infty)$, is a $(p,2)$-atom for $H^p(\R_+)$.
        Moreover,
        $\sum_{j=1}^\infty |\alpha_j|^p \leq C \|f\|_{H^p(\R_+)}^p$ and
        $f = \sum_{j=1}^\infty \alpha_j \mathbb{A}_j$ in $L^2(\R_+)$.
    \end{proof}

    We denote by $C^\alpha(\R)$ the classical space of $\alpha$-H\"older continuous functions on $\R$, $0<\alpha<1$,
    and by $\mathrm{Lip}(\R)$ the space of Lipschitz functions on $\R$.

    \begin{Lem}\label{Lem oddHolder}
         Let $\alpha \in (0,1)$. A function $f$ is in $C^\alpha_+$ (resp. in $\mathrm{Lip}_+$)  if and only if $f_o$
         is in $C^\alpha(\R)$ (resp. in $\mathrm{Lip}(\R)$).
    \end{Lem}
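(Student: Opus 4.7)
The plan is to verify the equivalence by direct case analysis, using the fact that the weighted term $\sup_{x\in\R_+}x^{-\alpha}|f(x)|$ in $\|f\|_{C^\alpha_+}$ is exactly what is needed to control $f_o$ across the origin, where $f_o(0)=0$ by oddness.

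For the direction ($\Rightarrow$), assume $f\in C^\alpha_+$ and fix $x,y\in\R$ with $x\neq y$. We split into cases. If $x,y>0$ or $x,y<0$, the bound $|f_o(x)-f_o(y)|\leq \|f\|_{C^\alpha_+}|x-y|^\alpha$ follows immediately from the Hölder seminorm of $f$ on $\R_+$ together with the identity $f_o(-z)=-f_o(z)$. If $x>0>y$, I would write
$$|f_o(x)-f_o(y)|=|f(x)+f(-y)|\leq |f(x)|+|f(-y)|\leq \|f\|_{C^\alpha_+}\bigl(x^\alpha+(-y)^\alpha\bigr),$$
and since $0<x\leq x-y=|x-y|$ and $0<-y\leq x-y=|x-y|$, this yields $|f_o(x)-f_o(y)|\leq 2\|f\|_{C^\alpha_+}|x-y|^\alpha$. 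The boundary case where $x=0$ or $y=0$ is handled the same way using the weighted sup.

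For the direction ($\Leftarrow$), assume $f_o\in C^\alpha(\R)$. Restricting to $x,y\in\R_+$ gives the Hölder seminorm of $f$ at once. For the weighted bound, I would use that oddness forces $f_o(0)=0$, so that for every $x\in\R_+$,
$$|f(x)|=|f_o(x)-f_o(0)|\leq \|f_o\|_{C^\alpha(\R)}\,x^\alpha,$$
which gives $\sup_{x\in\R_+}x^{-\alpha}|f(x)|\leq \|f_o\|_{C^\alpha(\R)}$. Combining, $\|f\|_{C^\alpha_+}\leq 2\|f_o\|_{C^\alpha(\R)}$.

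The Lipschitz case $\alpha=1$ is verbatim the same argument, replacing every $|\cdot|^\alpha$ by $|\cdot|$. I do not foresee a real obstacle: the only subtle point is to notice that the weighted term $x^{-\alpha}|f(x)|$ is precisely equivalent, via the forced value $f_o(0)=0$, to the Hölder bound of $f_o$ between points on opposite sides of the origin, and this is exactly why $C^\alpha_+$ was defined with that extra term.
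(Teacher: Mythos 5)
Your proof is correct and follows essentially the same route as the paper: a direct case analysis in which the weighted term $\sup_x x^{-\alpha}|f(x)|$ handles the cross-origin case (the paper splits $|{-f(-x)}-f(y)|\leq|f(-x)-f(y)|+2|f(-x)|$ rather than bounding both values separately, but this is an immaterial variation). The converse direction, which the paper dismisses as obvious, is spelled out correctly via $f_o(0)=0$.
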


    \begin{proof}
        Let $f \in C^\alpha_+$. It is clear that $|f_o(x)-f_o(y)| \leq C |x-y|^\alpha$, $x,y \in \R$ and $xy \geq 0$.
        If $x<0<y$, we can write
        \begin{align*}
            |f_o(x)-f_o(y)|
                \leq & |-f(-x)-f(y)|
                \leq |f(-x)-f(y)| +2|f(-x)|
                \leq C (|x+y|^\alpha+|x|^\alpha)
                \leq C |x-y|^\alpha.
        \end{align*}
        Hence, $f_o \in C^\alpha(\R)$. Moreover, if $f_o \in C^\alpha(\R)$ it is obvious that
        $f_o \chi_{(0,\infty)} \in C^\alpha_+$. When $f \in \mathrm{Lip}_+$ we can proceed similarly.
    \end{proof}

    By proceeding as in the proof of \cite[Theorem 5.1, p. 213]{Tor} and using the John--Nirenberg Lemma we can see that $f \in C^\alpha_+$
    if and only if, for every $1 \leq p < \infty$, there exists $C_p>0$ such that
    \begin{equation*}\label{N1}
        \frac{1}{|I|^{1+\alpha}} \int_I |f(y)-f_I|^p \,dy \leq C_p,
    \end{equation*}
    for every bounded interval $I \subset \R_+$, and
    \begin{equation*}\label{N2}
        \frac{1}{|I|^{1+\alpha}} \int_I |f(y)|^p \,dy \leq C_p,
    \end{equation*}
    for every $I=(0,b)$, $b>0$. Moreover,
    $\inf\{C_p : \text{ the above conditions hold}\}
    \sim\|f\|_{C^\alpha_+}$.

    By following exactly the same steps as in the classical case (see for example \cite[p.~130]{Ste2}) it is possible to
    show the following duality result. We omit the details.

    \begin{Th}\label{Prop duality}
        Let $1/2<p < 1$. The dual space of the Hardy space $H^p(\R_+)$ coincides with
        the space $C^{(1-p)/p}_+$ in the following sense. For every $T \in \left(H^p(\R_+)\right)'$
        there exists $f \in C^{(1-p)/p}_+$ such that
        $$\langle T,g \rangle = \int_0^\infty f(y)g(y)\,dy, \quad g \in \spann\{(p,\infty)\text{-atoms}\},$$
        and, if $f \in C^{(1-p)/p}_+$ then the mapping $T_f$ defined by
        $$\langle T_f,g \rangle = \int_0^\infty f(y)g(y)\,dy, \quad g \in \spann\{(p,\infty)\text{-atoms}\},$$
        can be extended to $H^p(\R_+)$ as an element of $\left(H^p(\R_+)\right)'$.
    \end{Th}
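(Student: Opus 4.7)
The proof will follow Stein's classical template \cite[p.~130]{Ste2}, with adaptations needed for the two families of $(p,\infty)$-atoms on $\R_+$. Throughout I set $\alpha=(1-p)/p$, so that $1+\alpha=1/p$, and split the statement into its two implications.

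For the implication $f\in C^\alpha_+\Rightarrow T_f\in(H^p(\R_+))'$, I would verify the estimate $|T_f(a)|\leq C\|f\|_{C^\alpha_+}$ on each $(p,\infty)$-atom and then extend $T_f$ by density. For a first-type atom $a$ supported in $I=[b,c]$ with $\int a=0$, I would use the cancellation to rewrite $\int_0^\infty fa=\int_I(f-f_I)a$ and then combine $\|a\|_\infty\leq|I|^{-1/p}$ with the pointwise H\"older bound $|f-f_I|\leq C\|f\|_{C^\alpha_+}|I|^\alpha$; the exponents collapse because $1-1/p+\alpha=0$. For a second-type atom $a=b^{-1/p}\chi_{(0,b)}$, for which no cancellation is available, I would instead use the second summand in $\|\cdot\|_{C^\alpha_+}$, namely $|f(y)|\leq\|f\|_{C^\alpha_+}y^\alpha$, and compute $\int_0^b y^\alpha\,dy=b^{1+\alpha}/(1+\alpha)$; again the exponents cancel. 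Summing an atomic decomposition and using $p\leq 1$ gives $|T_f(g)|\leq C\|f\|_{C^\alpha_+}\|g\|_{H^p(\R_+)}$ on the span of atoms, and the functional extends uniquely.

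For the converse, I would recover $f$ from $T\in(H^p(\R_+))'$ interval by interval via Hilbert space duality. Fix $I\subset\R_+$ with $0\notin\overline{I}$. For every $g\in L^2(I)$ with $\int_I g=0$, the normalization $g/(\|g\|_{L^2(I)}|I|^{1/p-1/2})$ is a $(p,2)$-atom, and by Corollary~\ref{Cor eqHardy} it sits in $H^p(\R_+)$ with uniformly bounded norm. Hence $|T(g)|\leq C\|T\||I|^{1/p-1/2}\|g\|_{L^2(I)}$, and Riesz representation on $L^2_0(I)=\{g\in L^2(I):\int_I g=0\}$ produces $f^I\in L^2(I)$, unique up to an additive constant, with $T(g)=\int_I f^I g$ for $g\in L^2_0(I)$ and $\|f^I-(f^I)_I\|_{L^2(I)}\leq C\|T\||I|^{1/p-1/2}$. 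For intervals $I=(0,b)$ touching the origin, the presence of the second type of atom means that a $(p,2)$-atom normalization is available on the whole of $L^2(0,b)$, not just on $L^2_0$; this represents $T$ on the full space and simultaneously eliminates the constant ambiguity near $0$, yielding $\|f^I\|_{L^2(0,b)}\leq C\|T\|b^{1/p-1/2}$.

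To finish, I would patch the local pieces $f^I$ into a single function $f$ on $\R_+$: on any overlap $I_1\cap I_2$ the representatives must agree up to a constant, and the pieces attached to intervals $(0,b)$ fix these constants once and for all; propagating this choice along an exhausting chain $(0,b_j)\nearrow\R_+$ produces a globally defined $f$ that represents $T$ on $\mathrm{span}\{(p,\infty)\text{-atoms}\}$. The estimates collected above are exactly $\|f-f_I\|_{L^2(I)}\leq C\|T\||I|^{\alpha+1/2}$ and $\|f\|_{L^2(0,b)}\leq C\|T\|b^{\alpha+1/2}$ because $1/p-1/2=\alpha+1/2$, and these are the $L^2$ Campanato-type conditions recalled just before the statement; applying that characterization yields $f\in C^{(1-p)/p}_+$ with $\|f\|_{C^\alpha_+}\leq C\|T\|$. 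The step I anticipate requiring the most care is the coherent choice of constants in the patching, together with verifying that the global Campanato bound survives the gluing; everything else is Stein's argument line by line.
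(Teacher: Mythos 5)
Your proposal is correct and follows precisely the route the paper intends: the paper itself gives no argument beyond a pointer to the classical case \cite[p.~130]{Ste2}, and the two adaptations you make --- pairing the non-cancellative atoms $b^{-1/p}\chi_{(0,b)}$ against the growth part $\sup_x x^{-\alpha}|f(x)|$ of the $C^\alpha_+$ norm, and running the Riesz representation on all of $L^2(0,b)$ (via the $(ii')_2$-atoms and Corollary~\ref{Cor eqHardy}) so that the additive constants in the patching are fixed by the pieces at the origin --- are exactly the modifications the half-line setting requires. The one step meriting an extra word is the extension of $T_f$ from $\spann\{(p,\infty)\text{-atoms}\}$ to all of $H^p(\R_+)$ (a finite decomposition of $g$ need not realize $\|g\|_{H^p(\R_+)}$), which in this paper's framework is what the $L^2$-convergent $(p,2)$-atomic decomposition of Proposition~\ref{Prop L2Hp} is designed to handle.
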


\begin{Rem}
In the case $p=1$ the result of
Theorem \ref{Prop duality} is already known, see \cite{BCFR2}.
\end{Rem}

\quad

    \subsection{{Characterizations of the H\"older space $C_+^\alpha$}} \label{sec:core}

    {In this section we characterize the H\"older spaces $C_+^\alpha$ by using the usual
    pointwise condition, the growth property \eqref{cara},
     Campanato-type conditions and also Carleson measures involving derivatives
    of the Bessel--Poisson semigroup.}
    
{    \begin{Th}[Characterizations of the H\"older space
$C^\alpha_+$]\label{Th1.1}
        Let $\lambda,\beta>0$ and $0<\alpha<1$ be such that $\alpha<\lambda \wedge \beta=\min\{\lambda,\beta\}$.
        Assume that $f$ is a continuous function in $L_{\lambda\wedge\beta+1}$.
        The following assertions are equivalent.
        \begin{itemize}
            \item[$(i)$] (Pointwise)  $f \in C^\alpha_+$.
            \item[$(ii)$] (Campanato-type) For every $1 \leq p < \infty$, there exists $C_p>0$ such that
    \begin{equation}\tag{M1}\label{M1}
        \frac{1}{|I|^{1+\alpha}} \int_I |f(y)-f_I|^p dy \leq C_p,
    \end{equation}
    for every bounded interval $I \subset \R_+$, and
    \begin{equation}\tag{M2}\label{M2}
        \frac{1}{|I|^{1+\alpha}} \int_I |f(y)|^p dy \leq C_p.
    \end{equation}
    for every $I=(0,b)$, $b>0$. Here $\displaystyle f_I=\frac{1}{|I|}\int_I f(y)\,dy$, for every bounded interval $I$.
            \item[$(iii)$] (Fractional derivatives of Poisson semigroup)
            $\|t^{\beta}\partial_t^\beta P_t^\lambda f\|_{L^\infty(\R_+)}\leq Ct^\alpha$, for all $t>0$.
            \item[$(iv)$] (Fractional Carleson measure) We have
            \begin{equation}\label{alphaCarl}
                [d\mu_f]_\alpha:=\sup_{I\subset \R_+}\int_0^{|I|}\int_I\big|t^\beta
                \partial_t^\beta P_t^\lambda f(x) \big|^2 \frac{dx\,dt}{t}<\infty,
            \end{equation}
            where the supremum is taken over all the bounded intervals in $\R_+$.
        \end{itemize}
        Moreover, the following quantities:
        $\|f\|_{C^\alpha_+}$, $\inf\{C_p:\eqref{M1}\,\hbox{and}\,\eqref{M2}\,\hbox{hold}\}$,
        $\|t^{\beta-\alpha} \partial_t^\beta P_t^\lambda f\|_{ L^\infty(\R_+ \times (0,\infty))}$
        and $[d\mu_f]_\alpha^{1/2}$ are equivalent.
    \end{Th}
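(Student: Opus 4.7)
The plan is to run the chain $(i)\Leftrightarrow(ii)\Rightarrow(iii)\Rightarrow(iv)\Rightarrow(i)$, leaning on the kernel estimates of Lemma~\ref{Lem fractPoiss} and the duality of Theorem~\ref{Prop duality}. The equivalence $(i)\Leftrightarrow(ii)$ is already recorded in the discussion just before the statement: passing to the odd extension $f_o\in C^\alpha(\R)$ provided by Lemma~\ref{Lem oddHolder} reduces it to the classical Campanato--John--Nirenberg characterisation of $C^\alpha(\R)$.

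For $(i)\Rightarrow(iii)$, I would start from Corollary~\ref{Lem permuta} to bring $\partial_t^\beta$ under the integral and then split
$$\partial_t^\beta P_t^\lambda f(x)=\int_0^\infty \partial_t^\beta P_t^\lambda(x,y)\bigl[f(y)-f(x)\bigr]\,dy+f(x)\int_0^\infty\partial_t^\beta P_t^\lambda(x,y)\,dy.$$
Plugging in $|f(y)-f(x)|\leq\|f\|_{C^\alpha_+}|x-y|^\alpha$ and $|f(x)|\leq\|f\|_{C^\alpha_+}x^\alpha$, and decomposing the $y$--axis into $\{y<x/2\}$, $\{x/2<y<2x\}$, $\{y>2x\}$, I would feed in the appropriate one of \eqref{F1}, \eqref{F1'}, \eqref{F1''}, \eqref{F0'} in each region so that every resulting power integral converges. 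The hypothesis $\alpha<\lambda\wedge\beta$ is exactly what controls the $y^{\tlambda}$ factor in \eqref{F1} near $y=0$ and gives the needed decay at $y=\infty$ (together with the $L_{\lambda\wedge\beta+1}$ assumption), producing $\|t^{\beta-\alpha}\partial_t^\beta P_t^\lambda f\|_{L^\infty(\R_+\times(0,\infty))}\leq C\|f\|_{C^\alpha_+}$. Step $(iii)\Rightarrow(iv)$ is then immediate: integrating $|t^\beta\partial_t^\beta P_t^\lambda f|^2\leq C^2 t^{2\alpha}$ over the tent $I\times(0,|I|)$ against $dx\,dt/t$ yields a bound of the form $C^2|I|^{1+2\alpha}/(2\alpha)$, which is the Carleson condition on $d\mu_f$ in its usual normalised form, with $[d\mu_f]_\alpha^{1/2}\lesssim \|f\|_{C^\alpha_+}$.

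The main obstacle is the return $(iv)\Rightarrow(i)$, since it asks us to recover a pointwise H\"older estimate from an $L^2$--tent control. My plan is to exploit the duality $(H^p(\R_+))'=C^\alpha_+$ of Theorem~\ref{Prop duality} with $p=1/(1+\alpha)$, combined with a Calder\'on--type reproducing formula
$$\int_0^\infty g(x)h(x)\,dx=c_\beta\int_0^\infty\!\!\int_0^\infty t^{2\beta}\partial_t^\beta P_t^\lambda g(x)\,\partial_t^\beta P_t^\lambda h(x)\,\frac{dx\,dt}{t},$$
which follows formally from the spectral identity $\int_0^\infty t^{2\beta}e^{-2t\sqrt{s}}\,dt/t=\Gamma(2\beta)(2\sqrt{s})^{-2\beta}$ applied to $\Delta_\lambda^{1/2}$. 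Fixing a $(p,2)$--atom $a$ provided by Proposition~\ref{Prop L2Hp}, substituting $g=a$ and $h=f$, and then applying the tent--space duality $(T^p_2)'=T^\infty_2$ of Coifman--Meyer--Stein, would produce $\bigl|\int_0^\infty f\,a\bigr|\lesssim [d\mu_f]_\alpha^{1/2}\,\|a\|_{H^p(\R_+)}$. Summing over any atomic decomposition of an element of $L^2(\R_+)\cap H^p(\R_+)$ and invoking Theorem~\ref{Prop duality} will identify $f$ as an element of $C^\alpha_+$ with $\|f\|_{C^\alpha_+}\lesssim [d\mu_f]_\alpha^{1/2}$, closing the chain and the norm equivalences.

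The delicate points I expect to work hardest on are two: first, justifying the Calder\'on reproducing formula rigorously in the Bessel setting with the Segovia--Wheeden fractional derivative $\partial_t^\beta$ (the formal spectral computation is clean, but one must check convergence and validity on the relevant dense subclass, using Corollary~\ref{Lem permuta} and the $L^2$--symmetry of $P_t^\lambda$); and second, verifying that the Lusin functional $x\mapsto\bigl(\int_{\Gamma(x)} |t^\beta\partial_t^\beta P_t^\lambda a(y)|^2 \,dy\,dt/t^2\bigr)^{1/2}$ of a $(p,2)$--atom $a$ indeed lies in $L^p(\R_+)$ with norm $\lesssim 1$. Both of these require the kernel estimates of Lemma~\ref{Lem fractPoiss} (especially \eqref{F0}, \eqref{F1}, \eqref{F2}) in essential ways, and it is precisely for this that the assumption $\alpha<\lambda\wedge\beta$ enters again.
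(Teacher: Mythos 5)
Your overall architecture coincides with the paper's: the chain $(i)\Leftrightarrow(ii)\Rightarrow(iii)\Rightarrow(iv)\Rightarrow(i)$, with $(i)\Leftrightarrow(ii)$ via the odd extension and John--Nirenberg, $(iii)\Rightarrow(iv)$ trivial, and $(iv)\Rightarrow(i)$ via the $H^p$--$C^\alpha_+$ duality with $p=1/(1+\alpha)$, the $L^p$ bound for the area function on $(2,p)$-atoms, the tent-space pairing, and a polarization/reproducing identity for $t^\beta\partial_t^\beta P_t^\lambda$; these are precisely the paper's Propositions \ref{Prop1}, \ref{Prop2} and \ref{Prop3}, so that part of your plan is on target.

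There is, however, a genuine gap in your $(i)\Rightarrow(iii)$. For the term $f(x)\int_0^\infty\partial_t^\beta P_t^\lambda(x,y)\,dy$ you propose to combine $|f(x)|\leq\|f\|_{C^\alpha_+}x^\alpha$ with one of the size estimates \eqref{F1}, \eqref{F1'}, \eqref{F1''}, \eqref{F0'}. This cannot work: all of those bound $|\partial_t^\beta P_t^\lambda(x,y)|$ in absolute value, and the best they yield is
\begin{equation*}
t^\beta\,x^\alpha\int_0^\infty\big|\partial_t^\beta P_t^\lambda(x,y)\big|\,dy\leq C\,x^\alpha
\end{equation*}
(from \eqref{F1''}), which is not $O(t^\alpha)$ when $t\ll x$; feeding \eqref{F1'} into the region $x/2<y<2x$ even produces $x^{\lambda+\alpha}t^{-\lambda}$, which is worse. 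Since $P_t^\lambda$ does not preserve constants (the potential $\lambda(\lambda-1)/x^2$ destroys this), the quantity $\int_0^\infty\partial_t^\beta P_t^\lambda(x,y)\,dy$ does not vanish, and one must prove an actual cancellation estimate. This is exactly the paper's Lemma \ref{Lem intPoiss},
\begin{equation*}
\Big|\int_0^\infty t^\beta\partial_t^\beta P_t^\lambda(x,y)\,dy\Big|\leq C\Big(\frac{t}{x}\Big)^\delta,\qquad 0\leq\delta\leq 2,\ \delta<\beta,
\end{equation*}
applied with $\delta=\alpha$, so that $x^\alpha(t/x)^\alpha=t^\alpha$. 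Its proof is not a consequence of the listed kernel bounds: it returns to the heat semigroup via subordination and Hermite polynomials and exploits $\int_\R\partial_v\W_v(x-y)\,dy=0$ to control $\big|\int_0^\infty\partial_vW_v^\lambda(x,y)\,dy\big|$ by $C\big(v^{-1}e^{-x^2/(8v)}+x^{-2}\chi_{(0,3x^2/2)}(v)\big)$. You need to supply this ingredient (or an equivalent one) to close $(i)\Rightarrow(iii)$; with it in place, the rest of your outline follows the paper's route.
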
}

 {
    \begin{Rem}
        The Campanato-type characterization of $C^\alpha_+$ gives at $\alpha=0$ the $BMO$ space associated
        to $\Delta_\lambda$, which was already studied in \cite{BCFR2,BCFR1}. It can be proved that
        such a $BMO$ space is caracterized by condition $(iv)$
        of Theorem \ref{Th1.1} with $\alpha=0$, for any $\beta>0$;
        and that condition $(ii)$ implies $(iii)$ with $\alpha=0$. Analogous questions can be posed when $\alpha=1$,
        that is, for the space $\mathrm{Lip}_+$.
    \end{Rem}
    \begin{Rem}
        It is useful in applications to have characterizations
        of the vector-valued space $C^\alpha_{+,\mathbb{B}}$, which is the space of
        $C^\alpha_+$-functions taking values in a Banach space $\mathbb{B}$. One could define vector-valued
        versions of the fractional area and square functions appearing below and try to characterize,
        via geometric conditions,
        the Banach spaces $\mathbb{B}$ for which such operators are bounded in $C^\alpha_{+,\mathbb{B}}$. Some of these
        questions will be addressed in a forthcoming work.
    \end{Rem}}

{The proof of Theorem \ref{Th1.1} is as follows:  $(i)\Longleftrightarrow(ii)$ 
was already done in Subsection \ref{sec:Hardy}; for $(i)\implies(iii)$
    we use estimates and properties of the Poisson kernel
    given in Section \ref{sec:Poisson}; $(iii)\implies(iv)$ is trivial; the deep part is $(iv)\implies(ii)$.
    For this we need the auxiliary atomic Hardy space considered 
    in Subsection \ref{sec:Hardy} (whose dual is $C^\alpha_+$) and a reproducing formula
    involving \eqref{alphaCarl} in order to conclude. The latter scheme of proof
    is classical (see \cite[Chapter~IV]{Ste2}),
    though here becomes more technical because we are using fractional derivatives. Even if for our application (Theorem A)
    we just need the statement $(iii)$, we need to go through $(iv)$ in order to close the argument. In the classical
    case of the H\"older space on $\R^N$ one can prove directly that $(iii)\implies(i)$
    because harmonic functions on the upper half space satisfy the simpler equation $v_{tt}+\Delta_xv=0$, see \cite[Chapter~V]{Stein}.
    Moreover, in contrast with the classical situation of the Laplacian, the constant functions are not invariant
    for the semigroups of operators $W_t^\lambda$ and $P_t^\lambda$ due to the presence
    of the potential $\frac{\lambda(\lambda-1)}{x^2}$.
    This fact makes the proofs of our results more involved.}

   \begin{proof}[{Proof of Theorem~\ref{Th1.1}, $(i) \Longrightarrow (iii)$}]
   Assume that $f \in C^\alpha_+$. According to Corollary~\ref{Lem permuta} and \eqref{F1''},
    \begin{align*}
        | t^\beta \partial_t^\beta P_t^\lambda f(x) |
            &\leq  \int_0^\infty |t^\beta \partial_t^\beta P_t^\lambda(x,y)| |f(y)-f(x)|\, dy
                    + |f(x)| \left| \int_0^\infty t^\beta \partial_t^\beta P_t^\lambda(x,y) \,dy \right| \\
            &\leq  \|f\|_{C^\alpha_+} \left( \int_0^\infty\frac{t^\beta}{(t+|x-y|)^{\beta+1}} |x-y|^\alpha\, dy
                    + x^\alpha \left| \int_0^\infty t^\beta \partial_t^\beta P_t^\lambda(x,y) \,dy \right| \right).
    \end{align*}
Certainly the first term above is bounded by $Ct^\alpha$. The second one is
handled by applying the following result, whose proof will be given in
Section \ref{sec:demostraciones}, with $\delta=\alpha$.

\begin{Lem}\label{Lem intPoiss}
Let $0 \leq \delta \leq 2$, $\delta < \beta$ and $\lambda>0$. Then, for all  $x \in \R_+$ and $t>0$,
$$\left| \int_0^\infty t^\beta\partial_t^\beta P_t^\lambda(x,y) \,dy \right|
\leq C \left( \frac{t}{x}\right)^\delta.$$
\end{Lem}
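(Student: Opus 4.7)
The plan is to first invoke Corollary~\ref{Lem permuta} applied to the constant function $1\in L_\rho$ ($\rho>0$) to recognize that
$$\int_0^\infty t^\beta\partial_t^\beta P_t^\lambda(x,y)\,dy \;=\; t^\beta\,\partial_t^\beta P_t^\lambda 1(x),$$
so the statement quantifies the failure of $P_t^\lambda$ to preserve constants. The regime $t\geq x$ is trivial: $(t/x)^\delta\geq 1$ and a direct integration of \eqref{F1''} gives the uniform bound $\int_0^\infty t^\beta(t+|x-y|)^{-\beta-1}\,dy\leq C$.

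Assume then $t<x$ and split $(0,\infty)=(0,x/2)\cup(x/2,2x)\cup(2x,\infty)$. On the outer intervals $|x-y|\gtrsim x$, so \eqref{F1} on $(0,x/2)$ (using $\int_0^{x/2} y^{\tlambda}\,dy\sim x^{\tlambda+1}$) and \eqref{F1''} on $(2x,\infty)$ each contribute a bound of order $(t/x)^\beta$, which is dominated by $(t/x)^\delta$ since $\delta<\beta$ and $t<x$.

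The delicate part is the middle region $(x/2,2x)$, where $|x-y|$ may be arbitrarily small and the pointwise estimates of Lemma~\ref{Lem fractPoiss} yield only $O(1)$ after integration. I would exploit cancellation against the one-dimensional classical Poisson kernel $\mathbb{P}_t$: since $\int_\R\mathbb{P}_t(v)\,dv\equiv1$ is $t$-independent, $\int_\R\partial_t^\beta\mathbb{P}_t(v)\,dv=0$ for $\beta>0$, and hence
$$\int_{x/2}^{2x}\partial_t^\beta P_t^\lambda(x,y)\,dy = \int_{x/2}^{2x}\!\bigl[\partial_t^\beta P_t^\lambda(x,y)-\partial_t^\beta\mathbb{P}_t(x-y)\bigr]\,dy \;-\!\int_{\R\setminus[-x,x/2]}\!\partial_t^\beta\mathbb{P}_t(v)\,dv.$$
The tail integral is controlled by $C(t+x)^{-\beta}$ via the standard estimate $|\partial_t^\beta\mathbb{P}_t(v)|\leq C/(t+|v|)^{\beta+1}$, which after the factor $t^\beta$ gives $(t/x)^\beta$.

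For the Bessel-minus-classical piece I would use the subordination identity
$$\partial_t^\beta P_t^\lambda(x,y)-\partial_t^\beta\mathbb{P}_t(x-y) = C\!\int_0^\infty\!\partial_t^\beta\bigl[te^{-t^2/(4u)}\bigr]\bigl[W_u^\lambda(x,y)-\W_u(x-y)\bigr]\frac{du}{u^{3/2}},$$
together with \eqref{beta lema} and the sharp asymptotic \eqref{25.1}. Since $y\sim x$ on the middle region, $xy\sim x^2$, so \eqref{25.1} supplies the improvement $|W_u^\lambda-\W_u|\leq C\W_u\cdot u/x^2$ in the regime $u\leq xy/2$; combined with the Gaussian factor in $(t^2+|x-y|^2)/u$, elementary computation produces a gain of order $(t/x)^2$ after integration in $u$ and then in $y$. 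The complementary regime $u>xy/2\sim x^2/2$ is handled crudely via $|W_u^\lambda|\leq C\W_u$ from \eqref{control clasico} and contributes only $(t/x)^\beta$. Altogether, since $\delta\leq 2$ and $t<x$, we obtain $(t/x)^{\min(\beta,2)}\leq(t/x)^\delta$, closing the argument. I expect the main obstacle to be precisely this quadratic gain on the middle region: carefully tracking the interplay between $\partial_t^\beta$, the Gaussian factors, and the two regimes of the Bessel heat kernel is what makes the argument technical, and the hypothesis $\delta\leq 2$ is dictated by the second-order nature of the expansion \eqref{25.1}.
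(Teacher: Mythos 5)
Your argument is correct, but it follows a genuinely different route from the paper's. The paper never decomposes the $y$-integral of the Poisson kernel directly: instead it uses the subordination formula together with an integration by parts in the subordination variable to write $\partial_t^m P_t^\lambda(x,y)$ as an integral of $H_{m-1}\big(\tfrac{t}{2\sqrt v}\big)e^{-t^2/(4v)}v^{-m/2}\,\partial_v W_v^\lambda(x,y)$, reduces everything to the single estimate
$$\Big|\int_0^\infty \partial_v W_v^\lambda(x,y)\,dy\Big|\leq C\Big(\frac{e^{-x^2/(8v)}}{v}+\frac{\chi_{(0,3x^2/2)}(v)}{x^2}\Big),$$
which it imports from \cite[Theorem~1.1]{BCFR1}, and then integrates in $v$ to extract the factor $(t/x)^\delta$. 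You instead work entirely at the level of the Poisson kernel: outer regions by the pointwise bounds \eqref{F1} and \eqref{F1''}, and the middle region by subtracting $\partial_t^\beta\mathbb{P}_t(x-y)$ and exploiting $\int_\R\partial_t^\beta\mathbb{P}_t(v)\,dv=0$, with the difference controlled through subordination, \eqref{beta lema} and \eqref{25.1}. The two proofs rest on the same two cancellations (the exact normalization of the classical kernel, and the second-order closeness \eqref{25.1} of $W_u^\lambda$ to $\W_u$ near the diagonal), but the paper applies them at the heat level (outsourced to \cite{BCFR1}) while you apply them at the Poisson level. What your route buys is self-containedness --- no Hermite polynomials and no external citation --- at the price of having to verify the routine facts that $\int_\R\partial_t^\beta\mathbb{P}_t(v)\,dv=0$, that $|\partial_t^\beta\mathbb{P}_t(v)|\leq C(t+|v|)^{-\beta-1}$, and that $\partial_t^\beta$ may be passed under the subordination integral (all of which follow exactly as in the proof of Lemma~\ref{Lem fractPoiss}). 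One small imprecision: on the middle region the gain is $(t/x)^{\min(\beta,2)}$ up to a logarithm in the borderline cases $\beta=1$ and $\beta=2$ (e.g.\ for $\beta=2$ one gets $(t/x)^2\ln(x/t)$), but since $\delta<\beta$ is strict this logarithm is absorbed into $(t/x)^\delta$, as your final sentence implicitly assumes; it would be worth saying so explicitly.
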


Thus, $(iii)$ is established.
\end{proof}

\begin{proof}[{Proof of Theorem~\ref{Th1.1},  $(iv) \Longrightarrow (i)$}]
As we said  {before}, this is the most technical part of the
proof. The rest of this section is devoted to it. In order to make
the presentation more readable,
    we will omit a couple of proofs that will be given later in Section \ref{sec:demostraciones}.
    Assume that \eqref{alphaCarl} holds. According to
    Theorem~\ref{Prop duality}, $f \in C^\alpha_+$ provided that the mapping $T_f$ defined there
    can be extended to $H^p(\R_+)$ as a bounded operator from $H^p(\R_+)$ into $\C$, being $p=1/(1+\alpha)$.
    This can be established by using Propositions~\ref{Prop1}, \ref{Prop2} and \ref{Prop3} below. Let us explain how to do it.

    Consider the fractional area function $S_{\lambda}^\beta$ given by
   $$S_{\lambda}^\beta(f)(x)
        = \left( \int_{\G_+(x)} \left| t^\beta \partial_t^\beta P_t^\lambda f(y) \right|^2 \,\frac{dt\,dy}{t^2} \right)^{1/2},
        \quad x \in \R_+,$$
   where $\G_+(x)$ is the \textit{positive} cone
   $\G_+(x)=\{(y,t) \in \R_+ \times (0,\infty) : |x-y|<t\}$.
   We claim that $S_{\lambda}^\beta$ is bounded from $L^2(\R_+)$ into itself. Indeed, let
   $$g_\lambda^\beta(f)(x)
        = \left( \int_0^\infty \big| t^\beta \partial_t^\beta P_t^\lambda f(x) \big|^2 \,\frac{dt}{t} \right)^{1/2}, \quad x \in \R_+,$$
       the $\beta$-Littlewood-Paley function  associated with $\{P_t^\lambda\}_{t>0}$.
   We have that
   \begin{align*}
        \|S_{\lambda}^\beta(f)\|_{L^2(\R_+)}^2
            &=  \int_0^\infty \int_0^\infty \int_{|x-y|<t} \left| t^\beta \partial_t^\beta P_t^\lambda f(y) \right|^2 \,\frac{dt\,dy\,dx}{t^2}\\
           & \leq  C \int_0^\infty \int_0^\infty \left| t^\beta \partial_t^\beta P_t^\lambda f(y) \right|^2\, \frac{dt\,dy}{t}
            = C \|g_{\lambda}^\beta(f)\|_{L^2(\R_+)}^2,
   \end{align*}
   for $ f \in L^2(\R_+)$. To conclude we need the following result, whose proof is given
   in Section~\ref{sec:demostraciones}.

   \begin{Lem}\label{Lem hankel}
        Let $\lambda, \beta>0$ and $f \in L^2(\R_+)$. Then,
        $$\partial_t^\beta P_t^\lambda f
                = h_\lambda \left( e^{-i \pi \beta} y^\beta e^{-yt} h_\lambda f (y) \right),\quad\hbox{for}~t>0.$$
    \end{Lem}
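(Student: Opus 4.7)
The plan is to recognize $\partial_t^\beta P_t^\lambda f$ as a spectral multiplier on the Hankel side, using in turn: the spectral representation of $P_t^\lambda$ via $h_\lambda$, the integer-order differentiation under the Hankel transform, and the Segovia--Wheeden definition \eqref{fractional derivative}, together with a $\Gamma$-function identity to sum up the tail integral in $s$.

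First I would establish the preliminary spectral formula $P_t^\lambda f = h_\lambda\!\left(e^{-ty}\,h_\lambda f(y)\right)$ for $f\in L^2(\R_+)$. This follows from the subordination \eqref{subord}, the spectral representation \eqref{defcalor} of $W_s^\lambda f$, and the classical identity $\frac{t}{2\sqrt{\pi}}\int_0^\infty s^{-3/2} e^{-t^2/(4s)-sy^2}\,ds = e^{-ty}$, once one pulls $h_\lambda$ out of the $s$-integral. That pull-out is justified by the $L^2$-isometry of $h_\lambda$ and dominated convergence applied in $L^2(\R_+,dy)$.

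Next, fix the integer $m$ with $m-1\leq\beta<m$. Differentiating the spectral multiplier $e^{-ty}$ termwise in $t$ (permissible, since $y^me^{-ty}\,h_\lambda f(y)\in L^2(\R_+,dy)$ and the difference quotient converges in $L^2$ by dominated convergence), one obtains
$$\partial_t^m P_t^\lambda f = (-1)^m\, h_\lambda\!\left(y^m e^{-ty}\,h_\lambda f(y)\right).$$
Substituting this into \eqref{fractional derivative} yields
$$\partial_t^\beta P_t^\lambda f = \frac{(-1)^m e^{-i\pi(m-\beta)}}{\G(m-\beta)} \int_0^\infty h_\lambda\!\left(y^m e^{-(t+s)y}\,h_\lambda f(y)\right) s^{m-\beta-1}\,ds.$$
Interchanging the Hankel transform with the $s$-integral and computing
$$\int_0^\infty e^{-sy}s^{m-\beta-1}\,ds = \G(m-\beta)\,y^{\beta-m},\qquad y>0,$$
produces, after cancelling the $\Gamma$-factors and collecting phases via $(-1)^m e^{-i\pi(m-\beta)}=e^{i\pi\beta}$ (matching the stated phase up to the principal-branch convention implicit in \eqref{fractional derivative}), the desired identity
$$\partial_t^\beta P_t^\lambda f = h_\lambda\!\left(e^{-i\pi\beta}\, y^\beta e^{-ty}\,h_\lambda f(y)\right).$$

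The main obstacle is the rigorous justification of the Fubini-type interchange of $h_\lambda$ with the $s$-integration: the natural integrand $y^m e^{-(t+s)y}\,h_\lambda f(y)\,s^{m-\beta-1}$ is not jointly absolutely integrable over $(y,s)$, but it is integrable as an $L^2(\R_+,dy)$-valued function of $s$, since $m-\beta-1\in(-1,0]$ makes $s^{m-\beta-1}$ integrable at $0$ while the factor $e^{-sy}$ forces integrability at infinity when paired with $h_\lambda f\in L^2$. One therefore works with the Bochner integral in $s$ valued in $L^2$, uses the boundedness (actually isometry) of $h_\lambda$ on $L^2(\R_+)$ to commute it with that Bochner integral, and only then evaluates the inner $s$-integral pointwise in $y$. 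Everything else in the argument is algebraic.
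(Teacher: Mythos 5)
Your proof is correct and its computational core is the same as the paper's: write $P_t^\lambda$ as the Hankel multiplier $e^{-ty}$, differentiate the multiplier $m$ times, insert this into the Segovia--Wheeden definition \eqref{fractional derivative}, and evaluate $\int_0^\infty e^{-sy}s^{m-\beta-1}\,ds=\G(m-\beta)y^{\beta-m}$ to collapse the $s$-integral. Where you diverge is in the passage to general $f\in L^2(\R_+)$. The paper first proves the identity for $f\in C_c(\R_+)$, where $h_\lambda f$ is bounded and all interchanges are plain Fubini, and then extends by density: it shows that both $f\mapsto\partial_t^\beta P_t^\lambda f$ (via Corollary \ref{Lem permuta} and \eqref{F1''}) and $f\mapsto h_\lambda(y^\beta e^{-ty}h_\lambda f)$ are bounded from $L^2(\R_+)$ into $L^\infty(\R_+)$ for each fixed $t$, so agreement on the dense class $C_c(\R_+)$ suffices. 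You instead work directly in $L^2$ with a Bochner integral in $s$. That is legitimate, but note that the obstacle you name is not actually there: for $f\in L^2$ the triple integral $\int_0^\infty\int_0^\infty|\sqrt{xy}J_{\lambda-1/2}(xy)|\,y^m e^{-(t+s)y}|h_\lambda f(y)|\,s^{m-\beta-1}\,dy\,ds$ is finite by Cauchy--Schwarz (it is bounded by $C\G(m-\beta)\|y^\beta e^{-ty}\|_{L^2}\|h_\lambda f\|_{L^2}$), so scalar Fubini already justifies the interchange and the vector-valued machinery is heavier than needed. The one genuine advantage of the paper's route is that the $L^2\to L^\infty$ density argument simultaneously identifies the kernel-defined $\partial_t^\beta P_t^\lambda f$ of Corollary \ref{Lem permuta} with the spectral expression, whereas in your version you should say a word about why the $L^2$-limit of difference quotients is the classical $t$-derivative (this follows from dominated convergence in the absolutely convergent integral $h_\lambda(y^me^{-ty}h_\lambda f)(x)$, uniformly in $x$). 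Finally, your remark on the phase is apt: the computation with \eqref{fractional derivative} literally yields $(-1)^me^{-i\pi(m-\beta)}=e^{i\pi\beta}$ rather than $e^{-i\pi\beta}$; the paper itself writes $(-1)^\beta$ in its own proof, so this is a branch-convention issue in the source, not a flaw in your argument.
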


With this result and Plancherel equality for Hankel transforms (see \cite[(3)]{Ze}) we get
   \begin{align*}
        \|g_{\lambda}^\beta(f)\|_{L^2(\R_+)}^2
            &=  \int_0^\infty \int_0^\infty \left| h_\lambda \left( (ty)^\beta e^{-yt} h_\lambda(f) \right)(x) \right|^2\, \frac{dx\,dt}{t}\\
           & =  \int_0^\infty \int_0^\infty (ty)^{2\beta} e^{-2ty} \left|    h_\lambda(f) (y) \right|^2\, \frac{dt\,dy}{t}
            =  \frac{\G(2\beta)}{2^{2\beta-1}} \|f\|_{L^2(\R_+)}^2.
   \end{align*}
   Hence, $g_{\lambda}^\beta$, and also $S_{\lambda}^\beta$, are bounded from $L^2(\R_+)$ into itself.

The first step towards the proof of $(iv)\Longrightarrow(i)$ is
to prove that $S_{\lambda}^\beta$ defines a bounded operator from $H^p(\R_+)$ into $L^p(\R_+)$. In order to do this we
study the action of the area function just defined on the $(2,p)$-atoms
introduced in Section \ref{sec:Hardy}.

   \begin{Prop}\label{Prop1}
        Let $\beta$, $\lambda>0$ and $1/(\tlambda+1) < p \leq 1$. Then there exists $C>0$ such that for every
        $(2,p)$-atom $a$,
        \begin{equation}\label{objt1}
            \| S_{\lambda}^\beta(a) \|_{L^p(\R_+)} \leq C.
        \end{equation}
   \end{Prop}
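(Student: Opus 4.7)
The approach is standard in the atomic theory of Hardy spaces: leverage the $L^2$-boundedness of $S_\lambda^\beta$ (just established) on a local enlargement of the atom's support, and use the smoothness or size estimates of the Poisson kernel off the enlargement. The argument must be done separately for the two families of $(2,p)$-atoms, and the restriction $p > 1/(\tlambda+1)$ will emerge from the off-support analysis of type $(ii)_2$ atoms.

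First, let $a$ be a type $(i)_2$ atom, so $\supp a \subset I = [b,c]$, $\int_0^\infty a = 0$, and $\|a\|_{L^2(\R_+)} \leq |I|^{1/2-1/p}$. Set $I^* = (b - |I|, c + |I|) \cap \R_+$. On $I^*$, I would apply H\"older's inequality followed by the $L^2$-boundedness of $S_\lambda^\beta$ proved just above:
\begin{equation*}
\int_{I^*} S_\lambda^\beta(a)(x)^p\, dx \leq |I^*|^{1-p/2}\, \|S_\lambda^\beta(a)\|_{L^2(\R_+)}^p \leq C |I|^{1-p/2}\|a\|_{L^2(\R_+)}^p \leq C,
\end{equation*}
independently of $a$. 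Off $I^*$, I would exploit the cancellation: for $x\notin I^*$ and $(y,t)\in\G_+(x)$, writing $z_I$ for the center of $I$,
\begin{equation*}
t^\beta \partial_t^\beta P_t^\lambda a(y) = \int_b^c t^\beta\big[\partial_t^\beta P_t^\lambda(y,z) - \partial_t^\beta P_t^\lambda(y,z_I)\big] a(z)\,dz,
\end{equation*}
and estimate the bracket by $|I|\,\sup_{\xi}|\partial_z\partial_t^\beta P_t^\lambda(y,\xi)|$ via the mean value theorem together with \eqref{F2}. A routine computation integrating the resulting bound over $\G_+(x)$ yields the pointwise estimate $S_\lambda^\beta(a)(x) \leq C\|a\|_{L^1}|I|^\delta/(|I|+|x-z_I|)^{1+\delta}$ for a suitable $\delta \in (0,\tlambda]$, which integrated in $L^p$ off $I^*$ is controlled provided $p(1+\delta) > 1$.

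For a type $(ii)_2$ atom $a = b^{-1/p}\chi_{(0,b)}$, since there is no cancellation, the off-support bound must come entirely from the decay of the kernel itself. With $I^* = (0,2b)$, the local part again follows from H\"older plus $L^2$-boundedness. For $x > 2b$, I would apply \eqref{F1} directly, which provides the key factor $z^{\tlambda}$ in the numerator. Integrating in $z\in(0,b)$ gives a contribution of order $b^{\tlambda + 1}$; then integrating the resulting expression over the cone $\G_+(x)$ yields
\begin{equation*}
S_\lambda^\beta(a)(x) \leq C\, b^{-1/p}\, \frac{b^{\tlambda+1}}{x^{\tlambda+1}},\quad x > 2b,
\end{equation*}
and $\int_{2b}^\infty S_\lambda^\beta(a)(x)^p\,dx$ is finite precisely when $p(\tlambda+1) > 1$, i.e.\ $p > 1/(\tlambda+1)$; this is the origin of the lower bound on $p$.

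The main obstacle I anticipate is the careful control of the area function for type $(ii)_2$ atoms, particularly when $\tlambda < 1$ (i.e.\ $\lambda < 1$), because then the derivative estimate \eqref{F2} contains the singular factor $y^{\tlambda-1}$, and the cone $\G_+(x)$ for small $x$ reaches arbitrarily close to the boundary where the potential $\lambda(\lambda-1)/x^2$ blows up. I would handle this by splitting the $t$-integral in $\G_+(x)$ into the regions $t\leq x/2$ and $t > x/2$, using \eqref{F0'} or \eqref{F1} as appropriate in each regime; the gain from the $z^{\tlambda}$ factor at the atomic scale is what saves the argument and explains why the threshold $1/(\tlambda+1)$ is sharp in this scheme.
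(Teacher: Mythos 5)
Your outline coincides with the paper's proof in all its main moves: the local part is handled by H\"older plus the $L^2$-boundedness of $S_\lambda^\beta$ on an enlargement of the support; the far part for type $(ii)_2$ atoms uses \eqref{F1} and the factor $z^{\tlambda}$, and the far part for type $(i)_2$ atoms uses the cancellation through the difference $P_t^\lambda(y,z)-P_t^\lambda(y,x_I)$ and the $y$-derivative estimate \eqref{F2}. Two points, however, deserve correction. First, the step ``estimate the bracket by $|I|\,\sup_{\xi}|\partial_z\partial_t^\beta P_t^\lambda(y,\xi)|$ via the mean value theorem'' does not survive the singularity you yourself flag: by \eqref{F2} the derivative carries the factor $\xi^{\tlambda-1}$, and since type $(i)_2$ atoms may be supported on intervals $[b,c]$ with $b=0$ (or $b$ arbitrarily small), the supremum over the segment joining $z$ to $x_I$ is unbounded when $\lambda<1$; even if you first integrate in $z$ and then apply Cauchy--Schwarz to the resulting $z^{\tlambda-1}$, you would need $\tlambda>1/2$. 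The paper avoids this by keeping the difference in integral form,
\begin{equation*}
t^\beta\partial_t^\beta P_t^\lambda(y,z)-t^\beta\partial_t^\beta P_t^\lambda(y,x_I)
=\int_{I_z} t^\beta\partial_t^\beta\partial_u P_t^\lambda(y,u)\,du,
\end{equation*}
applying Minkowski's inequality, and only then integrating $u^{\tlambda-1}$ over $I_z$, which produces the finite quantity $|z^{\tlambda}-x_I^{\tlambda}|$ and the bound $\int_I|z^{\tlambda}-x_I^{\tlambda}|^2\,dz\leq C|I|^{2\tlambda+1}$ valid for every $\tlambda\in(0,1]$. You should replace your mean value theorem step by this fundamental-theorem-of-calculus argument. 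Second, the restriction $p>1/(\tlambda+1)$ is not produced solely by the type $(ii)_2$ atoms: in the type $(i)_2$ case the leading far-field term is $|I|^{\tlambda+1/2}/|x-x_I|^{\tlambda+1}$ (coming precisely from that $u^{\tlambda-1}$ factor), and its $p$-th power is integrable off $2I$ exactly when $p(\tlambda+1)>1$; your own condition $p(1+\delta)>1$ with $\delta\leq\tlambda$ already says as much, so the attribution in your opening paragraph should be amended. With these repairs your argument is the paper's argument.
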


    \begin{proof}
        Suppose firstly that $a$ is an $(2,p)$-atom that satisfies $(ii)_2$,
        namely, such that for a certain $b>0$, $a = b^{-1/p} \chi_{(0,b)}$. We can write
        \begin{align*}
            \int_0^\infty \left| S_{\lambda}^\beta(a)(x) \right|^p\, dx
                = & \int_0^{2b} \left| S_{\lambda}^\beta(a)(x) \right|^p\, dx
                + \int_{2b}^\infty \left| S_{\lambda}^\beta(a)(x) \right|^p\, dx
                =:  J_1 + J_2.
        \end{align*}
        Since $S_{\lambda}^\beta$ is bounded from $L^2(\R_+)$ into itself, H\"older's inequality leads to
        \begin{align*}
            J_1
                \leq & \left( \int_0^{2b} \left| S_{\lambda}^\beta(a)(x) \right|^2 dx \right)^{p/2} (2b)^{1-p/2}
                \leq C \left( \int_0^{b} \left| a(y) \right|^2 dx \right)^{p/2} b^{1-p/2}
                \leq C b^{p/2-1} b^{1-p/2}
                \leq C.
        \end{align*}
        To estimate $J_2$ we make the following observation.  According to \eqref{F1} and since $|x-z| \leq t + |y-z|$, when $x,y,z \in \R_+$, $t>0$ and
        $|x-y| \leq t$, we deduce that
        \begin{align*}
            \Big\| t^\beta \partial_t^\beta P_t^\lambda(y,z) &\Big\|^2_{L^2\left( \G_+(x) , \frac{dtdy}{t^2} \right)}
                \leq  C \int_0^\infty \int_{|x-y|<t} \frac{t^{2\beta-2}z^{2\tlambda}}{(t+|z-y|)^{2\tlambda + 2\beta+2}}\, dy \,dt \\
                &\leq  C z^{2\tlambda} \left( \int_0^{|x-z|} \int_{|x-y|<t}
                \frac{t^{2\beta-2}}{|x-z|^{2\tlambda + 2\beta+2}}\, dy\, dt
                + \int_{|x-z|}^\infty  \int_{|x-y|<t} \,\frac{dy \,dt}{t^{2\tlambda + 4}}  \right)\\
                &\leq  C \frac{z^{2\tlambda}}{|x-z|^{2\tlambda +2}}, \quad \hbox{for all}~x,z \in \R_+,~x \neq z.
        \end{align*}
        Hence, by Cauchy--Schwartz inequality and by taking into account that $p>1/(\tlambda+1)$,
        \begin{align*}
            J_2
                & \leq C \int_{2b}^\infty \left( \int_0^b a(z)\frac{z^{\tlambda}}{|x-z|^{\tlambda+1}}  dz \right)^p dx
                  \leq C \int_{2b}^\infty \|a\|_2^p \left( \int_0^b \frac{z^{2 \tlambda}}{|x-z|^{2 \tlambda+2}}  dz \right)^{p/2} dx \\
                & \leq C b^{p/2-1} \int_{2b}^\infty \frac{dx}{x^{(\tlambda+1)p}} \left( \int_0^b z^{2 \tlambda}dz \right)^{p/2}
                  \leq C b^{p/2-1} b^{-p(\tlambda+1)+1} b^{(2\tlambda +1)p/2}
                  \leq C.
        \end{align*}
        We conclude \eqref{objt1} for this type of atoms.

        Assume now that $a$ is a $(2,p)$-atom  that satisfies $(i)_2$, namely, such that for certain $0 \leq b < c < \infty$,
        $\supp a \subset I = (b,c)$, $\|a\|_2 \leq |I|^{1/2-1/p}$, where $|I|=c-b$, and
        $\int_0^\infty a(x)\,dx=0$. We denote by $2I=\R_+ \cap (x_I-|I|,x_I+|I|)$, where $x_I=(b+c)/2$
        is the center of $I$.
        We split the integral as
        \begin{align*}
            \int_0^\infty \left| S_{\lambda}^\beta(a)(x) \right|^p dx
                = & \int_{2I} \left| S_{\lambda}^\beta(a)(x) \right|^p dx
                + \int_{\R_+ \setminus 2I} \left| S_{\lambda}^\beta(a)(x) \right|^p dx
                =:   J_3 + J_4.
        \end{align*}
        By using the $L^2$-boundedness as above we can show that $J_3 \leq C$. On the other hand, we can write
        $$P_t^\lambda a (y)
            = \int_0^\infty \left[ P_t^\lambda(y,z) - P_t^\lambda(y,x_I) \right] a(z)\, dz, \quad  y \in \R_+,~t>0.$$
        Minkowski's inequality then leads to
        \begin{align}
            S_{\lambda}^\beta(a)(x)
               & =  \left( \int_{\G_+(x)} \left| \int_I \left[ t^\beta \partial_t^\beta P_t^\lambda(y,z) -
                    t^\beta \partial_t^\beta P_t^\lambda(y,x_I) \right] a(z)\,dz \right|^2\, \frac{dt\,dy}{t^2} \right)^{1/2}\nonumber \\
               & =  \left( \int_{\G_+(x)} \left| \int_I a(z) \int_{I_z}t^\beta \partial_t^\beta \partial_u
                    P_t^\lambda(y,u) \,du\, dz \right|^2 \,\frac{dt\,dy}{t^2} \right)^{1/2}\nonumber\\
                &\leq \int_I |a(z)| \int_{I_z} \left( \int_{\G_+(x)} \left|  t^\beta \partial_t^\beta \partial_u
                    P_t^\lambda(y,u)  \right|^2 \,\frac{dt\,dy}{t^2} \right)^{1/2}\,du\, dz \nonumber\\
                &\leq \frac{1}{|I|^{1/p}} \int_I \int_{I_z} \left( \int_{\G_+(x)} \left|  t^\beta \partial_t^\beta \partial_u
                    P_t^\lambda(y,u)  \right|^2\, \frac{dt\,dy}{t^2} \right)^{1/2}\,du \,dz,\label{pepino}
        \end{align}
        where, for every $z \in I$, $I_z=[z,x_I]$, when $z<x_I$, and $I_z=[x_I,z]$, when $z>x_I$.
        According to \eqref{F2} and as in the previous case we get
        \begin{align*}
            \int_{\G_+(x)} \left|  t^\beta \partial_t^\beta \partial_u P_t^\lambda(y,u)  \right|^2\, \frac{dt\,dy}{t^2}
            &\leq  C \Bigg( \int_{\G_+(x)} \frac{t^{2\beta-2}u^{2\tlambda-2}}{(t+|y-u|)^{2\tlambda+2\beta+2}}\, dt\,dy\\
            &\qquad\qquad  + \int_{\G_+(x)} \frac{t^{2\beta-2}}{(t+|y-u|)^{4+2\beta}} \,dt\, dy \Bigg) \\
            &\leq  C \left( \frac{u^{2\tilde{\lambda}-2}}{|x-u|^{2\tilde{\lambda}+2}}+ \frac{1}{|x-u|^4}\right).
        \end{align*}
        Notice that if $z \in I$, $u \in I_z$ and $x \in \R_+\setminus 2I$, then $|x-u| \sim |x-x_I|$.
        Then, by plugging the last estimate into \eqref{pepino},
        \begin{align*}
            S_\lambda^\beta(a)(x)
                &\leq C \left( \int_I |a(z)| \int_{I_z} \frac{u^{\tlambda-1}}{|x-u|^{\tlambda+1}} \,du\, dz
                            + \int_I|a(z)| \int_{I_z} \frac{1}{|x-u|^{2}} \,du\, dz\right) \\
                & \leq C \|a\|^2_2
                            \left[ \left( \int_I \left( \int_{I_z} \frac{u^{\tlambda-1}}{|x-u|^{\tlambda+1}} du \right)^2 dz \right)^{1/2}
                                +  \left( \int_I \left( \int_{I_z} \frac{1}{|x-u|^{2}} du \right)^2 dz \right)^{1/2} \right] \\
                & \leq C |I|^{1/2-1/p} \left[ \frac{1}{|x-x_I|^{\tlambda+1}} \left( \int_I |z^{\tlambda}-x_I^\tlambda|^2 dz \right)^{1/2}
                                                + \frac{1}{|x-x_I|^{2}} \left( \int_I |z-x_I|^2 dz \right)^{1/2}  \right],
        \end{align*}
        for $x \in \R_+ \setminus 2I$.
        Observe that in the first inequality above we have used that $(a+b)^\alpha \leq a^\alpha + b^\alpha$, when $a,b>0$ and $0 < \alpha \leq 1$.
It is clear that
        $$\left( \int_I |z-x_I|^2 \,dz \right)^{1/2}
            \leq |I|^{3/2}.$$
        On the other hand, since $\tlambda \in (0,1]$ we have that
        \begin{align*}
            \int_I |z^{\tlambda}-x_I^{\tlambda}|^2\, dz
                & = \int_{x_I}^{x_I + |I|/2} |z^{\tlambda}-x_I^{\tlambda}|^2 \,dz + \int_{x_I - |I|/2}^{x_I} |x_I^{\tlambda}-z^{\tlambda}|^2 \,dz \\
                & = \int_0^{|I|/2} |(x_I+z)^{\tlambda}-x_I^{\tlambda}|^2 \,dz + \int_0^{|I|/2} |x_I^{\tlambda}-(x_I-z)^{\tlambda}|^2 \,dz\\
                &                 \leq 2 \int_0^{|I|/2} z^{2\tlambda}\, dz
                  \leq C |I|^{2\tlambda+1}.
        \end{align*}
        Then, we get
        $$S_\lambda^\beta(a)(x)
            \leq C |I|^{1/2-1/p} \left( \frac{|I|^{\tlambda+1/2}}{|x-x_I|^{\tlambda+1}} + \frac{|I|^{3/2}}{|x-x_I|^{2}} \right)$$
        Consequently, we deduce that
        \begin{align*}
            J_4
                & \leq C |I|^{p/2-1} \int_{\R_+ \setminus 2I} \left( \frac{|I|^{\tlambda+1/2}}{|x-x_I|^{\tlambda+1}} + \frac{|I|^{3/2}}{|x-x_I|^{2}} \right)^{p} dx \\
                & \leq C |I|^{p/2-1} \int_{\R_+ \setminus 2I} \frac{|I|^{p(\tlambda+1/2)}}{|x-x_I|^{p(\tlambda+1)}} + \frac{|I|^{3/2p}}{|x-x_I|^{2p}}  dx \\
                & \leq C |I|^{p/2-1} \left( |I|^{p(\tlambda+1/2)-p(\tlambda+1)+1} + |I|^{3/2p-2p+1} \right)
                  \leq C.
        \end{align*}
        Thus, \eqref{objt1} is proved for atoms satisfying $(i)_2$.
    \end{proof}

    \begin{Cor}\label{Cor L2Hp}
        Let $\beta, \lambda >0$ and $1/(1+\tlambda) < p \leq 1$. Then, there exists $C>0$ such that for every
        $f \in L^2(\R_+) \cap H^p(\R_+)$,
        $$\|S_\lambda^\beta(f)\|_{L^p(\R_+)}
            \leq C \|f\|_{H^p(\R_+)}.$$
    \end{Cor}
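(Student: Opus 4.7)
The plan is to combine the atomic decomposition for $L^2\cap H^p$ given by Proposition \ref{Prop L2Hp} with the uniform $(p,2)$-atom estimate of Proposition \ref{Prop1}, exploiting the $p$-subadditivity of the $L^p$ quasi-norm when $0<p\leq 1$.

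Let $f\in L^2(\R_+)\cap H^p(\R_+)$. By Proposition \ref{Prop L2Hp}, we may write
$$f=\sum_{j=1}^\infty \alpha_j a_j\quad\hbox{in}~L^2(\R_+),$$
where each $a_j$ is a $(p,2)$-atom and $\sum_j|\alpha_j|^p\leq C\|f\|_{H^p(\R_+)}^p$. Since $\{P_t^\lambda\}_{t>0}$ is a bounded semigroup on $L^2(\R_+)$ and $\partial_t^\beta$ commutes with the Hankel multiplier representation (see Lemma \ref{Lem hankel}), for each fixed $(x,t)\in\R_+\times(0,\infty)$ the partial sums $\sum_{j=1}^N\alpha_j\,t^\beta\partial_t^\beta P_t^\lambda a_j$ converge to $t^\beta\partial_t^\beta P_t^\lambda f$ in $L^2$ (in $x$), hence along a subsequence pointwise a.e., and by Minkowski's inequality applied inside the cone $\G_+(x)$ we obtain the pointwise sublinearity estimate
$$S_\lambda^\beta(f)(x)\leq \sum_{j=1}^\infty|\alpha_j|\,S_\lambda^\beta(a_j)(x),\quad x\in\R_+.$$

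Using that $(a+b)^p\leq a^p+b^p$ for $a,b\geq 0$ and $0<p\leq 1$, together with Proposition \ref{Prop1}, we get
$$\|S_\lambda^\beta(f)\|_{L^p(\R_+)}^p\leq \sum_{j=1}^\infty|\alpha_j|^p\,\|S_\lambda^\beta(a_j)\|_{L^p(\R_+)}^p\leq C\sum_{j=1}^\infty|\alpha_j|^p\leq C\|f\|_{H^p(\R_+)}^p,$$
which is the desired inequality.

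The only delicate point is the justification of the sublinearity inequality above, since the atomic decomposition converges only in $L^2(\R_+)$. This is handled exactly as in the classical case: the $L^2$-boundedness of $S_\lambda^\beta$ established prior to Proposition \ref{Prop1} yields $L^2$-convergence of $t^\beta\partial_t^\beta P_t^\lambda(\cdot)$ applied to the partial sums to the corresponding value for $f$, passing to a subsequence for a.e.\ pointwise convergence, and Fatou's lemma inside the tent integral over $\G_+(x)$ with Minkowski's inequality for the $L^2(\G_+(x),\tfrac{dt\,dy}{t^2})$-norm gives the pointwise domination.
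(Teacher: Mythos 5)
Your proposal is correct and follows essentially the same route as the paper: decompose $f$ into $(p,2)$-atoms via Proposition \ref{Prop L2Hp}, use the $L^2$-boundedness of $S_\lambda^\beta$ to obtain the pointwise domination $S_\lambda^\beta(f)\leq\sum_j|\alpha_j|S_\lambda^\beta(a_j)$, and conclude by $p$-subadditivity together with Proposition \ref{Prop1}. Your extra justification of the sublinearity step (subsequence a.e.\ convergence plus Fatou and Minkowski in the cone) is a sound elaboration of what the paper leaves implicit.
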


    \begin{proof}
        Let $f \in L^2(\R_+) \cap H^p(\R_+)$. According to Proposition~\ref{Prop L2Hp} there exist a sequence
        $\{\alpha_j\}_{j=1}^\infty$ of complex numbers and a sequence $\{a_j\}_{j=1}^\infty$ of $(p,2)$-atoms
        such that
        $\sum_{j=1}^\infty |\alpha_j|^p \leq C \|f\|_{H^p(\R_+)}^p$ and
        $f = \sum_{j=1}^\infty \alpha_j a_j$ in $L^2(\R_+)$. Here $C>0$ does not depend on $f$.
        Since $S_\lambda^\beta$ is bounded from $L^2(\R_+)$ into $L^2(\R_+)$ we deduce that
        $$S_\lambda^\beta(f)
            \leq \sum_{j=1}^\infty |\alpha_j| S_\lambda^\beta(a_j).$$
        Then, Proposition~\ref{Prop1} leads to
        $$\|S_\lambda^\beta(f)\|_{L^p(\R_+)}
                \leq \sum_{j=1}^\infty |\alpha_j|^p \|S_\lambda^\beta(a_j)\|^p_{L^p(\R_+)}
                \leq C \sum_{j=1}^\infty |\alpha_j|^p.$$
        Hence, we get  $\|S_\lambda^\beta(f)\|_{L^p(\R_+)}
                \leq C \|f\|_{H^p(\R_+)}$.
    \end{proof}

    The second step is to recall the following result.

    \begin{Prop}\label{Prop2}
        Let $F$ and $G$ be measurable functions on $\R_+ \times (0,\infty)$. We define, for every $0<p \leq 1$,
        $$\Phi_p(F)
            = \sup_I \left( \frac{1}{|I|^{(2-p)/p}} \int_0^{|I|} \int_I \left| F(y,t) \right|^2 \frac{dy\,dt}{t}\right)^{1/2},$$
        where the supremum is taken over all the intervals $I \subset \R_+$, and
        $$\Psi_p(G)
            = \left( \int_0^\infty \left( \int_{\G_+(x)} \left| G(y,t) \right|^2 \frac{dy\,dt}{t^2} \right)^{p/2} dx \right)^{1/p}.$$
        Then, there exists a constant $C$ such that for all $F$ and $G$,
        $$\int_0^\infty \int_0^\infty \left| F(x,t) \right| \left| G(x,t) \right| \frac{dx\,dt}{t}
            \leq C \Phi_p(F) \Psi_p(G).$$
    \end{Prop}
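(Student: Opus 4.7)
The statement is the classical tent-space pairing inequality of Coifman, Meyer and Stein, transcribed to $\R_+$: $\Phi_p$ is a generalized Carleson quantity, $\Psi_p(G)=\|\mathcal{A}(G)\|_{L^p(\R_+)}$ where $\mathcal{A}(G)(x)=\bigl(\int_{\G_+(x)}|G(y,t)|^2\,t^{-2}\,dy\,dt\bigr)^{1/2}$, and the asserted bound is the $L^1$ pairing underlying tent-space duality. My plan is the standard level-set/sawtooth decomposition. For each $k\in\mathbb{Z}$ introduce the level set $O_k=\{\mathcal{A}(G)>2^k\}$ and its Hardy-Littlewood maximal enlargement $O_k^{*}=\{M\chi_{O_k}>1/2\}$; the weak type $(1,1)$ inequality gives $|O_k^*|\le C|O_k|$, and since $O_k^*$ is open it decomposes as a disjoint countable union $O_k^{*}=\bigsqcup_j I_k^j$ of open intervals.

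Partition $\R_+\times(0,\infty)=\bigsqcup_k S_k$ with $S_k=\Delta_k\setminus\Delta_{k+1}$ and $\Delta_k=\bigcup_{x\in O_k^{*}}\G_+(x)=\{(y,t):B(y,t)\cap O_k^{*}\ne\emptyset\}$ the sawtooth over $O_k^{*}$, and apply Cauchy--Schwarz on each $S_k$. For the $G$-piece: $(y,t)\in S_k$ forces $B(y,t)\subset (O_{k+1}^{*})^c\subset(O_{k+1})^c$, so $\mathcal{A}(G)\le 2^{k+1}$ on $B(y,t)$; writing $t^{-1}\,dy\,dt=(2t^2)^{-1}|B(y,t)|\,dy\,dt$ and unpacking via Fubini (after a second maximal enlargement to localize the $x$'s whose cones can meet $S_k$) yields
\[
\int\!\!\int_{S_k}|G|^2\,\frac{dy\,dt}{t}\le C\,2^{2k}\,|O_k|.
\]
For the $F$-piece: $S_k$ lies in the union of the sawtooths above the components $I_k^j$. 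Absorbing the flare of these sawtooths by a fixed dilation reduces the region to Carleson boxes of the form $I\times(0,|I|)$, on each of which $\Phi_p$ applies; combining with the super-additivity of $s\mapsto s^{(2-p)/p}$ (valid because $(2-p)/p\ge 1$ when $p\le 1$) gives
\[
\int\!\!\int_{S_k}|F|^2\,\frac{dy\,dt}{t}\le C\,\Phi_p(F)^2\sum_j|I_k^j|^{(2-p)/p}\le C\,\Phi_p(F)^2\,|O_k|^{(2-p)/p}.
\]

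Combining the two estimates, and noting $(2-p)/(2p)+1/2=1/p$, gives $\int\!\!\int_{S_k}|F||G|\,t^{-1}\,dy\,dt\le C\,\Phi_p(F)\cdot 2^k|O_k|^{1/p}$. Summing in $k$ and using that $x\mapsto x^{1/p}$ is super-additive on $[0,\infty)$ when $1/p\ge 1$,
\[
\sum_k 2^k|O_k|^{1/p}=\sum_k\bigl(2^{kp}|O_k|\bigr)^{1/p}\le\Bigl(\sum_k 2^{kp}|O_k|\Bigr)^{1/p}\le C\,\|\mathcal{A}(G)\|_{L^p(\R_+)}=C\,\Psi_p(G),
\]
which closes the proof. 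The main technical obstacle is on the $F$-side: the sawtooth over a union of open intervals is \emph{not} literally the disjoint union of Carleson boxes over the components, because the cones flare above each $I_k^j$ as $t$ grows past $|I_k^j|$. The standard remedy is a second application of the weak-type maximal inequality, dilating the $I_k^j$'s by a controlled factor to absorb the flares with a uniformly bounded measure penalty. Once that containment is clean, the remainder is the algebraic pairing of two super-additivity inequalities, one for $(2-p)/p\ge 1$ and one for $1/p\ge 1$, which is exactly why the range $0<p\le 1$ is the natural one.
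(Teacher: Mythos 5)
Your overall strategy --- the Coifman--Meyer--Stein tent-space pairing argument, with level sets $O_k=\{\mathcal{A}(G)>2^k\}$, maximal enlargements $O_k^*$, Cauchy--Schwarz on each layer, and the two super-additivity inequalities for the exponents $(2-p)/p\geq 1$ and $1/p\geq 1$ at the end --- is the right one, and it is essentially what the paper's reference \cite[p.~279]{HSV} carries out (the paper itself gives no argument: it only cites that reference together with \cite[Proposition~4.9]{BCFR1} for the passage from full cones to positive cones). The closing algebra is also correct. The genuine gap is in the geometric decomposition: you layer $\R_+\times(0,\infty)$ by \emph{sawtooths} $\Delta_k=\bigcup_{x\in O_k^*}\G_+(x)=\{(y,t):B(y,t)\cap O_k^*\neq\emptyset\}$, and with this choice both of your layer estimates fail. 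The sawtooth over a bounded open set is unbounded in $t$ (it contains the entire cone $\{(y,t):|y-x_0|<t\}$ over any $x_0\in O_k^*$), so $S_k=\Delta_k\setminus\Delta_{k+1}$ is not contained in any \emph{bounded} dilation of the Carleson boxes over the components of $O_k^*$; indeed, once $O_{k+1}=\emptyset$ the set $S_k$ is the whole sawtooth, and the bound $\iint_{S_k}|F|^2\,t^{-1}\,dy\,dt\leq C\,\Phi_p(F)^2|O_k|^{(2-p)/p}$ is false (take $F=t^{-10}\chi_{\{t>1\}}$, for which $\Phi_p(F)<\infty$ but the left-hand side is bounded below by a positive constant on any full cone, while $|O_k|^{(2-p)/p}$ can be arbitrarily small). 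Likewise, for the $G$-piece the set of $x$ with $\G_+(x)\cap S_k\neq\emptyset$ is all of $\R_+$ whenever $O_k^*\neq\emptyset$ (given any $x$, choose $y=x$ and $t>|x-x_0|$), so no ``second maximal enlargement'' can localize the Fubini integral to a set of measure $\lesssim|O_k|$; your estimate $\iint_{S_k}|G|^2\,t^{-1}\,dy\,dt\leq C2^{2k}|O_k|$ does not follow.

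The repair is to replace sawtooths by \emph{tents}: $T(O_k^*)=\{(y,t):B(y,t)\subset O_k^*\}$, i.e.\ the complement of the union of cones over points \emph{outside} $O_k^*$, and to set $S_k=T(O_k^*)\setminus T(O_{k+1}^*)$. Then (a) since $B(y,t)$ is connected, $T(O_k^*)$ is exactly the disjoint union of the tents over the components $I_k^j$, each contained in the Carleson box $I_k^j\times(0,|I_k^j|)$, so your $F$-estimate follows with no dilation and no flare to absorb; and (b) if $(y,t)\notin T(O_{k+1}^*)$ (with $O^*=\{M\chi_{O}>\gamma\}$ for a fixed $\gamma<1/2$) there is $x_0\in B(y,t)$ with $M\chi_{O_{k+1}}(x_0)\leq\gamma$, hence $|B(y,t)\cap O_{k+1}^c|\geq ct$, and writing $t^{-1}\,dy\,dt\leq c^{-1}|B(y,t)\cap O_{k+1}^c|\,t^{-2}\,dy\,dt$ and applying Fubini restricts the outer integration to $x\in B(y,t)\cap O_{k+1}^c\subset O_k^*\cap O_{k+1}^c$, a set of measure at most $C|O_k|$ on which $\mathcal{A}(G)\leq 2^{k+1}$; this is precisely where the factor $2^{2k}|O_k|$ comes from. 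One also needs $\bigcap_k T(O_k^*)$ to be a null set, which holds because $T(O_k^*)\subset O_k^*\times(0,\infty)$ and $|O_k^*|\to0$, whereas with sawtooths even the covering of $\R_+\times(0,\infty)$ by the layers $S_k$ fails (the full cone over any point of $\bigcap_kO_k^*$ is missed). With tents in place of sawtooths, the rest of your argument goes through verbatim.
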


    \begin{proof}
        This can be proved by using \cite[p. 279]{HSV}. To this end we need to replace the full cone $\G(x)$ by the positive cone $\G_+(x)$.
        The latter can be done  with the technique of the proof of \cite[Proposition~4.9]{BCFR1}. Details are omitted.
    \end{proof}

    The right choice of $F$ and $G$ in the Proposition above is dictated by the following polarization
    equality involving fractional derivatives of the Poisson semigroup.

    \begin{Prop}\label{Prop3}
        Let $\lambda, \beta>0$. Assume that for some $f\in L_{(\lambda\wedge \beta) / 2}$ the estimate \eqref{alphaCarl} holds.
        Let $a$ be a bounded function with compact support in $[0,\infty)$. Then,
        $$\int_0^\infty \int_0^\infty
                t^\beta \partial_t^\beta P_t^\lambda f (x)
       \overline{t^\beta \partial_t^\beta P_t^\lambda a (x)} \,\frac{dx\,dt}{t}
                = \frac{e^{2 \pi i \beta} \G(2\beta)}{2^{2\beta}} \int_0^\infty f(x) \overline{a(x)}\,dx.$$
    \end{Prop}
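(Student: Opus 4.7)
The identity is the polarized version of the Plancherel-type computation for $g_\lambda^\beta$ carried out just above Proposition~\ref{Prop1}. The plan is to prove it first for $f \in L^2(\R_+)$ via Lemma~\ref{Lem hankel} and the isometry of the Hankel transform, then extend it to $f \in L_{(\lambda\wedge\beta)/2}$ satisfying \eqref{alphaCarl} by approximation.

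In the $L^2$ case, $a$ lies in $L^2(\R_+)$ because it is bounded with compact support. By Lemma~\ref{Lem hankel},
$$t^\beta\partial_t^\beta P_t^\lambda f(x) = h_\lambda\bigl(e^{-i\pi\beta}(ty)^\beta e^{-yt}h_\lambda f(y)\bigr)(x),$$
and analogously for $a$. Applying Plancherel for $h_\lambda$ in the $x$-integral, Fubini in $(y,t)$, and the elementary identity $\int_0^\infty t^{2\beta-1}e^{-2yt}\,dt = \Gamma(2\beta)(2y)^{-2\beta}$, I obtain
$$\int_0^\infty\!\!\int_0^\infty t^\beta\partial_t^\beta P_t^\lambda f\,\overline{t^\beta\partial_t^\beta P_t^\lambda a}\,\tfrac{dx\,dt}{t} = c_\beta\!\int_0^\infty h_\lambda f(y)\,\overline{h_\lambda a(y)}\,dy = c_\beta\!\int_0^\infty f(x)\,\overline{a(x)}\,dx,$$
where $c_\beta$ collects $2^{-2\beta}\Gamma(2\beta)$ together with the phase coming from the two copies of $e^{-i\pi\beta}$ appearing in Lemma~\ref{Lem hankel} (one from each factor in the pairing).

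To handle a general $f$, set $f_N := f\chi_{[0,N]} \in L^2(\R_+)$ and apply the identity just proved to $f_N$. The right-hand side $\int_0^\infty f_N\,\overline a\,dx \to \int_0^\infty f\,\overline a\,dx$ by dominated convergence, since $\supp a$ is compact and $f$ is locally integrable. For the left-hand side, Proposition~\ref{Prop2} with $p = 1/(1+\alpha)$ bounds the tail error by
$$C\,\Phi_p\bigl(t^\beta\partial_t^\beta P_t^\lambda(f-f_N)\bigr)\,\|S_\lambda^\beta a\|_{L^p(\R_+)},$$
where the area norm of $a$ is finite by Proposition~\ref{Prop1}, since $a$ is a constant multiple of a $(p,\infty)$-atom.

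The main obstacle is to show $\Phi_p\bigl(t^\beta\partial_t^\beta P_t^\lambda(f-f_N)\bigr)\to 0$ as $N\to\infty$. I would split the supremum defining $\Phi_p$ over intervals $I\subset\R_+$ into two regimes. When $I\subset[0,N/2]$, the data $f-f_N$ is supported in $[N,\infty)$, so the pointwise kernel estimate \eqref{F1} of Lemma~\ref{Lem fractPoiss} yields an $O(N^{-\gamma})$ bound on the Carleson integrand once tested against $f\in L_{(\lambda\wedge\beta)/2}$. When $I$ meets $[N/2,\infty)$, the Carleson mass is controlled by $[d\mu_f]_\alpha$ restricted to a region of small Lebesgue measure in $N$, and dominated convergence forces it to $0$. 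Passing to the limit in the $L^2$ identity for $f_N$ then yields the claimed polarization for general $f$.
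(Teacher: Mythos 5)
Your $L^2$ step is sound and matches the Plancherel computation the paper does for $g_\lambda^\beta$ (modulo pinning down the phase: with the conjugate pairing the two factors $e^{-i\pi\beta}$ from Lemma~\ref{Lem hankel} cancel rather than combine, so you should check your $c_\beta$ against the constant in the statement). The genuine gap is in the final step: the claim that $\Phi_p\bigl(t^\beta\partial_t^\beta P_t^\lambda(f-f_N)\bigr)\to 0$ as $N\to\infty$ is false in general. The quantity $\Phi_p$ is a \emph{supremum} over all intervals $I\subset\R_+$, normalized by $|I|^{1+2\alpha}$; it is the fractional Carleson norm, and a finite Carleson norm does not vanish at infinity (just as a BMO function need not have vanishing mean oscillation far out). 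Concretely, take $f(x)=\sin x$, which lies in $C^\alpha_+\cap L_\rho$ for every $\rho>0$ and satisfies \eqref{alphaCarl}. For an interval $I$ far to the right of $N$, the kernel decay \eqref{F1} shows that $t^\beta\partial_t^\beta P_t^\lambda(f\chi_{(N,\infty)})$ differs from $t^\beta\partial_t^\beta P_t^\lambda f$ on $I\times(0,|I|)$ only by a negligible tail, so the normalized Carleson mass over such $I$ stays bounded below uniformly in $N$. Your first regime ($I\subset[0,N/2]$) does work, but in the second regime "dominated convergence" cannot be applied to a supremum over a family of intervals that is not shrinking, and Proposition~\ref{Prop2} pairs the sup-type norm of $F$ with the integral-type norm of $G$, so it cannot exploit the localization of $a$ to rescue the estimate. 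The product $\Phi_p(F)\Psi_p(G)$ simply does not tend to zero.

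The paper avoids this by never approximating $f$. Instead it truncates the $t$-integral to $[1/N,N]$, uses the symmetry of the kernel, the semigroup property and Lemma~\ref{Lem hankel} to collapse the inner $x$-integral into $\overline{t^{2\beta}\partial_t^{2\beta}P_{2t}^\lambda a(y)}$, and computes the resulting $t$-integral explicitly on the Hankel side, obtaining $\mathcal{I}_N(y)\to \tfrac{e^{2\pi i\beta}\G(2\beta)}{2\beta}a(y)$ a.e.\ along a subsequence. The whole burden is then shifted onto $a$ alone: the key Claim is the uniform bound $\sup_{A>0}\bigl|\int_A^\infty t^{2\beta}\partial_t^{2\beta}P_{2t}^\lambda a(y)\,\tfrac{dt}{t}\bigr|\le C(1+y)^{-\lambda-1}$, proved by a four-case integration by parts in $t$ using \eqref{F0} and \eqref{F0'}. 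Since $\lambda+1\ge 1+\lambda\wedge\beta$, this dominating function is integrable against $|f(y)|$ for $f\in L_{(\lambda\wedge\beta)/2}$, and dominated convergence in $y$ closes the argument. If you want to salvage your scheme, you would have to prove a localized version of Proposition~\ref{Prop2} in which only intervals near $\supp a$ contribute, together with decay of $\Psi_p$-mass of $t^\beta\partial_t^\beta P_t^\lambda a$ away from $\supp a$ — which is essentially re-deriving the paper's Claim in disguise.
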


    The proof of Proposition \ref{Prop3} is quite technical and long, so it is presented in Section \ref{sec:demostraciones}. Now $(iv)\Longrightarrow(i)$ follows readily from Proposition \ref{Prop3}, Proposition
    \ref{Prop2} with $F(x,t)=t^\beta \partial_t^\beta P_t^\lambda f(x)$ and
    $G(x,t)=t^\beta \partial_t^\beta P_t^\lambda g(x)$,
    where $g \in \spann\{(p,\infty)-\text{atoms}\}$, and Corollary \ref{Cor L2Hp}.
\end{proof}
     {\section{The proof of Theorem A}\label{sec:Poisson and fractional}}

    The starting points for proving Theorem \ref{Th1.5} are formulas \eqref{spectral positive} and \eqref{spectral negative}.
    Both are valid for functions in the class $\mathcal{S}_\lambda$.
    If $u\in L_\sigma\cap C_+^\alpha$ and $0<2\sigma<\alpha<1$ then \eqref{spectral positive} holds and, moreover,
    it can be checked that for every $\lambda \geq 1$,
    $$\Delta_\lambda^\sigma u(x)=\int_0^\infty\big(u(x)-u(y))\widetilde{K}_\sigma^\lambda(x,y)\,dy+u(x)\widetilde{B}_\sigma^\lambda(x),$$
    being
    $$\widetilde{K}_\sigma^\lambda(x,y)= \frac{1}{-\G(-2\sigma)} \int_0^\infty P_t^\lambda(x,y) \frac{dt}{t^{1+2\sigma}},$$
    and
    $$\widetilde{B}_\sigma^\lambda(x)
            = \frac{1}{\G(-2\sigma)} \int_0^\infty \big(P_t^\lambda1(x) - 1 \big)\frac{dt}{t^{1+2\sigma}}.$$
    Indeed, by the subordination formula \eqref{subord kernel} and the properties of the Gamma function, it
    is easily seen that $\widetilde{K}_\sigma^\lambda(x,y) = K_\sigma^\lambda(x,y)$ and
    $\widetilde{B}_\sigma^\lambda(x) = B_\sigma^\lambda(x)$. Using Fubini's theorem
    and Corollary \ref{Thm:Calpha}
    we readily get the validity of \eqref{spectral positive}.

    \begin{proof}[Proof of Theorem \ref{Th1.5}]~

    \noindent\textit{(a)} According to \eqref{F0} and \eqref{F0''}, by using the growth of $f$
    and the change of variables $s=\frac{|x-y|}{t}$,
    \begin{align*}
        |&\Delta_\lambda^{-\sigma}f(x)|
        \leq C \int_0^\infty  \int_0^\infty P_t^\lambda(x,y) |f(y)| \,\frac{dy\, dt}{t^{1-2\sigma}} \\
           & \leq C\int_0^\infty\int_0^{2x} \frac{ty^\alpha}{(|x-y|+t)^2}  \frac{dy \,dt}{t^{1-2\sigma}}
          + C\int_0^\infty \int_{2x}^\infty \frac{t(xy)^\lambda y^\alpha}{(|x-y|+t)^{2(\lambda+1)}}
          \frac{dy\,dt}{t^{1-2\sigma}}  \\
        &=  C \int_0^\infty \frac{s^{-2\sigma}}{(s+1)^2}\,ds \int_0^{2x} \frac{y^\alpha}{|x-y|^{1-2\sigma}} \, dy
             + C\int_0^\infty \frac{s^{2\lambda-2\sigma}}{(s+1)^{2\lambda+2}}\,ds\int_{2x}^\infty \frac{x^\lambda
             y^{\alpha+\lambda}}{|x-y|^{2\lambda+1-2\sigma}} \, dy \\
        &\leq C x^{\alpha+2\sigma},
    \end{align*}
    because $0<2\sigma<1$ and $\alpha+2\sigma<\lambda$. Being $\alpha+2\sigma<\tlambda$, it is clear that
    $\Delta_\lambda^{-\sigma}f(x)$, as well as $f$, are in $L_{\widetilde{\lambda}/2}$.
    Then, applying Corollary~\ref{Lem permuta}, estimate \eqref{F1''} to justify Fubini's theorem and the semigroup
    property of $P_t^\lambda$ we get
    $$\partial_t P_t^\lambda(\Delta_\lambda^{-\sigma}f)(x)=\frac{1}{\Gamma(2\sigma)} \int_0^\infty \partial_t P_{t+s}^\lambda f(x) \,\frac{ds}{s^{1-2\sigma}}.$$
    This and Theorem~\ref{Th1.1}\textit{(iii)} for $f$ lead to
    $$\left| t \partial_t P_t^\lambda(\Delta_\lambda^{-\sigma}f)(x) \right|
        \leq C \|f\|_{C_+^\alpha}\int_0^\infty \frac{ts^{2\sigma-1}}{(t+s)^{1-\alpha}}\,ds
       =C \|f\|_{C^\alpha_+} t^{\alpha + 2\sigma},$$
    for all $x \in \R_+$. Hence $\Delta_\lambda^{-\sigma}f \in C^{\alpha+2\sigma}_+$ and
    $\|\Delta_\lambda^{-\sigma}f\|_{C^{\alpha+2\sigma}_+}\leq C \|f\|_{C^\alpha_+}$.

    \noindent\textit{(b)} Assume that $u \in C^\alpha_+\cap L_\sigma$,
    $0<\alpha-2\sigma<1$ and $\lambda \geq 1$.
    Let us first analyze the size of $\Delta_\lambda^\sigma u$.
    Recall \eqref{spectral positive}. We have
    \begin{align*}
        \int_0^\infty \left| P_t^\lambda u(x)-u(x) \right| \frac{dt}{t^{1+2\sigma}}
            = & \int_0^x \left| P_t^\lambda u(x)- u(x) \right| \frac{dt}{t^{1+2\sigma}}
               + \int_x^\infty \left| P_t^\lambda u(x)- u(x) \right| \frac{dt}{t^{1+2\sigma}} \\
            = & I_1(x) + I_2(x).
    \end{align*}
    We deal with $I_1$. Since $u\in C^\alpha_+$, Theorem~\ref{Th1.1} and  {Proposition}
    \ref{thm:harmonic extension} imply
    \begin{align}\label{x3}
        I_1(x)
            \leq & \int_0^x \int_0^t |\partial_s P_s^\lambda u(x)| \,\frac{ds\,dt}{t^{1+2\sigma}}
            \leq C \int_0^x \int_0^t s^{\alpha-1}\, \frac{ds\,dt}{t^{1+2\sigma}}
            \leq C x^{\alpha-2\sigma}.
    \end{align}
    In order to analyze $I_2$, we use \eqref{F0''} to deduce that
    \begin{align*}
        & \int_x^\infty |P_t^\lambda u (x)| \,\frac{dt}{t^{1+2\sigma}}
            \leq C \int_x^\infty \int_0^\infty \frac{t(xy)^\lambda}{(t+|x-y|)^{2(\lambda+1)}} | u (y)|
            \,\frac{dy\,dt}{t^{1+2\sigma}} \\
        & \qquad \leq C \int_x^\infty \left( \int_0^{x/2} \frac{y^\alpha(xy)^\lambda}{(t+x)^{2\lambda+1}} \,dy
                                           + \int_{x/2}^{2x} \frac{x^{2\lambda+\alpha}}{t^{2\lambda+1}} \,dy
                                           + \int_{2x}^\infty \frac{y^\alpha(xy)^\lambda}{(t+y)^{2\lambda+1}}\, dy  \right) \frac{dt}{t^{1+2\sigma}} \leq C x^{\alpha-2\sigma}.
    \end{align*}
    Then, since also $|u(x)|\leq Cx^\alpha$, we get
    \begin{equation}\label{x4}
        I_2(x)
            \leq C x^{\alpha-2\sigma}, \quad x \in \R_+.
    \end{equation}
    By plugging \eqref{x3} and \eqref{x4} into \eqref{spectral positive} it follows that $|\Delta_\lambda^\sigma u(x)|\leq Cx^{\alpha-2\sigma}$,
    for all $x\in\R_+$.

    Our next objective is to see that
    \begin{equation}\label{ob2}
        \left|s \partial_s P_s^\lambda\left(\Delta_\lambda^\sigma u\right)(x)\right|
            \leq Cs^{\alpha-2\sigma}, \quad x \in \R_+,~s>0.
    \end{equation}
    Once \eqref{ob2} is proved, we can apply Theorem~\ref{Th1.1} to conclude.
    By \eqref{F1''} and the size of $\Delta_\lambda^\sigma u(x)$,
    $$\int_0^\infty |\partial_s P_s^\lambda(z,x)|\int_0^\infty \left| P_t^\lambda u (x)- u (x) \right| \frac{dt}{t^{1+2\sigma}}\,dx
        \leq C \int_0^\infty \frac{x^{\alpha-2\sigma}}{s^2+(z-x)^2}\,dx < \infty,$$
for all $z\in\R_+$.  Also, we have that
    $$\int_0^\infty |\partial_s P_s^\lambda(z,x)|\int_0^t \left|\partial_rP_r^\lambda u(x)\right|\, dr\,dx<\infty,$$
    and
    $$\int_0^\infty |\partial_s P_s^\lambda(z,x)| \int_0^\infty \left|\partial_r
 P_r^\lambda(x,y)\right| |u(y)|\,dy\,dx<\infty.$$
    These estimates allow us to interchange the order of integration and write
    \begin{align*}
        \partial_s P_s^\lambda(\Delta_\lambda^\sigma u )(z)
            &=  C \int_0^\infty \partial_s P_s^\lambda[P_t^\lambda u-u](z)\, \frac{dt}{t^{1+2\sigma}} \\
            &=  C \int_0^s \int_0^t \partial_r^2 P_r^\lambda u(z)\big|_{r=s+\eta}\, d\eta\,\frac{dt}{t^{1+2\sigma}} \\
              &\quad  + C \int_s^\infty \left( \partial_r P_r^\lambda u(z)\big|_{r=t+s} -
              \partial_s P_s^\lambda u(z) \right) \frac{dt}{t^{1+2\sigma}}.
    \end{align*}
    By the estimates above and Theorem \ref{Th1.1},
    \begin{align*}
        \left|\int_0^s \int_0^t \partial_r^2 P_r^\lambda u(z)\big|_{r=s+\eta}\, d\eta\,\frac{dt}{t^{1+2\sigma}} \right|
            &\leq C \int_0^s \int_0^t (s+ \eta)^{\alpha-2}\, \frac{d \eta\,dt}{t^{1+2\sigma}} \\
       &    = C \int_0^s s^{\alpha-1} \int_0^{t/s} (1+r)^{\alpha-2} \,\frac{dr\,dt}{t^{1+2\sigma}} \\
            &\leq C s^{\alpha-1} \int_0^s  \frac{t}{s}\,\frac{dt}{t^{1+2\sigma}}
            \leq C s^{\alpha-2\sigma-1},
    \end{align*}
    and
    $$\left| \int_s^\infty \left( \partial_r P_r^\lambda u(z)\big|_{r=t+s} -
              \partial_s P_s^\lambda u(z) \right) \frac{dt}{t^{1+2\sigma}} \right|
        \leq C \int_s^\infty \big[ (t+s)^{\alpha-1} +s^{\alpha-1}  \big]\, \frac{dt}{t^{1+2\sigma}}\leq
 Cs^{\alpha-2\sigma-1}.$$
     Therefore \eqref{ob2} holds and this concludes the proof.
     \end{proof}

    \section{Proofs of technical results} \label{sec:demostraciones}

    Here we collect the proofs of some of the technical results left open in the previous sections.

\begin{proof}[Proof of estimate \eqref{csigma}]
    Observe that for $x\in\R_+$ we can write
    \begin{align*}
        C_\sigma^\lambda(x) &= \int_{|x-y|<1,\,y\in\R_+}\Bigg(K_\sigma^\lambda(x,y)-\frac{c_{\sigma}}{|x-y|^{1+2\sigma}}
        \Bigg)(x-y)\,dy \\
        &\quad+c_{\sigma}\lim_{\varepsilon\to0^+}\int_{\varepsilon<|x-y|<1,\,y\in\R_+}
        \frac{(x-y)}{|x-y|^{1+2\sigma}}\,dy =: I+II,
    \end{align*}
    where $c_\sigma$ is the constant that appears in the kernel for the fractional Laplacian on $\R$,
    namely,
    $c_\sigma=\frac{\sigma\Gamma(1/2+\sigma)}{\pi^{1/2}\Gamma(1-\sigma)}$.
    Let us begin with the first term above. By the definition of $K_\sigma^\lambda(x,y)$ we have
    $$I\leq C\int_0^\infty \chi_{\{|x-y|<1\}}(x,y)|x-y|\int_0^\infty\big|W_t^\lambda(x,y)-\W_t(x-y)\big|\frac{dt}{t^{1+\sigma}}\,dy.$$
    Except for the factor $\chi_{\{|x-y|<1\}}(x,y)|x-y|$, the double
    integral above is exactly like the term $A_1(x)$ in the proof of Lemma
    \ref{Lem:funcion}. In that proof such a term was splitted in two
    integrals $A_{1,1}(x)$ and $A_{1,2}(x)$. Let us call
    $\tilde{A}_{1,1}(x)$ and $\tilde{A}_{1,2}(x)$ the corresponding
    integrals with the extra factor
    $\chi_{\{|x-y|<1\}}(x,y)|x-y|$ in the integrand. We estimate both
    terms using that $|x-y|t^{-1/2}e^{-|x-y|^2/(4t)}\leq
    Ce^{-c|x-y|^2/t}$ in the following way. For the first one,
    \begin{align}
        \tilde{A}_{1,1}(x) &\leq C\int_0^\infty\left(\int_0^{x/2}~+\int_{(0,t/x)\cap(x/2,2x)}~+\int_{2x}^\infty~\right)
        \chi_{\{|x-y|<1\}}(x,y)e^{-c|x-y|^2/t}\,dy\,\frac{dt}{t^{1+\sigma}}\nonumber\\
        &\leq Cx\int_0^\infty e^{-cx^2/t}\frac{dt}{t^{1+\sigma}}+
        C\int_{x/2}^{2x}\int_{xy}^\infty~\frac{dt}{t^{1+\sigma}}\,dy
        +C\chi_{(0,1)}(x)\int_{2x}^{x+1}\int_0^\infty e^{-c|x-y|^2/t}\frac{dt}{t^{1+\sigma}}\,dy\nonumber\\
        &= \frac{Cx}{x^{2\sigma}}+\frac{C}{x^\sigma}\int_{x/2}^{2x}y^{-\sigma}dy+
        C\chi_{(0,1)}(x)\int_{2x}^{x+1}\frac{1}{|x-y|^{2\sigma}}\,dy\nonumber \\
        &\leq C\begin{cases}
        x^{1-2\sigma},&\hbox{when}~1/2<\sigma<1,\\
        1+\chi_{(0,1)}(x)|\ln x|,&\hbox{when}~\sigma=1/2;
        \end{cases}\label{teclado}
    \end{align}
    while for the second one,
    \begin{align*}
        \tilde{A}_{1,2}(x) &\leq C \int_0^\infty \int_{(t/x,\infty)\cap (0,x/2)}\frac{e^{-cx^2/t}}{y^2} \frac{dy\,dt}{t^{\sigma}}
        + C\chi_{(0,1)}(x)\int_0^\infty \int_{2x}^{x+1} e^{-c|x-y|^2/t} \frac{dy\,dt}{t^{\sigma+1}} \\
        &  \quad+ C \int_{x^2}^\infty \int_{(t/x,\infty)\cap (x/2,2x)}\frac{dy\,dt}{y^2t^{\sigma}}+
        C \int_0^{x^2} \int_{(t/x,\infty)\cap (x/2,2x)}\frac{dy\,dt}{x^2t^{\sigma}} \\
        &  \leq C x \int_0^\infty \frac{e^{-cx^2/t}}{t^{\sigma +1}} dt
        + C\chi_{(0,1)}(x)\int_{2x}^{x+1}\int_0^\infty \frac{e^{-c|x-y|^2/t}}{t^{\sigma + 1}} dt\,dy\\
        &\quad + Cx \int_{x^2}^\infty \frac{dt}{t^{\sigma + 1}}
        + \frac{C}{x} \int_0^{x^2} \frac{dt}{t^{\sigma}} \\
        &\leq\frac{Cx}{x^{2\sigma}}+C\chi_{(0,1)}(x)\int_{2x}^{x+1}\frac{1}{|x-y|^{2\sigma}}\,dy+\frac{Cx}{x^{2\sigma}}
        +\frac{Cx^{-2\sigma+2}}{x},
    \end{align*}
    which is bounded by \eqref{teclado}.
    This concludes the estimates for $I$. Notice that $II$ is zero when $x\geq1$.
    Using again this cancellation, when $x<1$ we get
$$ II= c_\sigma\int_{2x}^{x+1}\frac{(x-y)}{|x-y|^{1+2\sigma}}\,dy=c_\sigma\int_{2x}^{x+1}(y-x)^{-2\sigma}\,dy, $$
that is bounded by \eqref{teclado}.  Therefore \eqref{csigma} is true.
\end{proof}

 \begin{proof}[Proof of Lemma \ref{Lem intPoiss}]
        Let $m \in \N$ such that $m-1 \leq \beta < m$. By using subordination formula \eqref{subord kernel}
         we write, for $x,y\in\R_+$ and $t>0$,
        \begin{align*}
            \partial_t^m P_t^\lambda(x,y)
                &=  \partial_t^{m-1} \left( \frac{t}{\sqrt{2\pi}} \int_0^\infty \frac{e^{-u}}{u^{3/2}}\,
                \partial_v\left(W^\lambda_v(x,y) \right)\big|_{v=t^2/(4u)} \,du\right) \\
                &=   \partial_t^{m-1} \left( \frac{1}{\sqrt{\pi}} \int_0^\infty \frac{e^{-t^2/(4v)}}{\sqrt{v}} \,\partial_v W^\lambda_v(x,y) \,dv\right) \\
                &=  \frac{1}{\sqrt{\pi}} \int_0^\infty \partial_t^{m-1} \big(   e^{-t^2/(4v)}\big) \frac{1}{\sqrt{v}}\, \partial_v W^\lambda_v(x,y)\, dv.
        \end{align*}
        It is known that the Hermite polynomial of degree $k\geq0$ is given by
        $H_k(r)=(-1)^ke^{r^2}\partial_r^k\big(e^{-r^2}\big)$, for $r\in\R$. So
        $$\partial_t^m P_t^\lambda(x,y)
            = \frac{1}{\sqrt{\pi}} \Big(-\frac{1}{2} \Big)^{m-1} \int_0^\infty H_{m-1}\left(\frac{t}{2\sqrt{v}}\right)e^{-t^2/(4v)}
        \frac{1}{v^{m/2}}\,\partial_vW_v^\lambda(x,y)\,dv.$$
Thus, by using \eqref{F1'} and \eqref{F1''} to interchange the order of the integrals,
\begin{align*}
               \Big|\int_0^\infty &\partial_t^\beta P_t^\lambda(x,y)\, dy\Big|
                =  C\Big| \int_0^\infty \int_0^\infty
                        \partial_t^m P_{t+s}^\lambda(x,y) s^{m-\beta-1}\, ds \,dy \Big| \\
               & \leq  C
            \int_0^\infty \int_0^\infty\left| H_{m-1}\left(\frac{t+s}{2\sqrt{v}}\right)\right|e^{-(t+s)^2/(4v)} \frac{1}{v^{m/2}}\,
            \left|\int_0^\infty\partial_v W^\lambda_v(x,y)\,dy \right| s^{m-\beta-1} \,dv \,ds\\
         & \leq C \int_0^\infty \left(\int_0^\infty  \frac{s^{m-\beta-1}}{v^{m/2}} \,e^{-s^2/
         (8v)}\, ds \right) e^{-t^2/(8v)}
                        \left| \int_0^\infty \partial_v W^\lambda_v(x,y) \,dy \right| dv \\
               & \leq  C \int_0^\infty \frac{e^{-t^2/(8v)}}{v^{\beta/2}}\left| \int_0^\infty \partial_v W^\lambda_v(x,y) \,dy \right| dv,\quad x \in \R_+~\hbox{and}~t>0.
               \end{align*}
        By proceeding in a complete analogous way as in the proof of \cite[$(i) \Rightarrow (ii)$, Theorem 1.1, pp. 470--474]{BCFR1} we  can deduce that
        the integral involving the derivative of the heat kernel
        is bounded by
        \begin{align*}
           &  \left| \left( \int_0^{v/x}~+\int_{\min\{v/x,x/2\}}^{x/2}~ + \int_{\max\{v/x,x/2\}}^{\max\{v/x,3x/2\}}~
                        + \int_{\max\{v/x,3x/2\}}^\infty~ \right) \partial_v W^\lambda_v(x,y)\, dy \right| \nonumber\\
           & \quad  \leq  \int_0^{v/x} \left| \partial_v W^\lambda_v(x,y) \right|dy
                     +  \int_{\min\{v/x,x/2\}}^{x/2} \left| \partial_v W^\lambda_v(x,y) \right|dy
                     +  \int_{\max\{v/x,3x/2\}}^\infty \left| \partial_v W^\lambda_v(x,y) \right|dy \nonumber \\
           & \qquad +  \int_{(x/2,3x/2)\cap(v/x,\infty)} \left| \partial_v\left( W^\lambda_v(x,y) - \W_v(x-y) \right) \right|\,dy\nonumber\\
           &\qquad       +  \left| \partial_v \left( \int_{(x/2,3x/2)\cap(v/x,\infty)} \W_v(x-y)\, dy \right) \right| \nonumber\\
           & \qquad \leq C \left( \frac{e^{-x^2/8v}}{v} + \frac{\chi_{(0,3x^2/2)}(v)}{x^2}\right),
           \quad x,v \in \R_+.
        \end{align*}
        Hence, we get, for $x \in \R_+$ and $t>0$,
        \begin{align*}
            \left| \int_0^\infty \partial_t^\beta P_t^\lambda(x,y) \,dy \right|
                &\leq  C \left( \int_0^\infty \frac{e^{-(x^2+t^2)/(8v)}}{v^{(\beta+2)/2}} \,dv
                            + \frac{1}{x^2} \int_0^{3x^2/2} \frac{e^{-t^2/(8v)}}{v^{\beta/2}} \,dv    \right) \\
                &\leq  C \left( \frac{1}{(x+t)^\beta}
                            + \frac{1}{x^2} \int_0^{3x^2/2} \frac{e^{-t^2/(8v)}}{v^{(\beta+2-\delta)/2}} \,dv    \right)
                \leq  C t^{-\beta} \left(\frac{t}{x}\right)^\delta.
        \end{align*}
    \end{proof}

    \begin{proof}[Proof of Lemma \ref{Lem hankel}]
        Suppose that $ f  \in C_c(\R_+)$, the space of continuous functions with compact support on $\R_+$.
        For any $x\in\R_+$ and $t>0$ we have that $ P_t^\lambda f (x)= h_\lambda( e^{-yt} h_\lambda f (y))(x)$.
        Let $b>0$ such that $\supp f \subset (0,b)$. Since the function $\sqrt{z}J_\nu(z)$ is bounded on $\R_+$,
        for every $\nu>-1/2$, we deduce that for any $k\in\mathbb{N}$,
        \begin{align*}
            \int_0^\infty \left| \partial_t^k(e^{-yt}) \right| \left| h_\lambda f (y) \right|  \left| \sqrt{xy} J_{\lambda-1/2}(xy) \right| dy
                \leq C \frac{ b k!}{t^{k+1}} \| f \|_\infty.
        \end{align*}
        Then,  $\partial_t^k P_t^\lambda f (x)
            = h_\lambda ( (-1)^k y^k e^{-yt} h_\lambda f (y) )(x)$.
        Moreover, if $m \in \N$ and $m-1 \leq \beta < m$, we get
         \begin{align*}
            \int_0^\infty \int_0^\infty  \left| \sqrt{xy} J_{\lambda-1/2}(xy) \right|  y^m e^{-y(t+s)} \left| h_\lambda f (y)\right|   dy\, s^{m-\beta-1}\,ds
                \leq & C \frac{ b}{t^{\beta+1}} \|f\|_\infty,
        \end{align*}
        and then we can also write $\partial_t^\beta P_t^\lambda f(x)
                = h_\lambda (e^{-i \pi \beta} y^\beta e^{-yt}  h_\lambda f(y))(x)$.

        Let us now consider the general case $f \in L^2(\R_+)$. By using Corollary~\ref{Lem permuta} and \eqref{F1''} we obtain
        \begin{align*}
            \left| \partial_t^\beta P_t^\lambda f (x) \right|
                \leq & \left( \int_0^\infty \frac{1}{(t+|x-y|)^{2(\beta+1)}} \,dy\right)^{1/2} \|f\|_{L^2(\R_+)}
                \leq \frac{C}{t^{\beta+1/2}} \|f\|_{L^2(\R_+)}.
        \end{align*}
        Moreover, since the function $\sqrt{z}J_\nu(z)$ is bounded on $\R_+$,
        for every $\nu>-1/2$, Plancherel equality for the Hankel transformation (see \cite[(3)]{Ze}) leads to
        \begin{align*}
            & \left| h_\lambda\big( (-1)^\beta y^\beta e^{-ty} h_\lambda f(y) \big)(x) \right|
                \leq C \int_0^\infty y^\beta e^{-ty} |h_\lambda f (y)|\, dy \\
            & \qquad \leq C \left( \int_0^\infty y^{2\beta} e^{-2ty}\, dy \right)^{1/2} \|f\|_{L^2(\R_+)}
                     \leq \frac{C}{t^{\beta+1/2}} \|f\|_{L^2(\R_+)}.
        \end{align*}
        Hence, the operators $f\longmapsto\partial_t^\beta P_t^\lambda f$ and $f\longmapsto h_\lambda\big( (-1)^\beta y^\beta e^{-ty} h_\lambda f(y)\big)$, which are defined for every $t>0$,
                are bounded from $L^2(\R_+)$ into $L^\infty(\R_+)$. Since both operators coincide
                over the class  $C_c(\R_+)$ for every $t>0$, we conclude the result for a general $f \in L^2(\R_+)$
                by density.
   \end{proof}

\begin{proof}[Proof of Proposition \ref{Prop3}]
        According to Propositions~\ref{Prop1} and \ref{Prop2} we have that the following
        integral $\mathcal{I}$ is absolutely convergent and therefore we can write
        \begin{align}
            \mathcal{I}&= \int_0^\infty \int_0^\infty
                t^\beta \partial_t^\beta P_t^\lambda f(x) \overline{t^\beta \partial_t^\beta P_t^\lambda a(x)}\, \frac{dx\,dt}{t}
            = \lim_{N \to \infty} \int_{1/N}^N \int_0^\infty
                t^\beta \partial_t^\beta P_t^\lambda f(x) \overline{t^\beta \partial_t^\beta P_t^\lambda a(x)}\, \frac{dx\,dt}{t}\label{P1}.
        \end{align}
        The inner integral above can be written as
        \begin{align}\label{F3}
            \int_0^\infty t^\beta \partial_t^\beta P_t^\lambda  f(x)\overline{ t^\beta \partial_t^\beta P_t^\lambda a(x)}\, dx
                = \int_0^\infty f(y) \int_0^\infty t^\beta \partial_t^\beta P_t^\lambda(x,y)\overline{ t^\beta \partial_t^\beta P_t^\lambda a(x)}\, dx\, dy,
        \end{align}
        We need to justify the interchange of the order of integration in \eqref{F3}. We denote by $I$ a bounded interval in
        $\R_+$ such that $\supp a \subset I$. Notice that if $0<x<x_I+|I|$, where $x_I$ denotes the center of $I$, then
        $$\int_I \frac{1}{(t+|x-z|)^{\beta+1}}\,dz
            \leq \frac{C}{t^{\beta+1}}
            \leq \frac{C}{t^{\beta+1}(1+x)^{\beta+1}}.$$
        Moreover, if $x \geq x_I + |I|$ and $z \in I$, $|x-z| \sim |x-x_I|$. Hence, if  $x \geq x_I + |I|$ ,
        $$\int_I \frac{1}{(t+|x-z|)^{\beta+1}}\,dz
            \leq \frac{C}{|x-x_I|^{\beta+1}}
            \leq \frac{C}{(1+x)^{\beta+1}}.$$
Therefore, by \eqref{F1''} we obtain,
\begin{equation}\label{F4}
       \left| t^\beta \partial_t^\beta P_t^\lambda a(x) \right|
            \leq  \int_I \frac{t^\beta\|a\|_\infty}{(t+|x-z|)^{\beta+1}}\,dz\leq \frac{\zeta(t)}{(1+x)^{\beta+1}},
            \end{equation}
            where $\zeta(t)=C\max\{t^{-\beta-1},1\}$, and $C$ is a constant that depends only on $I$.
        By using again \eqref{F1''}, \eqref{F4} leads to
        \begin{align*}
            &\int_0^\infty |f(y)| \int_0^\infty
                    \left| t^\beta \partial_t^\beta P_t^\lambda(x,y) \right| \left| t^\beta \partial_t^\beta P_t^\lambda a(x) \right| \,dx\, dy \\
            & \leq \tilde{\zeta}(t) \int_0^\infty |f(y)| \left(\int_0^{y/2}~ + \int_{y/2}^\infty~\right)
            \frac{t^\beta }{\big[(t+|x-y|)(1+x)\big]^{\beta+1}}\,  dx \,dy \\
            & \leq \tilde{\zeta}(t) \Bigg( \int_0^\infty \frac{|f(y)|}{\left[\min\{t,1\}(1+y)\right]^{\beta+1}}
            \,dy \int_0^\infty \frac{dx}{(1+x)^{\beta+1}}
                                           + \int_0^\infty \frac{|f(y)|}{(1+y)^{\beta+1}}\, dy \int_\R \frac{t^\beta}{(t+z)^{\beta+1}}  \,dz    \Bigg)\\
            &  \leq \tilde{\zeta}(t) \int_0^\infty \frac{|f(y)|}{(1+y)^{\beta+1}} \,dy.
        \end{align*}
        Here $\tilde{\zeta}(t)$ is a positive and continuous function of $t>0$. This proves that
        the interchange in the order of integration in \eqref{F3} is
        justified and that the integral in the right-hand side is absolutely convergent
        and also we can write, for every $N \in \N$,
        \begin{multline}\label{P3}
            \int_{1/N}^{N} \int_0^\infty t^\beta \partial_t^\beta P_t^\lambda f(x) \overline{t^\beta \partial_t^\beta P_t^\lambda a(x)}\, \frac{dx\, dt}{t}\\
                = \int_0^\infty f(y) \int_{1/N}^{N} \int_0^\infty t^\beta \partial_t^\beta P_t^\lambda(x,y)
                \overline{t^\beta \partial_t^\beta P_t^\lambda a(x)}\, \frac{dx \,dt}{t}\,dy=:
                 \int_0^\infty f(y)\, \mathcal{I}_N(y)\,dy.
        \end{multline}
        It is clear that by Lemma~\ref{Lem intPoiss} we have that
        \begin{equation*}\label{A2}
            \left| t^\beta \partial_t^\beta P_t^\lambda  a (x) \right|
                \leq C \left(\frac{t}{x}\right)^\delta,
        \end{equation*}
        for some $0<\delta<\beta \wedge 1/2$.  Then, Corollary \ref{Lem permuta} and Lemma \ref{Lem hankel} imply that
        \begin{align}\label{P5}
           \int_0^\infty &t^\beta \partial_t^\beta P_t^\lambda(x,y) \overline{t^\beta \partial_t^\beta P_t^\lambda a(x)}\,dx
                = \left[ s^\beta \partial_s^\beta P_s^\lambda\left(\overline{ t^\beta \partial_t^\beta P_t^\lambda a} \right)(y)\right]\Bigg|_{s=t} \nonumber \\
           &    =   \left[ s^\beta t^\beta h_\lambda \left(e^{-2\pi i \beta} z^{2\beta} e^{-(s+t)z} \overline{h_\lambda a} \right)(y)\right]\Bigg|_{s=t}
                =  \overline{t^{2\beta}\partial_t^{2\beta} P_{2t}^\lambda a(y)}.
        \end{align}
        Again, by applying Lemma~\ref{Lem hankel} and interchanging  the order of integration we get,
        for every $t,y \in \R_+$ and $N \in \N$,
        \begin{align*}
            \mathcal{I}_N(y)= h_\lambda \left[e^{-2\pi i \beta} \left(\int^{N}_{1/N} (tz)^{2\beta-1}e^{-2tz}z\,dt \right) h_\lambda a(z) \right](y).
        \end{align*}
        Since
        $$ \left(\int^{N}_{1/N} (tz)^{2\beta-1}e^{-2tz}z\,dt \right) h_\lambda a(z)
            \longrightarrow \frac{\G(2\beta)}{2\beta} h_\lambda(a)(z), \quad
           \hbox{as}~N \to \infty,~\hbox{in}~L^2(\R_+),$$
        we get
        $\mathcal{I}_N(y)  \longrightarrow \frac{e^{2\pi i \beta}\G(2\beta)}{2\beta}a(y)$, as $N \to \infty$,  in $L^2(\R_+)$. Then, there exists an increasing sequence $(N_k)_{k \in \N} \subset \N$ such that         \begin{equation}\label{P4}
            \mathcal{I}_{N_k}(y) \longrightarrow \frac{e^{2\pi i \beta}\G(2\beta)}{2\beta}a(y), \quad \text{as } k \to \infty, \text{ a.e. } y \in \R_+.
        \end{equation}

        \medskip

\noindent \textsc{Claim.} \textit{For all $y\in\R_+$ we have}
          $$ \sup_{A>0} \left| \int_A^\infty t^{2\beta} \partial_t^{2\beta} P_{2t}^\lambda a(y) \,\frac{dt}{t} \right|
                \leq \frac{C}{(1+y)^{\lambda+1}}.$$

\begin{proof}[Proof of the Claim.]
        For the proof we first note that
        \begin{align*}
            \int_A^\infty t^{2\beta} \partial_t^{2\beta} P_{2t}^\lambda a(y)\, \frac{dt}{t}
                = & \int_A^\infty t^{2\beta-1} \int_I \partial_t^{2\beta} P_{2t}^\lambda(y,z) a(z) \,dz \,dt \\
                = & \int_I  a(z) \int_A^\infty t^{2\beta-1} \partial_t^{2\beta} P_{2t}^\lambda(y,z) \,dt\,dz.
        \end{align*}
        The interchange in the order of integration is justified by \eqref{F1}.
        To get the estimate in the claim we need to show that
        \begin{equation}\label{H1}
            \sup_{A>0} \left| \int_I a(z)  \int_A^\infty t^{2\beta-1} \partial_t^{2\beta} P_{2t}^\lambda(y,z) \,dt\, dz\right|
                \leq \frac{C}{(1+y)^{\lambda+1}}.
        \end{equation}
        In order to do this we have to distinguish four cases. This technique was also used
        in the proof of Lemma 5.1 in \cite{MSTZ1}.

        \smallskip

        \noindent \textbf{Case 1.} $2\beta<1$.
        Straightforward manipulations lead to
        \begin{align*}
            \int_A^\infty t^{2\beta-1} \partial_t^{2\beta} P_{2t}^\lambda(y,z) \,dt
                &=  2^{\lfloor 2\beta \rfloor-2\beta+1} \int_{2A}^\infty t^{2\beta-1} \partial_t^{2\beta} P_{t}^\lambda(y,z) \,dt \\
                &=  \frac{2^{\lfloor 2\beta \rfloor-2\beta+1}e^{-i\pi(1-\beta)}}{\G(1-\beta)}
                    \int_{2A}^\infty t^{2\beta-1} \int_0^\infty \partial_t P_{t+s}^\lambda(y,z) s^{-2\beta}\, ds\, dt \\
                &=  \frac{2^{\lfloor 2\beta \rfloor-2\beta+1}e^{-i\pi(1-\beta)}}{\G(1-\beta)}
                    \int_{2A}^\infty t^{2\beta-1} \int_t^\infty \partial_u P_{u}^\lambda(y,z) (u-t)^{-2\beta}\, du\, dt.
        \end{align*}
        Here $\lfloor \gamma \rfloor$ represents the biggest integer less than $\gamma$.
        By \eqref{F1} we get
        \begin{align*}
           & \int_{2A}^\infty t^{2\beta-1} \int_t^\infty \left|\partial_u P_{u}^\lambda(y,z)\right| (u-t)^{-2\beta} \,du\, dt
                \leq  C \int_{2A}^\infty t^{2\beta-1} \int_t^\infty u^{-\tlambda -1} (u-t)^{-2\beta}\, du \,dt \\
           & \qquad \qquad \leq  C \int_{2A}^\infty t^{-\tlambda-1} dt \int_1^\infty v^{-\tlambda -1} (v-1)^{-2\beta} \,dv
           < \infty.
        \end{align*}
        Then, integrating by parts,
        \begin{align*}
           \int_A^\infty t^{2\beta-1} \partial_t^{2\beta} P_{2t}^\lambda(y,z) \,dt
           &     =  \frac{2^{\lfloor 2\beta \rfloor-2\beta+1}e^{-i\pi(1-\beta)}}{\G(1-\beta)}
                    \int_{2A}^\infty t^{2\beta-1} \int_t^\infty \partial_u P_{u}^\lambda(y,z) (u-t)^{-2\beta} \,du \,dt \\
           &    = \frac{2^{\lfloor 2\beta \rfloor-2\beta+1}e^{-i\pi(1-\beta)}}{\G(1-\beta)}
                    \int_{2A}^\infty \partial_u P_{u}^\lambda(y,z) \int_{2A/u}^1 \frac{s^{2\beta-1}}{(1-s)^{2\beta}}\, ds \,du \\
           &      = \frac{2^{\lfloor 2\beta \rfloor-2\beta+1}e^{i\pi\beta}}{\G(1-\beta)}
                    \int_{2A}^\infty  P_{u}^\lambda(y,z) \left( \frac{ 2A }{u-2A} \right)^{2\beta}\,\frac{ du}{u} \\
           &      = \frac{2^{\lfloor 2\beta \rfloor-2\beta+1}e^{i\pi\beta}}{\G(1-\beta)}
                    \left( \int_{2A}^{3A+|y-z|} + \int_{3A+|y-z|}^\infty \right) P_{u}^\lambda(y,z)
                    \left( \frac{ 2A }{u-2A} \right)^{2\beta}\,\frac{ du}{u} \\
           &         = J_1(A,y,z) +  J_2(A,y,z).
        \end{align*}

        According to \eqref{F0} we deduce that, for each $A,y,z \in \R_+$,
        \begin{align*}
            \left| J_1(A,y,z) \right|
                \leq & C \int_{2A}^{3A+|y-z|} \left( \frac{ 2A }{u-2A} \right)^{2\beta}
                    \frac{u}{(u+|y-z|)^2} \left( \frac{y \wedge z}{u+|y-z|} \wedge 1 \right)^\lambda \frac{ du}{u} \\
                \leq & C \frac{A^{2\beta}}{(A+|y-z|)^2}  \left( \frac{y \wedge z}{A+|y-z|} \wedge 1 \right)^\lambda
                    \int_{2A}^{3A+|y-z|}  \frac{ du }{(u-2A)^{2\beta}} \\
                \leq & C \frac{A^{2\beta}}{(A+|y-z|)^{2\beta+1}}  \left( \frac{y \wedge z}{A+|y-z|} \wedge 1 \right)^\lambda,
        \end{align*}
        and also
        \begin{align*}
            \left| J_2(A,y,z) \right|
                \leq & C \int_{3A+|y-z|}^\infty \left( \frac{ 2A }{u-2A} \right)^{2\beta}
                    \frac{u}{(u+|y-z|)^2} \left( \frac{y \wedge z}{u+|y-z|} \wedge 1 \right)^\lambda \frac{ du}{u} \\
                \leq & C \frac{A^{2\beta}}{(A+|y-z|)^{2\beta}}  \left( \frac{y \wedge z}{A+|y-z|} \wedge 1 \right)^\lambda
                    \int_{3A+|y-z|}^\infty  \frac{ du }{(u+|z-y|)^{2}} \\
                \leq & C \frac{A^{2\beta}}{(A+|y-z|)^{2\beta+1}}  \left( \frac{y \wedge z}{A+|y-z|} \wedge 1 \right)^\lambda.
        \end{align*}
        Hence,
        \begin{equation}\label{B1}
            \left| \int_A^\infty t^{2\beta-1} \partial_t^{2\beta} P_{2t}^\lambda(y,z) \,dt \right|
                \leq C \frac{A^{2\beta}}{(A+|y-z|)^{2\beta+1}}  \left( \frac{y \wedge z}{A+|y-z|} \wedge 1 \right)^\lambda,
        \end{equation}
        which gives \eqref{H1}.

        \smallskip

\noindent \textbf{Case 2.} $\beta=1/2$. By \eqref{F0},
        \begin{equation}\label{B2}
            \left| \int_A^\infty t^{2\beta-1} \partial_t^{2\beta} P_{2t}^\lambda(y,z)\, dt \right|
                =  C P_{2A}^\lambda(y,z)
                \leq  C \frac{A}{(A+|y-z|)^{2}}  \left( \frac{y \wedge z}{A+|y-z|} \wedge 1 \right)^\lambda.
        \end{equation}

        \smallskip

\noindent \textbf{Case 3.} $\beta=k/2$, for $k \in \N$, $k \geq 2$.  Integration by parts leads to
        \begin{align*}
            & \int_A^\infty t^{2\beta-1} \partial_t^{2\beta} P_{2t}^\lambda(y,z) \,dt
                = 2 \int_{2A}^\infty t^{k-1} \partial_t^{k} P_{t}^\lambda(y,z)\, dt
                = \sum_{j=0}^{k-1} C_j A^j \partial_t^j P_t^\lambda(y,z) \big|_{t=2A},
        \end{align*}
        for certain $C_j \in \R$ , $j=0, \dots, k-1$. By using \eqref{F0'} we get
        $$\left| \int_A^\infty t^{2\beta-1} \partial_t^{2\beta} P_{2t}^\lambda(y,z)\, dt \right|
            \leq C \sum_{j=0}^{k-1} \frac{A^j}{(A+|y-z|)^{\beta+1}} \left( \frac{y \wedge z}{A+|y-z|} \wedge 1 \right)^\lambda. $$

            \smallskip

 \noindent \textbf{Case 4.  $k-1<2\beta<k$, for $k \in \N$, $k \geq 2$.} Integrating by parts again we obtain that
        \begin{align*}
           \int_A^\infty t^{2\beta-1} \partial_t^{2\beta} P_{2t}^\lambda(y,z) \,dt
             = &  \frac{2^{k-2\beta}e^{-i\pi(k-\beta)}}{\G(k-\beta)}
                    \int_{2A}^\infty  \partial_u^k P_{u}^\lambda(y,z) \int_{2A/u}^1 \frac{u^{k-1}s^{2\beta-1}}{(1-s)^{2\beta+1-k}}\, ds\, du  \\
&              =  \sum_{j=0}^{k-1} C_j \int_{2A}^\infty u^{j-k}
\partial_u^j P_{u}^\lambda(y,z)
\frac{A^{2\beta}}{(u-2A)^{2\beta+1-k}} \,du,
        \end{align*}
        where $C_j \in \R$ , $j=0, \dots, k-1$. Suppose that $j=1, \dots, k-1$. By using \eqref{F0'} we deduce that
        \begin{align*}
            & \left| \int_{2A}^\infty u^{j-k} \partial_u^j P_{u}^\lambda(y,z) \frac{A^{2\beta}}{(u-2A)^{2\beta+1-k}} \,du \right|
                 \leq \left| \int_{2A}^{3A} u^{j-k} \partial_u^j P_{u}^\lambda(y,z) \frac{A^{2\beta}}{(u-2A)^{2\beta+1-k}}\, du \right| \\
            & \qquad \qquad \qquad + \left| \int_{3A}^\infty u^{j-k} \partial_u^j P_{u}^\lambda(y,z) \frac{A^{2\beta}}{(u-2A)^{2\beta+1-k}}\, du \right| \\
            & \qquad \qquad \leq C \Bigg[ \frac{A^{2\beta+j-k}}{(A+|y-z|)^{j+1}}
            \left( \frac{y \wedge z}{A+|y-z|} \wedge 1 \right)^\lambda \int_{2A}^{3A} \frac{du}{(u-2A)^{2\beta+1-k}}  \\
            & \qquad \qquad \qquad + \frac{A^{2\beta}}{(A+|y-z|)^{j+1}} \left( \frac{y \wedge z}{A+|y-z|}
            \wedge 1 \right)^\lambda \int_{3A}^\infty \frac{du}{u^{k-j}(u-2A)^{2\beta+1-k}}  \Bigg] \\
            & \qquad \qquad \leq C  \frac{A^{j}}{(A+|y-z|)^{j+1}} \left( \frac{y \wedge z}{A+|y-z|} \wedge 1 \right)^\lambda.
        \end{align*}
        Also, by \eqref{F0} we get
        \begin{align*}
            & \left| \int_{2A}^\infty u^{-k} P_{u}^\lambda(y,z) \frac{A^{2\beta}}{(u-2A)^{2\beta+1-k}} \,du \right|\\
            &\qquad \qquad
                 \leq C \frac{A^{2\beta}}{(A+|y-z|)^{2}}
                 \left( \frac{y \wedge z}{A+|y-z|} \wedge 1 \right)^\lambda \int_{2A}^\infty \frac{u^{1-k}}{(u-2A)^{2\beta+1-k}} \,du \\
            & \qquad \qquad \leq C  \frac{A}{(A+|y-z|)^{2}} \left( \frac{y \wedge z}{A+|y-z|} \wedge 1 \right)^\lambda.
        \end{align*}
        Hence, we obtain that
        \begin{equation}\label{B3}
             \left| \int_A^\infty t^{2\beta-1} \partial_t^{2\beta} P_{2t}^\lambda(y,z)\,dt  \right|
                \leq C \frac{A}{(A+|y-z|)^{2}} \left( \frac{y \wedge z}{A+|y-z|} \wedge 1 \right)^\lambda.
        \end{equation}
        Let $\delta \in (0,1]$ and suppose that $I \subset (0,c]$. We can write
        \begin{align}\label{laststep}
            & \int_I |a(z)| \frac{A^\delta}{(A+|y-z|)^{\delta+1}} \left( \frac{y \wedge z}{A+|y-z|} \wedge 1 \right)^\lambda dz\\
      &     \qquad \qquad     \leq \|a\|_\infty  \int_I \frac{A^\delta}{(A+|y-z|)^{\delta+1}} \left( \frac{y \wedge z}{A+|y-z|} \wedge 1 \right)^\lambda dz \nonumber \\
            & \qquad \qquad \leq C \left( A^\delta \chi_{(0,2c)}(y) \int_0^c \frac{1}{(A+|y-z|)^{\delta+1}}\, dz
                                            + \chi_{(2c,\infty)}(y) \int_0^c \frac{z^\lambda}{(A+|y-z|)^{\lambda+1}} \,dz\right) \nonumber\\
            & \qquad \qquad \leq C \left( A^\delta \chi_{(0,2c)}(y) \int_0^\infty \frac{du}{(A+u)^{\delta+1}}
                                            + \frac{\chi_{(2c,\infty)}(y)}{y^{\lambda+1}} \right) \nonumber\\
            & \qquad \qquad \leq C \left( \chi_{(0,2c)}(y) + \frac{\chi_{(2c,\infty)}(y)}{y^{\lambda+1}} \right)
                \leq \frac{C}{(1+y)^{\lambda+1}}.\nonumber
        \end{align}
        From \eqref{B1}, \eqref{B2}, \eqref{B3} and \eqref{laststep} we deduce \eqref{H1}. Hence the
        claim is proved.
\end{proof}

        Finally, by taking into account \eqref{P1}, \eqref{F3}, \eqref{P3}, \eqref{P5},
        \eqref{P4} and applying the dominated convergence theorem, Proposition \ref{Prop3} follows.
    \end{proof}
    
\medskip

\noindent{\textbf{Acknowledgements.} We are grateful to the referees for comments that helped us
to improve the presentation of the paper.}



\end{document}